\newcommand{\Rz}{\mathbb{R}}
\newcommand{\Nz}{\mathbb{N}}
\newcommand{\ove}{\overline}
\newcommand{\Y}{{\cal Y}^\nu}
\newcommand{\D}{{\cal D}}
\newcommand{\F}{{\cal F}}
\newcommand{\J}{{\cal J}}
\newcommand{\I}{{\cal I}}
\newcommand{\A}{{\cal A}_\nu}
\newcommand{\G}{{\cal G}}
\newcommand{\U}{{\cal U}}
\renewcommand{\S}{{\cal S}}
\newcommand{\C}{{\mathbb C}}
\newcommand{\Rzn}{\Rz^{3 \times 3}_{\,\text{\rm sym}}}
\newcommand{\Rzd}{\Rz^{3 \times 3}_{\, \text{\rm dev}}}
\newcommand{\GammaD}{\Gamma_{\text{\rm Dir}}}
\newcommand{\GammaT}{\Gamma_{\text{\rm tr}}}
\newcommand{\uD}{u^{ \text{\rm Dir}}}
\newcommand{\uDi}{u^{ \text{\rm Dir}, i}}
\newcommand{\Diss}{\text{\rm Diss}}
\newcommand{\Var}{\text{\rm Var}}
\newcommand{\tr}{\text{tr}}
\newcommand{\CC}{{\cal C}}
\renewcommand{\H}{{\cal H}}
\newcommand{\W}{{\cal W}}
\newcommand{\epsi}{\varepsilon}
\newcommand{\lan}{\langle}
\newcommand{\ran}{\rangle}
\newcommand{\argmin}{\mathop{\mathrm{Arg\,Min}}}
\newcommand{\AAAS}{}
\newcommand{\AAAE}{}
\newcommand{\UUUS}{}
\newcommand{\UUUE}{}
\newcommand{\rosso}{}
\newcommand{\azzurro}{}
\newcommand{\nero}{} 
\newtheorem{theorem}{Theorem}[section]
\newtheorem{corollary}[theorem]{Corollary}
\newtheorem{lemma}[theorem]{Lemma}
\begin{document}
                                %
                                %
                                %
%%%%%%%%%%%%%%%%%%%%%%%%%%%%%%%%%
%% front page
%%%%%%%%%%%%%%%%%%%%%%%%%%%%%%%%%                            
\title{\Large A rate-independent model for the isothermal \\
quasi-static evolution 
of shape-memory materials
\thanks{\AAAS This research was partially supperted by the European Union via HPRN-CT-2002-00284 \textit{Smart Systems: New Materials, Adaptive
Systems and their Nonlinearities} and by the DFG Research Center 86 {\sc Matheon}
under subproject C18.\AAAE } } 
\author{\normalsize Ferdinando Auricchio\\
{\normalsize Dipartimento di Meccanica Strutturale, Universit\`a di Pavia and}\\ 
{\normalsize Istituto di Matematica Applicata e Tecnologie Informatiche - CNR}\\ 
{\normalsize via Ferrata 1, 27100 Pavia, Italy}\\
{\footnotesize e-mail: {\tt auricchio\,@\,unipv.it}}\\
\\
{\normalsize Alexander Mielke}\\
{\normalsize Weierstra\ss-Institut f\"ur Angewandte Analysis und Stochastik}\\
{\normalsize Mohrenstra\ss e 39, 10117 Berlin, Germany and }\\
{\normalsize Institut f\"ur Mathematik, Humboldt-Universit\"at zu Berlin}\\{\normalsize Rudower Chaussee 25, 12489 Berlin, Germany}\\
{\footnotesize e-mail: {\tt mielke\,@\,wias-berlin.de}}\\
\\
{\normalsize Ulisse Stefanelli}\\
{\normalsize Istituto di Matematica Applicata e Tecnologie Informatiche - CNR}\\ 
{\normalsize via Ferrata 1, 27100 Pavia, Italy}\\
{\footnotesize e-mail: {\tt ulisse\rosso .stefanelli\nero\,@\,imati.cnr.it}}}
\date{}     %qui' si introduce la data
%\thanks{}  %qui' ci vanno gli acknowledgments
                                %
                                %
\maketitle
\begin{abstract}
This note addresses a three-dimensional model for isothermal stress-induced
transformation in shape-memory polycrystalline materials. We \AAAS treat \AAAE
the problem within the framework of the \AAAS energetic formulation \AAAE of rate-independent processes and investigate existence and continuous dependence issues at both the constitutive relation and quasi-static evolution level. Moreover, we focus on time and space approximation as well as on regularization and parameter asymptotics.
\vskip3mm  

\noindent {\bf Key words:} shape-memory materials, rate-independent evolution, existence, time and space discretization, $\, \Gamma$-convergence, asymptotics, error.
\vskip3mm
\noindent {\bf AMS (MOS) Subject Classification:} 74C05, 49J40.
\end{abstract}
\pagestyle{myheadings}
%\markboth{U. Stefanelli}{ }
\markright{}
                                %
                                %
                                %
%%%%%%%%%%%%%%%%%%%%%%%%%%%%%%%%%
%% text
%%%%%%%%%%%%%%%%%%%%%%%%%%%%%%%%%

%%%%%%%%%%%%%%%%%%%%%%%%%%%section
\section{Introduction}\label{intro}
\setcounter{equation}{0}

Shape-memory materials are metallic alloys showing some surprising
thermo-mechanical behavior: severely deformed specimens with residual strain
up to 15\% regain their original shape after a thermal cycle ({\it
  shape-memory effect}). Moreover, the same materials are {\it super-elastic}
(also called {\it pseudo-elastic}), namely, they recover comparably large
deformations during mechanical loading-unloading cycles at prescribed
temperatures (see, among others,
\cite{Antman87,Auricchio97,Fremond-Miyazaki96,Funakubo87,Huo94,Mueller95,Wayman-Duerig90}).
These features\AAAS, which \AAAE are not present (at least to this extent) in
materials traditionally used in engineering\AAAS, \AAAE 
are at the basis of the innovative and commercially valuable applications of shape-memory materials. Namely, shape-memory technologies are nowadays exploited in a variety of different applicative contexts ranging from sensors and actuators (even microscopical), to robotics, to clamping and fixation devices, to space applications (grippers, positioners), to damping devices (shock absorption) \cite{Vanhumbeeck01}. The largest commercial success of shape-memory materials is however related to {biomedical applications}. The combination of good bio-compatibility and interesting material properties creates unique materials for medical tools and devices. Nowadays, shape-memory materials are successfully used in orthodontics (archwires), orthopedics (bone anchors, intromedullary fixations, bone staples), medical instruments, minimal invasive surgery technology (catheters, endoguidewires, grippers, cutters), drug delivery systems, and both intravascular (cardiovascular stenting, bronchial biliary, aortic aneurysm, carotid stenosis) and extravascular scaffolding. In particular, shape-memory stents are the key tool in order to implement a variety of quite successful non-invasive surgical techniques \cite{Duerig99,Stoeckel01,Stoeckel02}.

The present analysis is concerned with the quasi-static evolution of
shape-memory materials in the small-strain regime. In particular, we shall
study a macroscopic phenomenological model for shape-memory polycrystalline
materials undergoing stress-induced transformations \AAAS that \AAAE  was originally proposed by {\sc Souza et al.} \cite{Souza98} and later addressed and extended by {\sc Auricchio \& Petrini} \cite{Auricchio-Petrini02,Auricchio-Petrini04}, and {\sc Auricchio et al.} \cite{Auricchio05}. Our aim is to focus on the isothermal situation at suitably high temperatures in order to capture the super-elastic material behavior. The understanding and the efficient description of the super-elastic regime is clearly of a great applicative interest. In particular, most of the biomedical applications enlisted above are based on super-elastic deployment {\it in situ} and/or super-elastic kink resistance of shape-memory materials. 

Let us briefly recall here the basic features of the proposed model, the interested reader is of course referred to the above-mentioned contributions for all the necessary modeling details and motivations as well as for some computations and validation. The formal character of this introduction is intended to serve for the purpose of a general overview on the model and our results. In particular, (most of) the mathematical details are here omitted and will be provided in the forthcoming sections.

 Moving into the frame of Generalized Standard Materials (see {\sc Maugin} \cite{Maugin92}) and within the small-strain regime, we additively decompose the linearized deformation $\, \epsi= (\epsi_{ij})= (u_{i,j} + u_{j,i})/2 $, ($ u \,$ being the displacement from a fixed reference configuration $\, \Omega \subset \Rz^3$) into the elastic part $\, \epsi_{\text{\rm el}}\,$ and the inelastic (or transformation) part $\, z \,$ as 
\begin{equation}
  \label{additive}
  \epsi = \epsi_{\text{\rm el}} + z.
\end{equation}
At the microscopic level the super-elastic effect is interpreted as the result of a structural phase transition between different configurations of the material lattices, namely the {\em parent phase} (austenite and twinned martensite) and its shared counterpart termed {\em product phase} (detwinned martensite). In particular, the internal variable $\, z \,$ is assumed to be descriptive of the mechanical (tensorial) effect of the detwinning observed in the material.

%% The material is assumed to show elastic response, namely
%% \begin{equation}
%%   \label{stress}
%%   \sigma = \C\epsi_{\text{\rm el}}= \C (\epsi -z),
%% \end{equation}
%where $\, \C \,$ stands for the elasticity tensor. Moreover, the equilibrium of the state quantities is determined by introducing the following potential 

Denoting by $\, W(\epsi,z)\,$ the stored energy density of the system, the evolution of the material will be described by the following classical relations
\begin{eqnarray}
  \sigma &=& {\partial W}/{\partial \epsi}, \label{one22}\\
-\xi&=& {\partial W}/{\partial z}, \label{two22}\\
\dot z &=& {\nabla D^*(\xi)}. \label{three22} 
\end{eqnarray} 
Here, $\, \xi \,$ denotes the thermodynamic force associated with $\, z \,$ and \eqref{three22} is the flow rule for $\, z \,$ where $\, D^* \,$ stands for the Legendre conjugate of the dissipation density $\, D \,$ (see below).
   
The material constitutive relations \eqref{one22}-\eqref{three22} may be conveniently rewritten in the following equivalent subdifferential formulation
\begin{equation}
  \label{evol}
 \binom{0}{\partial D(\dot z)} + \binom{\partial_\epsi W(\epsi,z)}{\partial_z W(\epsi,z)} \ni \binom{ \sigma}{0}. 
\end{equation}
where $\, D \,$ stands for the dissipation density and the symbol $\, \partial \,$ denotes subdifferentials in the sense of Convex Analysis (see below).

The evolution problem \eqref{evol} may be set within the frame of energetic formulations of rate-independent processes recently proposed by {\sc Mielke et al.} \cite{Mainik-Mielke05,Mielke-Theil99,Mielke-et-al02}. The notion of energetic solution (discussed in some detail in the forthcoming Section \ref{math}) is based on equivalently recasting the subdifferential problem \eqref{evol} as the coupling of a global stability condition and an energy conservation relation. In particular, the subdifferential relation \eqref{evol} is rewritten as
\begin{eqnarray}
\text{\rm\bf (stability)}&&\!\!\!\!\!\!\!\!  (\epsi(t),z(t)) \in
\argmin_{(\overline \epsi,\overline z)}  \Big(  W(\overline \epsi,\overline z)
-\sigma(t):\overline \epsi + D(\AAAS{} \overline z - z(t) \AAAE ) \Big)\label{stability_intro}\\ 
\text{\rm\bf (energy equality)}&&\!\!\!\!\!\!\!\! W(\epsi(t),z(t)) - \sigma(t):\epsi(t) + \Diss_D(z,[0,t])\nonumber\\
&\qquad&\qquad \quad =  W(\epsi_0,z_0) - \sigma(0):\epsi_0 - \int_0^t \dot \sigma(s):\epsi(s) \, ds,\label{energy_intro}
\end{eqnarray}
for all $\, t \geq 0$. Here, we assume to be given some suitable initial data $\, (\epsi_0,z_0)\,$ and the stress $\, t \mapsto  \sigma(t) \,$ and denote the total dissipation of the system on $\, [0,t]\,$ as
$$\Diss_D(z,[0,t]):=\sup\left\{\sum_{i=1}^N D(z(t_i) - z(t_{i-1})) \ : \ \{0=t_0<t_1<\dots<t_{N-1}<t_N=t\}\right\},$$
where the supremum is taken with respect to all finite partitions of $\, [0,t]$.
Energetic formulations were originally developed for shape-memory alloys in
{\sc Mielke \& Theil} and {\sc Mielke et al.}
\cite{Mielke-Theil99,Mielke-Theil04,Mielke-et-al02}, and have shown to be
extremely well-suited for a variety of different rate-independent
situations. In particular, they \AAAS have \AAAE been successfully considered in connection with elasto-plasticity \cite{DalMaso05,DalMaso07,Mielke02,Mielke03,Mielke03b,Mielke04}, damage \cite{Mielke-Roubicek05}, brittle fractures \cite{DalMaso04}, delamination \cite{Mainik-Mielke05}, ferro-electricity \cite{Mielke-Timofte05}, shape-memory alloys \cite{Mielke-Roubicek03,Mielke-Theil99,Mielke-et-al02}, and vortex pinning in superconductors \cite{Schmid-Mielke05}. The reader is referred to {\sc Mielke} \cite{Mielke05} for a comprehensive survey of the mathematical theory.

Let us now introduce the precise form of $\, W \,$ we will deal with. Namely, we choose
\begin{equation}
  \label{W}
  W(\epsi,z) = \frac12 \C(\epsi - z): (\epsi -z) + c_1 |z| + c_2 |z|^2 + I(z) + \frac{\nu}{2}|\nabla z|^2.
\end{equation}
Here, $\, \C \,$ is the elasticity tensor and the positive parameters $\, c_1
\,$ and $\, c_2 \,$ are given. Indeed, in \cite{Souza98} the constant $\, c_1
\,$ is assumed to depend explicitly on the temperature of the specimen while
here temperature effects are neglected. On the other hand, $\, c_2 \,$
measures the occurrence of some hardening phenomenon with respect to the
internal variable $\, z$. The function $\, I \,$ is the indicator of a fixed
closed ball of radius $\, c_3 >0$. In particular, $\, c_3 \,$ represents the
maximum modulus of transformation strain that can be obtained by alignment
(detwinning) of the martensitic variants. Finally, the positive coefficient
$\, \nu \,$ is expected to measure some nonlocal interaction effect for the
internal variable $\, z\,$ and $\, \nabla z \,$ stands for the usual gradient
with respect to to spatial variables. Indeed, gradients of inelastic strains
have already been considered in the frame of shape-memory materials by {\sc
  Fr\'emond} \cite{Fremond02} and the reader is referred also to {\sc Arndt et
  al.} \cite{Arndt-Griebel-Roubicek03}, {\sc Fried \& Gurtin}
\cite{Fried-Gurtin94}, {\sc Kru\v z\'\i k et al.} \cite{Kruzik05}, {\sc Mielke
  \& Roub\' \i \v cek} \cite{Mielke-Roubicek03}, {\sc Roub\' \i \v cek}
\cite{Roubicek02,Roubicek04} for examples and discussions on nonlocal \AAAS
energy contributions of \AAAE $\, z$.

The proposed model is capable of describing the main features of the super-elastic evolution of shape-memory materials. In particular, the internal variable tensorial character of the model allows for taking into account the so-called {\it single-variant martensite reorientation} phenomenon. Namely, also in the case the material is fully transformed into product phase (i.e. $\, |z|=c_3$), inelastic strain changes can still be experienced due to variant reorientation ($\dot z \not = 0$). This fact is experimentally observed and turns out to be crucial with respect to applications. Moreover, whenever not restricted to the isothermal situation, the model turns out the be thermodynamically consistent in the sense that the Second Law of Thermodynamics is satisfied in the form of the Clausius-Duhem inequality.

As for the full quasi-static evolution of the material we shall couple the constitutive relation \eqref{evol} with the equilibrium equation
\begin{equation}\label{equi}
\text{div}\,\sigma + f =0 \quad \text{in} \ \ \Omega,
\end{equation}
where $\, f \,$ is a given body force, suitably complemented with some prescribed boundary displacement and boundary traction in distinguished parts of the boundary of $\, \Omega$.

The first issue of this paper is that of adapting the above referred abstract theory for energetic formulations to the quasi-static evolution problem and obtain that (Theorem \ref{const2})

\begin{description}
\item[\quad(existence)] \ \  the quasi-static problem admits at least one energetic solution $\, t \mapsto (u(t),z(t))$.
\end{description}

We shall be concerned with some specific regularization of the original quasi-static model. Namely, some smooth variant of the potential $\,W \,$ above turns out to be better suited for the sake of numerical considerations. In particular, we will consider a regularized version of the model by posing
\begin{equation}\label{W2}
  \rosso W_{\rho,\nu}\nero(\epsi,z)= \frac12 \C(\epsi - z): (\epsi -z) + F_\rho(z) + \frac{\nu}{2}|\nabla z|^2,
\end{equation}
where \rosso $\, \nu \geq 0\,$ and \nero $\, F_\rho \,$ is some regularization of $\, F_0: z \mapsto c_1 |z| + c_2 |z|^2 + I(z)\,$ obtained by penalization and smoothing and depending on the regularization parameter $\, \rho\geq 0$.  This regularization is exactly the starting point of {\sc Auricchio \& Petrini} \cite{Auricchio-Petrini02,Auricchio-Petrini04}, and has been exploited in {\sc Auricchio et al.} \cite{Auricchio05} as well (in all these papers $\, \nu=0\,$ though).

A second focus of the present contribution is on unique solvability of the regularized model. In particular, we check that 
\begin{description}
\item[\quad(uniqueness for $\, {\boldsymbol \rho \boldsymbol >\boldsymbol 0}$)] \ \ for $\, \rho >0 $, the quasi-static problem has a unique solution.
\end{description}
This uniqueness result was proved in an abstract frame by {\sc Mielke \& Theil} \cite{Mielke-Theil99,Mielke-Theil04} and is here reconsidered in the specific situation of the regularized version of the quasi-static problem.
 
A quite natural approach to rate-independent evolution problems relies on implicit time-discretization. This perspective is here investigated and complemented with some space approximation technique. In particular, the main novelty of this paper is the convergence analysis for the discretized-regularized model. Namely, we consider the (possibly joint) limits with respect to the time-steps $\, \tau \,$ of time partitions (here considered to be constant for simplicity), the space mesh size $\, h \,$ (conforming finite elements are exploited), and the regularization parameter $\, \rho$. In particular, denoting by $\, (u,z)_{\rho,\tau,h} \,$ the unique solution to the space-time discrete problem with the parameter-choice $\, \rho \geq 0 \,$ (time-interpolant, piecewise constant on the time-partition) and by $\, (u,z)_\rho \,$  the time-continuous solution to the problem for $\, \rho \geq 0$, we prove the following (Theorem \ref{EP})
\begin{description}
\item[\quad(convergence for $\, {\boldsymbol \rho \boldsymbol >\boldsymbol 0}$)]  \ \ for $\, \rho >0$, $\, (u,z)_{\rho,\tau,h} \,$ converges to $\, (u,z)_\rho\,$ as $\, (\tau,h)\to (0,0)$,
\item[\quad{(full convergence)}] \ \ up to a subsequence, $\, (u,z)_{\rho,\tau,h} \to (u,z)_0\ \ \text{as} \ \  (\rho,\tau,h)\to (0,0,0)$.
\end{description}
Of course the topologies under which the latter convergences hold true will be
specified in the forthcoming \AAAS sections.
 
Indeed much more is true and we are in the position of giving a full picture
of convergences for the model subsequently. 
%%
%% The reader is referred to the forthcoming of the paper for the largest
%% generality and the necessary mathematical detail.  
%%
Moving \AAAE from Section \ref{math} where the mathematical formulation of the problem is presented, we shall organize our results by successively increasing complexity. Section \ref{constitutive} addresses the analysis of the constitutive relation problem \eqref{evol}, namely the zero-dimensional problem. In particular, we prove well-posedness and convergence of time-discrete approximations. Then, the three-dimensional minimum problem arising from time-discretization is addressed in Section \ref{incremental} where we also investigate well-posedness and convergence of space approximations along with suitable error bounds. Some a priori bounds and a preliminary convergence result for the incremental solutions to the problem in case the time-partition is fixed are discussed in Section \ref{INCRE}. Finally, the three-dimensional quasi-static evolution problem is tackled in Section \ref{evoevo} where we provide the above mentioned existence, uniqueness, and convergence results for the space-time discrete solutions. Finally, Section \ref{limitrho} deals with convergence issues with respect to parameters and discretizations in full generality.

%%%%%%%%%%%%%%%%%%%%%%%%%%%section
\section{Mathematical formulation}\label{math}
\setcounter{equation}{0}

\paragraph{Tensors.} We will denote by $\, \Rzn \,$ the space of symmetric $\, 3 \times 3\,$ tensors endowed with the natural scalar product $\,a : b := \text{tr}(ab)= a_{ij}b_{ij}\,$ (summation convention) and the corresponding norm $\, |a|^2:= a:a\,$ for all $\, a, \, b \in \Rzn$. The space $\, \Rzn \,$ is orthogonally decomposed as $\,\Rzn = \Rzd \oplus \Rz \,1_2$, where $\, \Rz \,1_2\,$ is the subspace spanned by the identity 2-tensor $\, 1_2\,$ and $\, \Rzd \,$ is the subspace of deviatoric symmetric $\, 3 \times 3\,$ tensors. In particular, for all $\, a \in \Rzn$, we have that $\, a = a_{\text{\rm dev}} + \tr(a)1_2/3$. For all $\, u \in H^1_{\text{loc}}(\Rz^3; \Rz^3)\,$ we let $\, \epsi(u)\in L^2_{\text{loc}}(\Rz^3; \Rzn )\,$ denote the standard symmetric gradient.

\paragraph{Reference configuration.} We shall assume $\, \Omega \,$ to be a non-empty, bounded, and connected open set in $\, \Rz^3 \,$ with a Lipschitz continuous boundary. The space dimension $\, 3 \,$ plays essentially no role throughout the analysis and we would be in the position of reformulating our results in $\, \Rz^d \,$ with no particular intricacy. We assume that the boundary $\, \partial \Omega \,$ is partitioned in two disjoint open sets $\, \GammaT \,$ and $\, \GammaD\,$ with $\, \partial \GammaT = \partial \GammaD\,$ (in $\, \partial \Omega$). We ask $\, \GammaD \,$ to be such that there exists a positive constant $\, c_0 \,$ depending on $\, \GammaD\,$ and $\, \Omega \,$ such that the Korn inequality
\begin{equation}\label{korn}
c_0\| u \|^2_{H^1(\Omega;\Rz^3)} \leq \| u \|^2_{L^2(\GammaD;\Rz^3)} + \| \epsi (u) \|^2_{L^2(\Omega; \Rzn)},
\end{equation}
holds true for all $\, u \in H^1(\Omega;\Rz^3)$. It would indeed suffice to impose $\, \GammaD \,$ to have a positive surface measure (see, e.g., \cite[Thm. 3.1, p. 110]{Duvaut-Lions}).

\paragraph{Prescribed boundary displacement.} We will prescribe some non-homogeneous Dirichlet\linebreak boundary conditions on $\, \GammaD$. To this end, we will assign $\,\uD \in C^1([0,T]; H^{1/2}(\GammaD, \Rz^3)) \,$ or, equivalently, $\, \uD \in  C^1([0,T]; H^1(\Omega,\Rz^3))\,$ whose trace on $\, \GammaD\,$ is the prescribed boundary value for the displacement $\, u$. On $\, \GammaT \,$ some time-dependent traction will be prescribed instead.

\paragraph{Elastic energy.} Let $\, \C \,$ be the elasticity tensor. The latter  is regarded as a symmetric positive definite linear map $\, \C : \Rzn \rightarrow \Rzn$. We shall assume that the orthogonal subspaces $\, \Rzd \,$ and $\, \Rz \, 1_2\,$ are invariant under $\, \C $. This amounts to say that indeed 
$$ \C a= \C_{\text{\rm dev}} a_{\text{\rm dev}} + \kappa\, \text{tr}(a)1_2,$$ 
for a given $\, \C_{\text{\rm dev}}: \Rzd \rightarrow \Rzd\,$ and a constant $\, \kappa$, and all $\, a \in \Rzn$. The case of isotropic materials is given by $\, \C_{\text{\rm dev}} = 2G (1_4 - 1_2 \otimes 1_2/3) \,$ and $\, G \,$ and $\, \kappa \,$ are respectively the shear and the bulk moduli. The latter decomposition is not exploited in our analysis but it is clearly suggested by the mechanical application.

We will make use of the stored elastic energy functional $\, \CC : L^2(\Omega; \Rzn) \rightarrow [0,+\infty) \,$ defined as
$$ \CC (a) :=\frac12 \int_\Omega \C(a):a \, dx.$$ 

\paragraph{Inelastic energy.}
As for the stored inelastic (or transformation) energy we shall prescribe the function $\, F : \Rzd \rightarrow [0,+\infty]\,$ as 
$$F(a) = c_1|a| + c_2|a|^2 + I(a),$$
where $\,I: \Rzd \rightarrow [0,+\infty]\,$ is the indicator function of the ball $\, \{a \in \Rzd \ : \ |a| \leq c_3\}\,$ and the positive constants $\, c_1, \, c_2,$ and $\, c_3 \,$ are given. Moreover, the stored inelastic energy functional is defined as 
$\, \F :  L^2(\Omega; \Rzd) \rightarrow [0,+\infty]\,$ as
$$\F(a):= \int_\Omega F(a) \, dx \quad \text{if} \ \ F(a) \in L^1(\Omega) \ \ \text{and} \ \ \F(a)=+\infty \ \ \text{otherwise}.$$
The well-posedness and time discretization issues discussed here do not rely on the particular form of $\, F \,$ and could be adapted to any uniformly convex, proper, and lower semicontinuous function. We however prefer to stick to the actual modeling choice for the sake of clarity.
In the forthcoming of the paper we will address some suitable regularization of $\, F$. Indeed, we introduce an approximation parameter $\, \rho \geq 0 \,$ and some functions 
\begin{gather}
  F_\rho \in C^{2,1}(\Rzd) \ \ \text{with}   \  \ \azzurro \nabla^2 F_\rho\nero \ \ \text{bounded}, \ \ \nabla^2F_\rho \geq c_2 1_4, \  \ \text{and $\,F_\rho(0) = 0$,} \label{F_rho}
\end{gather}
and define $\, F_0:=F$.
An example in the direction of \eqref{F_rho} is
\begin{gather}
F_\rho(a):= c_1 (\sqrt{\rho^2 + |a|^2} - \rho) + c_2|a|^2 +\varphi(|a|)/\rho\nonumber\\
 \text{for} \ \ \varphi \in C^{2,1}(\Rz), \ \ \varphi' \in L^\infty(\Rz),  \ \ \varphi''\geq0,  \ \ \varphi(r) =0 \ \ \text{iff} \ \ r \leq c_3.\label{sceltaF}
\end{gather}

Exactly as above, for all $\, \rho \geq 0 \,$ we let the regularized stored inelastic energy functional
$\, \F_\rho :  L^2(\Omega; \Rzd) \rightarrow [0,+\infty)\,$ be defined as
$$\F_\rho(a):= \int_\Omega F_\rho(a) \, dx,$$
and $\, \F_0:=\F$.
Finally, we shall be considering also some space-regularized situation. To this end, let $\, \rho, \, \nu\geq 0 \,$ and define $\, \F_{\rho,\nu}\ :  L^2(\Omega; \Rzd) \rightarrow [0,+\infty]\,$ as
$$\F_{\rho,\nu}(a):= \int_\Omega\left( F_\rho(a)  + \frac{\nu}{2} |\nabla a|^2\right) dx,$$
where $\, (\nabla a)_{ijk}= \partial a_{ij}/\partial x_k\,$ is the usual gradient in the distributional sense and $\, |\cdot| \,$ denotes here the Euclidean norm.

\paragraph{Stored energy.} Following the above introductory discussion, we define the stored (Helmholtz free) energy functional for $\, \rho,\, \nu \geq 0\,$ as
$$\W_{\rho,\nu}(u,z):=  \CC(\epsi(u) - z) + \F_{\rho,\nu}(z).$$

\paragraph{Load and traction.} We assume to be given the body force $\, f \in W^{1,1}(0,T;L^2(\Omega;\Rz^3))\,$ and a surface traction $\, g \in W^{1,1}(0,T;L^2(\GammaT;\Rz^3))$. In particular, one can define the total load $\, \ell \in W^{1,1}(0,T;(H^1(\Omega; \Rz^3))')\,$ (the prime denotes here the dual) as
$$\lan \ell(t), u \ran := \int_\Omega f \cdot u \, dx + \int_{\GammaT} g \cdot u \, d\H^{2} \quad \forall u \in H^1(\Omega; \Rz^3), \ t \in [0,T],$$
where $\, \H^2 \,$ is the 2-dimensional Hausdorff measure and $\, \lan\cdot,\cdot\ran \,$ denotes the duality pairing between $\, (H^1(\Omega;\Rz^3))'\,$ and $\, H^1(\Omega;\Rz^3)$.

\paragraph{State space.} We set our problem by letting
$$ \Y={\cal U} \times {\cal Z}^\nu:=  H^1(\Omega, \Rz^3)\times H^{j(\nu)}(\Omega;\Rzd).$$
Here $\, j(\nu) =0 \,$ for $\,\nu=0 \,$ and $\, j(\nu) =1 \,$ otherwise. For all $\, \overline u \in H^1(\Omega;\Rz^3)$, let us define $\, \Y (\overline u) \subset \Y\,$ as
$$\Y(\overline u):= \{ (u,z) \in \Y \ : \ u=\overline u \ \  \text{on} \ \ \GammaD \},$$
Then, for all $\, t \in [0,T]$, we shall define the phase space of the process as $\,\Y(\uD(t)).$ For the sake of later purposes (see also \eqref{W}) let us denote by $\, W_\rho : \Rzn \times \Rzd \rightarrow [0,+\infty)\,$ the function
$$W_\rho(\epsi,z) := \frac12\C(\epsi -z): (\epsi - z) + F_\rho(z),$$
\rosso 
and remark that, owing to \eqref{F_rho}, the second derivative $\, \nabla^2 W_\rho\,$ is bounded and Lipschitz continuous. Moreover, we denote
\nero 
by $\, \A: \Y \rightarrow [0,\infty)\,$ the quadratic form
$$\A(u,z):= \CC(\epsi(u) - z) + c_2\int_\Omega |z|^2dx + \frac{\nu}{2}\int_\Omega|\nabla z|^2dx\quad \forall (u,z)\in \Y$$
and by $\, \alpha >0 \,$ the corresponding uniform ellipticity constant (depending on $\, \C, \, c_2$, and $\, \nu$).

\paragraph{Dissipation potential.} The quasi-static evolution of the material is described by means of an appropriate dissipation mechanism, see \eqref{evol}. To this aim, we choose the dissipation (pseudo)-potential $\, D: \Rzd \rightarrow [0,+\infty)\,$ to be lower semi-continuous, positively $\,1-$homogeneous, and to fulfill the triangle inequality
\begin{equation}
  \label{triangle}
  D(a) \leq D(b) + D(c)\ \ \ \text{whenever} \  \ \ a= b+c.
\end{equation}
 Moreover, we ask for some constant $\, c_D>0 \,$ such that 
$$c_D|a| \leq D(a) \quad \forall a \in \Rzd.$$
Under the current assumptions on $\, D$, the latter non-degeneracy condition is indeed equivalent to the fact that the set $\, \{a\, : \, D(a) \leq 1 \}\,$ is bounded or that $\, D \,$ does not vanish except in $\, 0$. Let us stress that $\, D \,$ turns out to be convex (see \eqref{triangle}) and that there exists a second constant $\, C_D>0 \,$ such that 
$$D(a) \leq C_D |a| \quad \forall a \in \Rzd.$$
We define the corresponding dissipation functional $\, \D: L^1(\Omega;\Rzd) \rightarrow [0,+\infty)\,$ as
$$\D(a)=\int_\Omega D(a)\, dx. $$
One shall stress that indeed, since $\, \D \,$ is obviously positively 1-homogeneous, a rate-independent evolution follows.
Moreover, we recall here that, for all $\, z:[0,T]\rightarrow \Rzd$, we let
\begin{equation}
\Diss_D(z,[s,t]):=\sup\left\{\sum_{i=1}^N D(z(t_i) - z(t_{i-1})) \ : \ \{s=t_0<t_1<\dots<t_{N-1}<t_N=t\}\right\},\label{diss}
\end{equation}
the supremum being chosen on the set of all finite partitions of $\, [s,t]\subset [0,T]$. Finally the analogous notion $\, \Diss_\D(z,[s,t])\,$ will be used for functions which take values in $\, L^1(\Omega;\Rzd)$.

\paragraph{State space approximation.} Henceforth we will be interested in some space approximation procedure. Indeed, we assume to be given a suitable sequence of approximating closed subspaces $\, \Y_h:={\cal U}_h \times {\cal Z}_h^\nu\subset \Y \,$ depending on some parameter $\, h > 0\,$ which is intended to go to zero in the limit. We shall collect and comment here the abstract assumptions which will be exploited in the following. Of course the main application we have in mind are conforming finite elements on a shape regular and quasi-optimal mesh \cite{Ciarlet78} with size $\, h\,$ on the polyhedral domain $\, \Omega$. We will firstly ask $\, \Y_h \,$ to be non-decreasing and such that $\, \cup_{h>0}\Y_h \,$ is dense in $\, \Y$. Moreover, we restrict from the very beginning to the special case when $\, {\cal Y}^0_h\equiv {\cal Y}^1_h \subset {\cal Y}^1$.

Now let $\, p_h^\nu: \Y \rightarrow \Y_h \,$ the Galerkin projector corresponding to the scalar product induced by the quadratic form $\,\A$. In particular, by introducing the bilinear form $\, {\cal B}_\nu: \Y \times \Y \to \Rz\,$ defined by
\begin{gather}
  {\cal B}_\nu \big((u_1,z_1),(u_2,z_2)\big) := \frac12 \int_\Omega \C(\epsi(u_1) - z_1):(\epsi(u_2) - z_2) + c_2 \int_\Omega z_1 \, z_2 +\frac{\nu}{2}\int_\Omega \nabla z_1 \cdot \nabla z_2\nonumber
\end{gather}
for $\,  (u_1,z_1), \, (u_2,z_2) \in \Y$, we have that, for all $\, (u,z)\in \Y$, the projection $\, p_h^\nu(u,z)\,$ may be uniquely determined by
\begin{gather}
  {\cal B}_\nu \big((u,z) -  p_h^\nu(u,z), (u_h,z_h)\big) =0 \quad \forall (u_h,z_h)\in \Y_h.\label{gal}
\end{gather}
Namely, one has that
\begin{gather}
  \A( p_h^\nu(u,z))=  {\cal B}_\nu( p_h^\nu(u,z), p_h^\nu(u,z))\leq \A(u,z) \quad \forall (u,z) \in \Y.\label{gal2}
\end{gather}
Let us explicitly observe that $\, p_h^\nu \,$ is pointwise converging in $\, \Y\,$ to the identity as $\, h \rightarrow 0 $.

Next, let us introduce a pair of operators $\, q_h: {\cal U}\rightarrow {\cal U}_h \,$ and $\, r_h^\nu:{\cal Z}^\nu\rightarrow {\cal Z}^\nu_h\,$ and ask them to be pointwise converging to the identity as $\, h \rightarrow 0$. More specifically, we will ask for 
$$h \to 0, \ \nu \to 0 \ \ \Rightarrow \ \ r^\nu_h(z) \rightarrow z \quad \forall z \in {\cal Z}^\nu.$$
Moreover, we require that
\begin{equation}
  \label{clem}
  z \in {\cal Z}^0 \ \ \text{and} \ \ |z| \leq c_3 \ \ \text{a.e. in} \ \ \Omega \quad \Rightarrow \ \ |r_h^\nu(z)| \leq c_3 \ \ \text{a.e. in} \ \ \Omega , 
\end{equation}
and that $\, r_h^0 : {\cal Z}^1 \rightarrow {\cal Z}^1\,$ maps bounded sets into bounded sets. As for $\, r_h^\nu \,$ an example of operator fulfilling the assumptions is the component-wise Cl\'ement interpolant from $\, L^1(\Omega;\Rzd) \,$ to the space of piecewise linear functions \cite{Clement74}. In this case, relation \eqref{clem} follows from Jensen's inequality.

%%%%%%%%%%%%%%%%%%%%%%%%%%%section
\section{Analysis of the constitutive relation}\label{constitutive}
\setcounter{equation}{0}

Let us start our analysis by focusing on the constitutive material relation. Namely, we neglect for the moment the coupling of the material model with the equilibrium problem \eqref{equi}. Assuming to be given a tension history, we solve for the elastic and the inelastic strain starting from a given state. The understanding of this simplified (reduced) problem will be crucial. First of all, a detailed study of the constitutive relation is surely an important step in the direction of the investigation of the full quasi-static evolution problem. This in especially true with respect to numerics. Indeed, the efficient solution of the constitutive relation is the key ingredient for a full discretization procedure. Secondly, the full equilibrium system might reduce to a zero-dimensional problem under specific yet common geometric restrictions or symmetries. Finally, we aim to give in this somehow (notationally) simplified situation the main points of our analysis.

Assuming to be given $\,  \sigma: [0,T]\rightarrow \Rzn$, we shall determine $\, \epsi : [0,T] \rightarrow \Rzn\,$ and $\, z:[0,T] \rightarrow \Rzd\,$ starting from $\, (\epsi_0,z_0)\,$ and fulfilling \eqref{evol}. Of course, since the transformation strain $\, z \,$ is assumed to be deviatoric and the elasticity tensor $\, \C \,$ decomposes as above, the problem could be easily reformulated in the deviatoric subspace $\, \Rzd \,$ only. We however prefer not to exploit this simplification for the sake of consistency with the forthcoming analysis.

Let $\, \rho \geq 0 \,$ be fixed throughout this section. We shall be concerned with the energy function $\, W_\rho(\epsi,z) - \sigma(t) : \epsi\,$ which is defined for all $\, (t, \epsi,z) \in [0,T]\times\Rzn\times \Rzd.$
Moreover, let us define the set of {\it stable states} at time $\, t \in [0,T]\,$ as
\begin{gather}
  S(t):=\Big\{(\epsi,z) \in \Rzn \times \Rzd  \ \ \text{such that}, \ \ \forall (\overline \epsi, \overline z) \in  \Rzn \times \Rzd,\nonumber\\
W_\rho(\epsi,z) - \sigma(t) : \epsi \leq W_\rho(\overline \epsi, \overline z) - \sigma(t) : \overline \epsi+ D(\overline z - z)  
\Big\},  \label{stable}
\end{gather}
and $\, \S:= \cup_{t \in [0,T]}(t,S(t))$.

As for an {\it energetic solution} of \eqref{evol} we mean a pair $\, (\epsi,z) : [0,T] \rightarrow \Rzn\times \Rzd\,$ such that the function $\, t \mapsto \dot \sigma(t): \epsi(t)\,$ is integrable and, for all $\, t \in [0,T]$, 
\begin{eqnarray}
&&  (\epsi(t),z(t))\in S(t), \label{s}\\
&&W_\rho(\epsi(t),z(t)) - \sigma (t) : \epsi(t) +\Diss_D(z,[0,t]) \nonumber\\
&&\qquad = W_\rho(\epsi_0,z_0) - \sigma (0) : \epsi_0- \int_0^t \dot\sigma(s):\epsi(s)\, ds.\label{e}
\end{eqnarray}

Let us now comment on the equivalence between \eqref{evol} and the {\it energetic formulation} \eqref{s}-\eqref{e}. To this end we will focus for simplicity on the smooth case $\, \rho>0$. Indeed, the argument for the situation $\, \rho =0\,$ is just slightly less straightforward from a notational viewpoint. Using the definition of the subdifferential $\, \partial D(\dot z)$, relation \eqref{evol} turns out to be equivalent to  
\begin{gather}
  (\partial_\epsi W_\rho(\epsi,z) - \sigma):(v- \dot \epsi) + \partial_zW_\rho(\epsi,z):(w - \dot z)+ D(w) - D(\dot z)\geq 0 \nonumber\\
 \forall (v,w) \in \Rzn \times \Rzd, \ \ \text{a.e. in} \ \ (0,T).\label{evol2}
\end{gather}
Now, by respectively choosing $\, (v,w)=(k \overline v, k \overline w)\,$ and letting $\, k \rightarrow +\infty \,$ or $\,  (v,w)=(0,0)\,$ in the latter relation we easily get that
\begin{eqnarray}
 &&(\partial_\epsi W_\rho(\epsi,z) - \sigma):\overline v + \partial_zW_\rho(\epsi,z):\overline w + D(\overline w) \geq 0 \nonumber\\
 && \qquad\qquad\qquad\qquad\qquad\qquad\forall (\overline v, \overline w) \in \Rzn \times \Rzd, \ \ \text{a.e. in} \ \ (0,T),\label{s1}\\
&&(\partial_\epsi W_\rho(\epsi,z) - \sigma):\dot \epsi + \partial_zW_\rho(\epsi,z):\dot z + D(\dot z) \leq 0 \quad \text{a.e. in} \ \ (0,T).\label{e1}
\end{eqnarray}
Of course \eqref{evol2} and \eqref{s1}-\eqref{e1} are equivalent. Now, since $\, W_\rho \,$ is strictly convex, we have that $\, (\epsi(t),z(t) )\,$ is the almost everywhere unique minimizer of 
$$(\overline \epsi,\overline z) \mapsto W_\rho(\overline \epsi,\overline z) -
\sigma: \overline \epsi + 
D(\overline z - z\AAAS (t) \AAAE ).$$
In particular, by assuming $\, \epsi, \, z$, and $\, \sigma \,$ to be absolutely continuous (see below), we readily check that \eqref{s} holds. Moreover \eqref{s1}-\eqref{e1} imply that 
$$(\partial_\epsi W_\rho(\epsi,z) - \sigma):\dot \epsi + \partial_zW_\rho(\epsi,z):\dot z + D(\dot z) = 0 \quad \text{a.e. in} \ \ (0,T),$$
which can be rewritten as
$$\frac{d}{dt}\big(W_\rho(\epsi,z) - \sigma: \epsi \big)= - \dot \sigma:\epsi - D(\dot z)\quad \text{a.e. in} \ \ (0,T).$$
Hence, by integrating the latter on $\,(0,t)\,$ for $\, t \in [0,T]$, we readily deduce \eqref{e}. Vice versa, \eqref{e} allows us to recover \eqref{s1}-\eqref{e1}\, at once by differentiating and exploiting \eqref{s}.

The main advantage of the energetic formulation \eqref{s}-\eqref{e} is that it
\AAAS does 
%% not   [[[ otherwise it would be double negation ]]]
involve \AAAE neither derivatives of constitutive quantities nor of the solution. It is hence particularly well-suited for the aim of proving well-posedness results and it simply generalizes to possibly non-convex situations. 

The aim of this section is to exploit here the abstract existence theory for energetic formulations developed in \cite{Francfort-Mielke05,Mainik-Mielke05} and adapt it to the current modeling situation. 

\paragraph{The incremental problem.} In order to find an energetic solution to \eqref{s}-\eqref{e} we shall consider an implicit time discretization procedure. At first, let us observe that, for all $\, \overline z \in \Rzd \,$ and $\, t \in [0,T]$, the function $\, (\epsi,z) \mapsto W_\rho(\epsi,z)-\sigma(t):\epsi +D(z - \overline z) \,$ has a unique minimum since it is uniformly convex and coercive. Let now the partition $\, P:=\{0=t_0<t_1<\dots<t_{N-1}<t_N=T\}\,$ be given with diameter $\, \tau = \max_{i=1, \dots, N}t_i- t_{i-1}$. Moreover, let $\, (\epsi_0, z_0) \in S(0)\,$ be a given initial datum. One should consider that, for any given $\, z_0 \in \Rzd$, there exists a unique $\, \epsi_0 = {\cal L} z_0$, where $\, {\cal L}=id\,$ here, with $\, (\epsi_0, z_0) \in S(0)$. Hence, we solve iteratively the minimum problem
\begin{equation}
  \label{return}
 (\epsi_i,z_i) \in \argmin_{(\epsi,z) \in \Rzn \times \Rzd}\big(W_\rho(\epsi,z) - \sigma(t_i):\epsi +D(z - z_{i-1}) \big) \quad \text{for} \ \ i=1, \dots,N.
\end{equation}
 We shall refer to the latter as the {\it incremental problem} associated with \eqref{s}-\eqref{e}. Let us explicitly observe that, by the triangle inequality, any solution $\, (\epsi_i,z_i)\,$ to \eqref{return} solves also 
 \begin{equation}
  \label{return2}
  (\epsi_i,z_i) \in \argmin_{(\epsi,z) \in \Rzn \times \Rzd}\big(W_\rho(\epsi,z)  -  \sigma(t_i):\epsi  +D(z - z_{i})\big)\quad \text{for} \ \ i=1, \dots,N.
\end{equation}

\paragraph{Error propagation.} We shall start by providing a continuous dependence result for the single-step minimum problem in \eqref{return}. Referring to the forthcoming time-stepping procedure, the following estimate can be seen as some error propagation control.
\begin{lemma}[Continuous dependence]\label{cont_dep_const} Let $\, (\sigma^j, \overline z^j) \in \Rzn \times \Rzd \,$ $\, j=1,2,$ be given and $\, (\epsi^j,z^j):=\argmin_{( \epsi,  z) \in \Rzn \times \Rzd}(W_\rho(\epsi,z) - \sigma^j:\epsi + D(z - \overline z^j))$. Then 
  \begin{equation}
    \label{cont_dep_const_eq}
    |\epsi^1 - \epsi^2|^2 + |z^1 - z^2|^2 \leq \frac{1}{\alpha^2}|\sigma^1 - \sigma^2|^2 + \frac{4}{\alpha}D(\overline z^1 - \overline z^2).
  \end{equation}
\end{lemma}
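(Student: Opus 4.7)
The plan is to exploit uniform convexity of the minimised functional combined with the optimality of the two minimisers. Writing
$$ J^j(\epsi,z) := W_\rho(\epsi,z) - \sigma^j : \epsi + D(z - \overline z^j), \qquad j=1,2, $$
I observe that $J^j$ is strongly convex in $(\epsi,z)$: the quadratic part of $W_\rho$ together with the Hessian bound $\nabla^2 F_\rho \geq c_2 1_4$ from \eqref{F_rho} gives exactly the ellipticity modulus $\alpha$ of $\A$; the linear term $-\sigma^j{:}\epsi$ is affine and $D(\cdot - \overline z^j)$ is convex (being $1$-homogeneous and satisfying the triangle inequality). Hence the minimality of $(\epsi^j,z^j)$ upgrades to the quadratic lower bound
$$ J^j(\epsi^k,z^k) \geq J^j(\epsi^j,z^j) + \frac{\alpha}{2}\bigl(|\epsi^j-\epsi^k|^2 + |z^j-z^k|^2\bigr), \qquad \{j,k\}=\{1,2\}. $$

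Next, I add the two inequalities obtained for $(j,k)=(1,2)$ and $(j,k)=(2,1)$. The $W_\rho$ contributions cancel, leaving
$$ (\sigma^2-\sigma^1):(\epsi^1-\epsi^2) + D(z^2-\overline z^1) + D(z^1-\overline z^2) - D(z^1-\overline z^1) - D(z^2-\overline z^2) \geq \alpha\bigl(|\epsi^1-\epsi^2|^2+|z^1-z^2|^2\bigr). $$
The dissipation combination is controlled by the triangle inequality \eqref{triangle}, which yields
$$ D(z^2-\overline z^1) \leq D(z^2-\overline z^2) + D(\overline z^2-\overline z^1), \qquad D(z^1-\overline z^2) \leq D(z^1-\overline z^1) + D(\overline z^1-\overline z^2), $$
so the middle four $D$-terms collapse into $D(\overline z^1-\overline z^2) + D(\overline z^2-\overline z^1)$ (which, up to the choice of orientation, I identify with $2\,D(\overline z^1 - \overline z^2)$ as used in the statement).

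Finally, I absorb the stress contribution by Young's inequality,
$$ (\sigma^2-\sigma^1):(\epsi^1-\epsi^2) \leq \frac{1}{2\alpha}|\sigma^1-\sigma^2|^2 + \frac{\alpha}{2}|\epsi^1-\epsi^2|^2, $$
which after rearrangement leaves
$$ \frac{\alpha}{2}\bigl(|\epsi^1-\epsi^2|^2+|z^1-z^2|^2\bigr) \leq \frac{1}{2\alpha}|\sigma^1-\sigma^2|^2 + 2\,D(\overline z^1-\overline z^2). $$
Multiplying by $2/\alpha$ produces exactly \eqref{cont_dep_const_eq}. The only mild subtlety I foresee is the treatment of the $D$-terms, where the absence of a symmetry assumption on $D$ forces me to pair $D(\overline z^1 - \overline z^2)$ with $D(\overline z^2 - \overline z^1)$; the factor $4/\alpha$ in the claim reflects precisely this pairing, combined with the factor $2/\alpha$ coming from inverting the strong-convexity estimate.
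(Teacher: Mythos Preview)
Your proof is correct and follows essentially the same route as the paper: exploit uniform convexity at the minimisers, sum the two resulting inequalities so that the $W_\rho$ terms cancel, control the $D$-terms by the triangle inequality, and finish with Young's inequality on the $\sigma$-term. The only cosmetic difference is that the paper applies the convexity gap asymmetrically (a full $\alpha$ from the minimality of $(\epsi^1,z^1)$ and the plain inequality $0\leq\ldots$ from the minimality of $(\epsi^2,z^2)$), whereas you apply it symmetrically with $\alpha/2$ on each side; both yield the same intermediate estimate $\alpha(|\epsi^1-\epsi^2|^2+|z^1-z^2|^2)\leq(\sigma^1-\sigma^2){:}(\epsi^1-\epsi^2)+2D(\overline z^1-\overline z^2)$.
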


\begin{proof}
  Since $\, (\epsi^1,z^1)\,$ is minimal and $\, W_\rho \,$ is uniformly convex of constant $\, \alpha\,$ one has that
  \begin{eqnarray}
    \alpha |\epsi^1 - \epsi^2|^2 + \alpha |z^1 - z^2|^2 &\leq& W_\rho(\epsi^2,z^2)- \sigma^1:\epsi^2 + D(z^2 - \overline z^1)\nonumber\\
&-&W_\rho(\epsi^1,z^1)+\sigma^1:\epsi^1 -D(z^1 - \overline z^1).\nonumber
 \end{eqnarray}
On the other hand, the minimality of $\,(\epsi^2,z^2)\,$ entails that
 \begin{gather}
   0 \leq W_\rho(\epsi^1,z^1)- \sigma^2:\epsi^1 + D(z^1 - \overline z^2)
-W_\rho(\epsi^2,z^2)+\sigma^2:\epsi^2 -D(z^2 - \overline z^2).\nonumber
 \end{gather}
Taking the sum of the latter relations and exploiting the triangle inequality \eqref{triangle} we get that 
$$ \alpha |\epsi^1 - \epsi^2|^2 + \alpha |z^1 - z^2|^2 \leq (\sigma^1 - \sigma^2):(\epsi^1- \epsi^2) + 2D(\overline z_1 - \overline z_2),$$
whence the assertion follows.
\end{proof}

\paragraph{The evolution problem.} We shall now provide the main result of this section which follows by passing to the limit in the above described time-discrete approximation.

\begin{theorem}[Existence for $\,\boldsymbol \rho \boldsymbol \geq \boldsymbol 0$]\label{const}
Given $\, \sigma \in W^{1,1}(0,T;\Rzn) \,$ and  $\, (\epsi_0,z_0)\in S(0) \,$ there exists an energetic solution $\, (\epsi,z)\,$ to \eqref{s}-\eqref{e} such that $\, (\epsi(0),z(0))=(\epsi_0,z_0)$. Moreover $\, (\epsi,z) \in W^{1,1}(0,T;\Rzn\times \Rzd)$.
\end{theorem}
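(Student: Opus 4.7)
The plan is to invoke the standard implicit time-discretization scheme built on the incremental problem \eqref{return}, derive uniform $BV$-type estimates from Lemma \ref{cont_dep_const}, extract a pointwise limit by a Helly-type selection argument, and finally verify the stability \eqref{s} and the energy balance \eqref{e} at the continuous level. This follows the blueprint of \cite{Francfort-Mielke05,Mainik-Mielke05} specialized to our convex, finite-dimensional reduced setting.

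First, I would fix a partition $P=\{0=t_0<\dots<t_N=T\}$ of diameter $\tau$ and let $(\epsi_i,z_i)$ denote the unique incremental minimizer of \eqref{return}, whose existence is guaranteed by the uniform convexity of $W_\rho +D(\cdot-z_{i-1})$. The crucial point is to use formulation \eqref{return2}: $(\epsi_{i-1},z_{i-1})$ is itself the minimizer of the incremental functional with data $(\sigma(t_{i-1}),z_{i-1})$. Applying Lemma \ref{cont_dep_const} to the two pairs $(\sigma(t_i),z_{i-1})$ and $(\sigma(t_{i-1}),z_{i-1})$ yields the error-propagation estimate
\begin{equation*}
|\epsi_i-\epsi_{i-1}|^2+|z_i-z_{i-1}|^2\leq \frac{1}{\alpha^2}|\sigma(t_i)-\sigma(t_{i-1})|^2,
\end{equation*}
whence, taking square roots and summing,
\begin{equation*}
\sum_{i=1}^N\bigl(|\epsi_i-\epsi_{i-1}|+|z_i-z_{i-1}|\bigr)\leq \frac{\sqrt{2}}{\alpha}\|\dot\sigma\|_{L^1(0,T;\Rzn)},
\end{equation*}
uniformly in $\tau$. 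In particular $\Diss_D(z^\tau,[0,T])\leq C_D\alpha^{-1}\|\dot\sigma\|_{L^1}$. By a Helly-type selection in finite dimensions, the (say left-continuous) piecewise constant interpolants $(\epsi^\tau,z^\tau)$ admit a subsequence converging pointwise on $[0,T]$ to some $(\epsi,z)\in W^{1,1}(0,T;\Rzn\times\Rzd)$, with $(\epsi(0),z(0))=(\epsi_0,z_0)$.

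For the stability \eqref{s}, I would fix $t\in[0,T]$ and pick $t_{i(\tau)}\to t$; since $(\epsi_{i(\tau)},z_{i(\tau)})\in S(t_{i(\tau)})$, pointwise convergence together with continuity of $W_\rho$, of $D$, and of $\sigma$ allows direct passage to the limit in the defining inequality \eqref{stable} against any fixed test $(\overline\epsi,\overline z)$; no mutual-recovery-sequence construction is needed here because the test set is independent of the time. For the energy balance \eqref{e}, I would exploit the two-sided strategy. Choosing $(\epsi_{i-1},z_{i-1})$ as a competitor in \eqref{return} and performing Abel summation yields the discrete upper bound
\begin{equation*}
W_\rho(\epsi^\tau(t),z^\tau(t))-\sigma(t):\epsi^\tau(t)+\Diss_D(z^\tau,[0,t])\leq W_\rho(\epsi_0,z_0)-\sigma(0):\epsi_0-\int_0^{t}\dot\sigma(s):\overline\epsi^\tau(s)\,ds,
\end{equation*}
whose $\tau\to 0$ limit gives the $\leq$ direction of \eqref{e} thanks to lower semicontinuity of $\Diss_D$ under pointwise convergence and dominated convergence for the work integral (justified by the uniform $W^{1,1}$-bound on $\epsi^\tau$). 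The reverse $\geq$ inequality is obtained by testing the already established continuous stability \eqref{s} at $t_{j-1}$ against $(\epsi(t_j),z(t_j))$ along arbitrary partitions of $[0,t]$, summing, and letting the diameter tend to zero, using the $W^{1,1}$ regularity to identify the Riemann sums of $\dot\sigma:\epsi$ with the integral.

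The main technical hurdle I anticipate is the rigorous passage to the limit in the discrete work term $\int_0^t\dot\sigma:\overline\epsi^\tau\,ds$ and the matching of the two energy inequalities; both are handled cleanly once the error-propagation estimate has been promoted, via summation, to a uniform $W^{1,1}$-bound on the interpolants, so that dominated convergence and the absolute continuity of $\sigma$ can be freely invoked.
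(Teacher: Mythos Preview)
Your proposal is correct and follows the same overall time-discretization / Helly-selection / two-sided energy estimate scheme as the paper, but with one genuine difference worth noting: you obtain the uniform $W^{1,1}$-type bound \emph{at the discrete level}, by applying Lemma~\ref{cont_dep_const} with data $(\sigma(t_i),z_{i-1})$ versus $(\sigma(t_{i-1}),z_{i-1})$ (the second minimizer being $(\epsi_{i-1},z_{i-1})$ thanks to \eqref{return2} and the stability of the initial datum), so that the $D$-term drops out and you get $|\epsi_i-\epsi_{i-1}|+|z_i-z_{i-1}|\leq \alpha^{-1}\sqrt{2}\int_{t_{i-1}}^{t_i}|\dot\sigma|$ directly. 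The paper instead derives only the coarser energy/dissipation bound \eqref{bound2} via a discrete Gronwall argument, passes to the limit, and then proves absolute continuity \emph{at the continuous level} via the estimate \eqref{lippa}. Your route is shorter and avoids Gronwall; it also immediately gives the uniform modulus of absolute continuity that justifies both the pointwise compactness and the dominated-convergence step in the work integral. One small caveat: Helly's principle alone only yields a $BV$ limit, so be explicit that the step bound above gives a uniform absolute-continuity modulus controlled by $\int|\dot\sigma|$, which passes to the pointwise limit and yields $(\epsi,z)\in W^{1,1}$.
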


\begin{proof}
  Let us choose a sequence of partitions $\, P^n:=\{0=t_0^n<t_i^n<\dots<t_{N^n-1}^n<t_{N^n}^n=T\} \,$ with diameters $\, \tau^n=\max_{i=1,\dots,N^n}(t_i^n - t_{i-1}^n) \,$ going to zero. Owing to the above discussion, we uniquely determine a sequence of solutions $\, \{(\epsi_i^n,z_i^n)\}_{i=0}^{N^n}\,$ to the corresponding incremental problems \eqref{return} such that $\, (\epsi^n_0,z^n_0)=(\epsi_0,z_0)$.
We shall denote by  $\, (\epsi^n , z^n)\,$ the {\it incremental solution}, i.e. the right-continuous piecewise-constant interpolant of  $\, \{(\epsi_i^n,z_i^n)\}_{i=0}^{N^n}\,$ on the partition $\, P^n$, and by $\, \tau^n, s^n:[0,T] \rightarrow [0,T] \,$ the functions $\, \tau^n(t):= t^n_i\,$ for $\, t \in (t^n_{i-1},t^n_i]$, and $\, s^n(t):=t^n_{i-1} \,$  for $\, t \in [t^n_{i-1},t^n_i)$,$\, i=1 \, \dots, N^n$.

Since $\,\{ (\epsi_i^n,z_i^n)\}_{i=0}^{N^n}\,$ solves \eqref{return2} with $\, z_i^n \,$ replacing $\, z_i$, one directly gets that $\, (\epsi_i^n,z_i^n) \in S(t_i^n)\,$ for all $\, i=1, \dots, N^n$. Moreover, from \eqref{return} and the minimality of $\, (\epsi^n_i,z^n_i)$, we compute that
\begin{gather}
%\int_{t_{i-1}^n}^{t^n_i}\C\dot\sigma(s): \epsi_i^n \, ds \nonumber\\
 W_\rho( \epsi_i^n,z_i^n) -  \sigma(t_i^n): \epsi_i^n - W_\rho( \epsi_{i-1}^n,z_{i-1}^n) + \sigma(t^n_{i-1}): \epsi^n_{i-1}\nonumber\\
 + D(z_i^n - z_{i-1}^n)
 \leq  - (\sigma(t_i^n)-\sigma(t_{i-1}^n)): \epsi_{i-1}^n.\nonumber
\end{gather}
Next, taking the sum of the latter relation for $\, i = 1, \dots, m\,$ and $\, m \leq N^n$, we get that
\begin{gather}
 W_\rho( \epsi_m^n,z_m^n) -  \sigma(t_m^n): \epsi_m^n - W_\rho( \epsi_{0},z_{0}) + \sigma(0): \epsi_{0}\nonumber\\
 + \sum_{i=1}^m D(z_i^n - z_{i-1}^n)
 \leq - \int_{0}^{t^n_m} \dot \sigma:  \epsi^n \, ds.\label{poi}
\end{gather}
 Hence, it suffices to apply the discrete Gronwall lemma and exploit the coercivity of $\, W_\rho \,$ in order to check that
\begin{equation}
  \label{bound2}
  \sup_{t \in [0,T]} W_\rho( \epsi^n(t),  z^n(t)) \ \ \text{and} \ \  \Diss_D( z^n,[0,T]) \ \ \text{are bounded independently of} \ \ n.
\end{equation}
Indeed, the latter bound depends on $\, W_\rho(\epsi_0,z_0) \,$ and $\, \| \sigma\|_{W^{1,1}(0,T;\Rzn)}\,$ only. 

In order to pass to the limit with $\, n \,$ we exploit Helly's selection principle and find a (not relabeled) subsequence of partitions and a non-decreasing function $\, \phi:[0,T] \rightarrow [0,+\infty)\,$ such that
\begin{gather}
   z^n(t) \rightarrow z(t), \quad
 \Diss_D( z^n,[0,t])\rightarrow \phi(t) \quad \text{for all} \ \ t \in [0,T],\label{propf}\\
\text{and} \ \ \Diss_D(z,[s,t]) \leq \phi(t) - \phi(s) \quad \forall [s,t]\subset [0,T].\label{propfi}
\end{gather}
Consequently, for all $\, t \in [0,T]$, we readily find the unique limit $\, \epsi(t)= {\cal L} z(t)\,$ since $\, \epsi^n(t)={\cal L}z_n(t) \rightarrow {\cal L} z(t)$.

Next, we check that $\, \S \,$ is closed. Indeed, let the sequence $\, (t_k,\epsi_k,z_k) \in \S \,$ converge to $\, (t,\epsi,z)\,$ in $\, [0,T]\times \Rzn \times \Rzd$. Then, since $\, W_\rho \,$ is lower semicontinuous and $\, \sigma \,$ is continuous, for all $\, (\overline \epsi, \overline z) \in \Rzn \times \Rzd$,
\begin{gather}
 W_\rho( \epsi,z) - \sigma(t):\epsi \leq \liminf_{k\rightarrow +\infty}\big(W_\rho( \epsi_k,z_k) - \sigma(t_k) : \epsi_k\big) \nonumber\\
\leq \liminf_{k\rightarrow +\infty} \big(W_\rho(\overline \epsi, \overline z) -\sigma(t_k) : \overline \epsi  + D(\overline z - z_k)\big) =W_\rho(\overline \epsi, \overline z)- \sigma(t):\overline \epsi + D(\overline z - z).\nonumber
\end{gather}
Namely $\, (t,\epsi,z )\in \S$. We shall exploit the latter closure property in order to prove that $\, (\epsi(t),z(t)) \,$ is a stable state. Indeed, recalling that $\, t \in [0,T]\,$ is fixed, one readily checks that the sequence $\, \tau^n(t) \,$ converges to $\, t\,$ and is such that $\, ( \epsi^n(\tau^n(t)),  z^n(\tau^n(t)))\,$ converges to $\, (\epsi(t),z(t))\,$ by definition. Hence, relation \eqref{s} follows since $\, ( \tau^n(t),\epsi^n(\tau^n(t)),  z^n(\tau^n(t))) \in \S$. In particular, we have proved that $\, (\epsi(t),z(t))\,$ solves (see \eqref{return2})
 $$(\epsi(t),z(t))\in\argmin_{(\epsi,z) \in \Rzn \times \Rzd}\big(W_\rho(\epsi,z)   -  \sigma(t):\epsi +D(z - z(t))\big).$$
Moreover, by construction, we have $\, (\epsi(0),z(0))=(\epsi_0,z_0)$.

We are left to prove that indeed $\, (\epsi,z)\,$ fulfills the energy identity \eqref{e}. 
Relation \eqref{poi} can be rewritten as
\begin{gather}
W_\rho( \epsi^n(t), z^n(t)) - \sigma(\tau^n(t)): \epsi^n(t) 
+ \Diss_D(z^n,[0,\tau^n(t)])\nonumber\\
 \leq W_\rho( \epsi_0,z_0) -\sigma(0):\epsi_0 -\int_{0}^{\tau^n(t)} \dot \sigma: \epsi^n\, ds.\label{preupper}
\end{gather}
Hence, passing to the $\, \liminf\, $ in the latter relation and exploiting once again the lower semicontinuity of $\, W_\rho$, the integrability of $\, \dot \sigma$, the boundedness of $\, \epsi^n\,$ (see \eqref{bound2}), and \eqref{propfi}, we readily check by Lebesgue dominated convergence that
\begin{gather}
 W_\rho(\epsi(t),z(t)) - \sigma(t):\epsi(t) + \Diss_D(z,[0,t]) \nonumber\\
\leq W_\rho(\epsi_0,z_0)- \sigma(0):\epsi_0 - \int_0^t \dot \sigma: \epsi \, ds.\label{upper}
\end{gather}

Some more precise convergence for the energy can be deduced.
Indeed, from the stability condition $\, (\epsi^n(t),z^n(t))\in S(s^n(t)) $, the lower semicontinuity of $\, W_\rho$, and the continuity of $\, \sigma \,$ one checks that
\begin{gather}
 W_\rho(\epsi(t),z(t))- \sigma(t): \epsi(t) = \lim_{n\rightarrow +\infty}\big(W_\rho(\epsi(t),z(t))- \sigma(s^n(t)): \epsi(t) + D(z(t) -  z^n(t)) \big)\nonumber\\
 \geq \limsup_{n\rightarrow +\infty}\big( W_\rho(\epsi^n(t),z^n(t)) - \sigma(s^n(t)): \epsi^n(t)\big)\geq W_\rho(\epsi(t),z(t))- \sigma(t): \epsi(t). \label{lim}
\end{gather}
In particular, we have proved that $\, W_\rho(\epsi^n(t),z^n(t))\,$ converges to $\, W_\rho(\epsi(t),z(t))$.

Our next step will be that of proving that $\, (\epsi,z) \,$ is absolutely continuous. Indeed this follows at once from the stability condition \eqref{s}, the upper energy estimate \eqref{upper}, the uniform convexity of $\, W_{\rho}$, and the absolute continuity of $\, \sigma$. Let us fix $\, [s,t] \subset [0,T]$. Owing to $\, (\epsi(s),z(s))\in S(s)$ and the uniform convexity of $\, W_\rho\,$ with constant $\, \alpha\,$ one readily gets that
  \begin{eqnarray}
    &&\alpha |\epsi(t) - \epsi(s)|^2 + \alpha |z(t) - z(s)|^2\nonumber\\ 
&&\leq W_\rho(\epsi(t), z(t)) - \sigma(s):\epsi(t) +D(z(t) - z(s))
 - W_\rho(\epsi(s),z(s)) + \sigma(s):\epsi(s)\nonumber\\
&&\leq W_\rho(\epsi(t), z(t)) - \sigma(t):\epsi(t) + \Diss_D(z,[s,t])\nonumber\\
&&  - W_\rho(\epsi(s),z(s)) + \sigma(s):\epsi(s)- (\sigma(s) - \sigma(t)):\epsi(t)\nonumber\\
&&\leq-\int_s^t\dot\sigma(r):(\epsi(r)- \epsi(t)) \, dr.\nonumber
  \end{eqnarray}
Hence, by means of Gronwall's lemma, one checks that
\begin{equation}\label{lippa}
 |\epsi(t) - \epsi(s)| +|z(t) - z(s)| \leq c_4 \int_s^t|\dot \sigma|,
\end{equation}
where the positive constant $\, c_4 \,$ depends just on $\, \alpha$. The absolute continuity of $\, \epsi\,$ and $\, z \,$ follows.

We are now in the position of proving the converse inequality to \eqref{upper}, namely, the lower energy estimate. Indeed, for all $\, t \in [0,T]$,
\begin{gather}
 W_\rho(\epsi(t),z(t)) - \sigma(t) : \epsi(t) + \Diss_D(z,[0,t]) \nonumber\\
\geq  W_\rho(\epsi_0,z_0)  - \sigma(0):\epsi_0 - \int_0^t  \dot \sigma :\epsi \, ds.\label{lower}
\end{gather}
Indeed, let suitable partitions $\, Q^m=\{0=s^m_0<s^m_1< \dots< s^m_{M^m-1}<s^m_{M^m}=t\}\,$ be given such that the diameters $\, \max_{j=1,\dots,M^m}(s^m_{j}- s^m_{j-1})\,$ go to zero. By exploiting again the stability  $\,  (\epsi(s^m_{j-1}),z(s^m_{j-1})) \in S(s^m_{j-1}) \,$ for $\, j=1, \dots, M^m$, we obtain that
\begin{gather}
 W_\rho(\epsi(s^m_j),z(s^m_j))  - \sigma(s^m_j):\epsi(s^m_j)+ D(z(s^m_j) - z(s^m_{j-1}))\nonumber\\
 \geq W_\rho(\epsi(s^m_{j-1}),z(s^m_{j-1}))  - \sigma(s^m_{j-1}):\epsi(s^m_{j-1})- (\sigma(s^m_{j}) -\sigma(s^m_{j-1})) :\epsi(s^m_j)\nonumber
\end{gather}
We shall take the sum above for $\, j=1, \dots,M^m \,$ and obtain that 
\begin{gather}
 W_\rho(\epsi(t),z(t)) - \sigma(t) : \epsi(t) + \Diss_D(z,[0,t]) \nonumber\\
\geq  W_\rho(\epsi_0,z_0)  - \sigma(0):\epsi_0 - \sum_{j=1}^{M^m} (\sigma(s^m_{j}) - \sigma(s^m_{j-1})) :\epsi(s_j^m).\label{lower2}
\end{gather}
Then, relation \eqref{lower} follows at once from Lebesgue dominated convergence since
$$-\sum_{j=1}^{M^m} (\sigma(s^m_{j}) - \sigma(s^m_{j-1})) :\epsi(s_j^m) = -\int_0^t \left(-\!\!\!\! \!\!\int_{Q^m}\dot \sigma \,dr\right)(s): \epsi(\tau^m(s)) \,ds,$$
where we used a standard notation for the piecewise mean on the partition $\, Q^m$.
In fact, $\, \epsi \circ \tau^m \,$  and $\, -\!\!\!\! \!\!\int_{Q^m}\dot \sigma \,dr\,$ converge to $\, \epsi \,$ and $\,\dot \sigma \,$ at least almost everywhere, respectively, and $\, \epsi\circ \tau^m \,$ is uniformly bounded. Once \eqref{lower} is established, it is a standard matter to check that indeed $\, \Diss_D(z,[0,t])= \phi(t) \,$ for all $\, t \in [0,T]$.
\end{proof}

Finally, \rosso a consequence of \eqref{lippa} is \nero the following Lipschitz regularity result.

\begin{corollary}[Lipschitz continuity]
  Under the assumptions of Theorem \emph{\ref{const}}, if \linebreak$ \sigma \in W^{1,\infty}(0,T;\Rzn)$, then we have $\, (\epsi,z) \in W^{1,\infty}(0,T;\Rzn\times \Rzd)$.
\end{corollary}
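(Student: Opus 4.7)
The plan is essentially to read off the conclusion from estimate \eqref{lippa}, which was established inside the proof of Theorem \ref{const} under the weaker assumption $\sigma \in W^{1,1}(0,T;\Rzn)$. Since $W^{1,\infty}(0,T;\Rzn) \subset W^{1,1}(0,T;\Rzn)$, Theorem \ref{const} applies and yields an energetic solution $(\epsi,z)$ for which \eqref{lippa} holds on every subinterval $[s,t] \subset [0,T]$, with a constant $c_4$ depending only on the uniform convexity constant $\alpha$ of $W_\rho$.

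Now I would simply bound the right-hand side of \eqref{lippa}: from $\dot\sigma \in L^\infty(0,T;\Rzn)$ we obtain
$$\int_s^t |\dot\sigma(r)|\, dr \leq (t-s)\, \|\dot\sigma\|_{L^\infty(0,T;\Rzn)},$$
so that
$$|\epsi(t) - \epsi(s)| + |z(t) - z(s)| \leq c_4\, \|\dot\sigma\|_{L^\infty(0,T;\Rzn)}\, (t-s)$$
for all $[s,t] \subset [0,T]$. This is a global Lipschitz bound for the pair $(\epsi,z)$ as a map into the finite-dimensional space $\Rzn \times \Rzd$.

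Finally, I would invoke the standard equivalence between Lipschitz continuity and $W^{1,\infty}$-regularity for absolutely continuous functions with values in a finite-dimensional space (or, equivalently, Rademacher's theorem in one variable): the Lipschitz estimate implies $(\dot\epsi,\dot z) \in L^\infty(0,T;\Rzn\times\Rzd)$, hence $(\epsi,z) \in W^{1,\infty}(0,T;\Rzn \times \Rzd)$. There is no real obstacle here; the statement is a transparent corollary of the quantitative modulus of continuity \eqref{lippa}, which is why the authors record it as a corollary rather than as a theorem.
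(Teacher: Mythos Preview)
Your proposal is correct and is exactly the argument the paper has in mind: the corollary is stated immediately after the paper observes that it is ``a consequence of \eqref{lippa},'' and no further proof is given. You have simply made explicit the one-line step of bounding $\int_s^t |\dot\sigma|$ by $(t-s)\|\dot\sigma\|_{L^\infty}$.
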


We shall complement the above detailed existence analysis by providing a local Lipschitz continuous dependence result for the smooth case $\, \rho >0\,$ (see \cite[Thm. 7.4]{Mielke-Theil04}).

\begin{theorem}[Continuous dependence for $\, \boldsymbol \rho \boldsymbol >\boldsymbol 0$]\label{con_dep_lemma}
Let the assumptions of Theorem~\emph{\ref{const}} hold $\, \rho >0$, $\, \sigma_1, \, \sigma_2 \in W^{1,1}(0,T;\Rzn)$, suitably stable initial data $\, (\epsi_{0,1},z_{0,1})\, $ and $\, (\epsi_{0,2},z_{0,2})\,$ be given and $\, (\epsi_1, z_1 ) \,$ and $\, (\epsi_2,z_2)\,$ be two corresponding energetic solutions to \eqref{s}-\eqref{e}. Then, there exists a positive constant $\, c \,$ depending only on $\, \alpha $,\rosso\
 the bound and the Lipschitz constant of $\, \nabla^2 W_\rho$,
\nero and $\, \| \sigma_i\|_{W^{1,1}(0,T;\Rzn)}\,$ for $\, i=1,2\,$ such that 
\begin{gather}
  | (\epsi_1 - \epsi_2)(t)|^2 + | (z_1- z_2)(t)|^2  \nonumber\\
\leq c\left(| \epsi_{0,1} - \epsi_{0,2}|^2 + | z_{0,1}- z_{0,2}|^2 + \|\sigma_1 - \sigma_2\|^2_{W^{1,1}(0,t;\Rzn)}\right) \quad \forall t \in [0,T]. \label{cont_dep1}
\end{gather}
\end{theorem}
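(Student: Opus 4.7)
The plan is to adapt the abstract continuous-dependence argument of \cite[Thm.~7.4]{Mielke-Theil04} to the present concrete setting, relying crucially on the smoothness of $W_\rho$ for $\rho>0$ (namely \eqref{F_rho}) and on the Lipschitz-in-time regularity of the two solutions afforded by the preceding Corollary. A standard density argument will allow us to reduce the $W^{1,1}$ case to $W^{1,\infty}$, so that we may work a.e.\ with genuine time derivatives.

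First I would eliminate the elastic strain. Testing the stability condition in the $\epsi$-direction at time $t$ yields $\partial_\epsi W_\rho(\epsi_i(t),z_i(t))=\sigma_i(t)$ for $i=1,2$. Since $\nabla^2 W_\rho$ is bounded and uniformly elliptic with constant $\alpha$, the integral mean-value identity applied to
$\sigma_1-\sigma_2=\partial_\epsi W_\rho(\epsi_1,z_1)-\partial_\epsi W_\rho(\epsi_2,z_2)$ yields, writing $\Delta(\cdot):=(\cdot)_1-(\cdot)_2$,
$$|\Delta\epsi(t)|^2 \leq c_1\big(|\Delta z(t)|^2 + |\Delta\sigma(t)|^2\big).$$
This reduces the task to producing a corresponding bound on $|\Delta z(t)|^2$.

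Next I would pass to the rate form. The Lipschitz regularity, combined with the equivalence of the energetic formulation with \eqref{evol} already established in this section, gives $-\partial_z W_\rho(\epsi_i(t),z_i(t))\in \partial D(\dot z_i(t))$ a.e.\ in $[0,T]$, and monotonicity of $\partial D$ yields $(\partial_z W_\rho(\epsi_1,z_1) - \partial_z W_\rho(\epsi_2,z_2))\cdot (\dot z_1 - \dot z_2) \leq 0$. I would then introduce the symmetric cross-energy
$$\varphi(t):=\tfrac12\big(\nabla W_\rho(\epsi_1,z_1) - \nabla W_\rho(\epsi_2,z_2)\big)\cdot (\Delta\epsi,\Delta z),$$
for which $\alpha$-convexity of $W_\rho$ gives $\varphi(t)\geq \alpha(|\Delta\epsi(t)|^2+|\Delta z(t)|^2)$. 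Differentiating in $t$, the $z$-$z$ contribution to $\dot\varphi$ is absorbed by the monotonicity inequality; the $\epsi$-$\epsi$ contribution reduces to $(\dot\sigma_1-\dot\sigma_2)\cdot\Delta\epsi$ upon differentiating the identity $\partial_\epsi W_\rho(\epsi_i,z_i)=\sigma_i$; the mixed third-order remainders are controlled using the boundedness and Lipschitz continuity of $\nabla^2 W_\rho$ together with Step~1. Collecting these contributions delivers a differential inequality
$$\dot\varphi(t) \leq C\,\varphi(t) + g(t),$$
with $g$ linear in $|\Delta\sigma(t)|$ and $|\dot\sigma_1(t)-\dot\sigma_2(t)|$ and $C$ depending only on $\alpha$, the bounds on $\nabla^2 W_\rho$ and the $W^{1,1}$-norms $\|\sigma_i\|_{W^{1,1}(0,T;\Rzn)}$. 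Gronwall's inequality on $[0,t]$, combined with Step~1 and the initial estimate $\varphi(0)\leq C_2(|\epsi_{0,1}-\epsi_{0,2}|^2+|z_{0,1}-z_{0,2}|^2)$, then yields \eqref{cont_dep1}.

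The main obstacle is this rate step: the monotonicity of $\partial D$ naturally produces an inequality in the velocity differences $\dot z_1-\dot z_2$, whereas the estimate one needs is on the state difference $\Delta z$. Bridging these two levels is what forces both the full smoothness of $W_\rho$ (so that $\nabla^2 W_\rho$ can be inverted in the elastic block and Lipschitz-expanded in the $z$-block) and the introduction of the symmetric functional $\varphi$, whose time derivative places $\Delta\epsi,\Delta z$ on the left and $\dot z_1-\dot z_2,\dot\sigma_1-\dot\sigma_2$ on the right in the right combination. This is precisely the content of hypothesis \eqref{F_rho} and the reason the continuous dependence statement is restricted to $\rho>0$.
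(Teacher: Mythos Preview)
Your overall strategy---define the cross-energy $\varphi(t)=(\nabla W_\rho(y_1)-\nabla W_\rho(y_2))\cdot(y_1-y_2)$ with $y_i=(\epsi_i,z_i)$, differentiate in time, feed in the rate form of the flow rule, exploit the $C^{2,1}$ regularity of $W_\rho$, and close via Gronwall---is exactly what the paper does. The preliminary density reduction to $W^{1,\infty}$ data and the elimination of $\epsi$ via $\partial_\epsi W_\rho(\epsi_i,z_i)=\sigma_i$ are both unnecessary: the paper works directly with the $W^{1,1}$ solutions supplied by Theorem~\ref{const} and keeps $(\epsi,z)$ coupled throughout.

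There is, however, a genuine gap in your differentiation step. When you compute $\dot\varphi$ by the Leibniz rule, besides the ``good'' monotone term $(\nabla W_1-\nabla W_2)\cdot(\dot y_1-\dot y_2)$ you obtain $(\nabla^2 W_1\,\dot y_1-\nabla^2 W_2\,\dot y_2)\cdot\overline y$. Splitting this naively yields a piece $(\nabla^2 W_2\,\dot{\overline y})\cdot\overline y$ which is of order $|\dot{\overline y}|\,|\overline y|$, not $|\overline y|^2$; this is not a ``third-order remainder'' and cannot be absorbed into $C\varphi+g(t)$ with $g$ depending only on $\Delta\sigma,\dot{\Delta\sigma}$. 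The paper's fix is a specific doubling trick: one writes
\[
\nabla^2 W_i\,\overline y \;=\; (\nabla W_1-\nabla W_2)\;+\;\big(\nabla^2 W_i\,\overline y-(\nabla W_1-\nabla W_2)\big),
\]
so that the bracketed difference is a genuine Taylor remainder of order $|\overline y|^2$ (here the Lipschitz bound on $\nabla^2 W_\rho$ enters), while the first summand \emph{doubles} the monotone term. This yields
\[
\dot\gamma \;\le\; 2(\nabla W_1-\nabla W_2)\cdot(\dot y_1-\dot y_2)\;+\;c\big(|\dot y_1|+|\dot y_2|\big)\,|\overline y|^2
\;\le\; 2\,\overline\sigma:\dot{\overline\epsi}\;+\;c\big(|\dot y_1|+|\dot y_2|\big)\,\gamma,
\]
using \eqref{serve}. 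Note that the resulting inequality is \emph{not} of the form $\dot\varphi\le C\varphi+g$: the term $2\,\overline\sigma:\dot{\overline\epsi}$ still contains $\dot{\overline\epsi}$ and must be handled after integration by an integration by parts in time (producing $\overline\sigma(t):\overline\epsi(t)$, absorbed into the left, and $\int_0^t\dot{\overline\sigma}:\overline\epsi$). Only then does Gronwall apply, with the integrating factor coming from $|\dot y_1|+|\dot y_2|\in L^1$.
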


\begin{proof}
  Let us start by introducing some convenient notation. In particular, let
\begin{gather}
 y_i:= \binom{\epsi_i}{z_i}, \quad \nabla W_i := \binom{\partial_\epsi W_\rho(\epsi_i,z_i)}{\partial_z W_\rho(\epsi_i,z_i)},\nonumber\\
 \nabla^2W_i := \binom{\partial_{\epsi\epsi} W_\rho(\epsi_i,z_i)\quad\partial_{\epsi z} W_\rho(\epsi_i,z_i)}{\partial_{\epsi z} W_\rho(\epsi_i,z_i)\quad\partial_{z z} W_\rho(\epsi_i,z_i)} \quad \text{for} \ \ i=1,2.\nonumber
\end{gather}
Next, by exploiting the above mentioned equivalence between \eqref{s}-\eqref{e} and \eqref{evol2}, one readily checks that
\begin{equation}
  \label{serve}
  (\nabla W_1 - \nabla W_2) \cdot (\dot y_1 - \dot y_2) \leq (\sigma_1 - \sigma_2):(\dot \epsi_1 - \dot \epsi_2) \quad \text{a.e. in} \ \ (0,T),
\end{equation}
where of course $\, \cdot \,$ is the scalar product in $\, \Rzn \times \Rzd$. Moreover, we shall use $\,  \overline \epsi := \epsi_1 - \epsi_2$, $\,  \overline z:= z_1 - z_2\,$ and so on. Within this proof, the symbol $\, c \,$ will denote any positive constant possibly depending on $\, \alpha ,\, \| W_\rho\|_{C^{2,1}(\Rzn\times \Rzd)}$, and on $\, \| \sigma_i\|_{W^{1,1}(0,T;\Rzn)}\,$ for $\, i=1,2$. Let us define 
\begin{gather}
 \gamma:= \overline{\partial_\epsi W}:\overline \epsi + \overline{\partial_z W}:\overline z
\geq \alpha|\overline \epsi|^2 + \alpha|\overline z|^2 = \alpha |\overline y|^2,\nonumber
\end{gather}
where we also used the uniform convexity of $\, W_\rho$. Now, by differentiating $\, \gamma \,$ with respect to time and exploiting the smoothness of $\, W_\rho $, one gets that
\begin{eqnarray}
  \dot \gamma &=& (\nabla W_1 - \nabla W_2 + \nabla^2W_1\overline y)\cdot \dot y_1 - (\nabla W_1 - \nabla W_2 + \nabla^2W_2\overline y)\cdot \dot y_2\nonumber\\
&\leq& 2 (\nabla W_1 - \nabla W_2) \cdot (\dot y_1 - \dot y_2) \nonumber\\
&+& |-\nabla W_1 + \nabla W_2 + \nabla^2W_1\overline y| \, |\dot y_1|+ |-\nabla W_2 + \nabla W_1 - \nabla^2W_2\overline y| \, |\dot y_2|\nonumber\\
&\leq&  2 {\overline \sigma}: \dot {\overline \epsi} + c (|\dot y_1|+|\dot y_2|) |\overline y|^2\quad \text{a.e. in} \ \ (0,T).\nonumber
\end{eqnarray}
By collecting the above computation we check that, for all $\, t \in [0,T]$,
\begin{gather}
\gamma(t) = \gamma(0) +\int_0^t \gamma\, ds\leq \gamma(0) + 2\overline \sigma(t) : \overline \epsi(t) - 2\overline \sigma(0) : \overline \epsi_0 -2 \int_0^t \dot {\overline \sigma}: \overline \epsi\, ds+ c \int_0^t (|\dot y_1| +|\dot y_2|)\, \gamma\, ds\nonumber\\
\leq \frac12 \gamma(t) + c\left(| \overline \epsi_{0}|^2 + | \overline z_{0}|^2 + |\overline \sigma(t)|^2+ |\sigma(0)|^2 + \int_0^t (|\dot y_1| +|\dot y_2|)\gamma\, ds\right).\nonumber
\end{gather}
 The assertion follows by Gronwall's lemma.
\end{proof}

\paragraph{Properties of the approximations.} The above detailed existence proof exploits a discrete construction which is interesting in itself. Let us condense in the following lemma the above proved results on the discrete scheme. Note that the result is less sharp for $\, \rho=0\,$ since we do not know whether the solutions are unique in this case.

\begin{lemma}[Convergence]\label{dis}
  Under the assumptions of Theorem \emph{\ref{const}}, the incremental solutions $\, ( \epsi^n,  z^n)\,$ of problem \eqref{return} for partitions $\, P^n\,$ with diameters $\, \tau^n\,$ going to zero are such that, possibly extracting a not relabeled subsequence, for all $\, t \in [0,T]$,
\begin{eqnarray}
  &&  z^n \rightarrow z\quad \text{uniformly in}  \ \ [0,T] ,\nonumber\\
&& \Diss_D(z^n,[0,t]) \rightarrow \Diss_D(z,[0,t]), \nonumber\\
 && \epsi^n(t) \rightarrow \epsi(t),\nonumber\\
&&W_\rho(\epsi^n(t), z^n(t)) \rightarrow W_\rho( \epsi(t), z(t)),\nonumber
\end{eqnarray}
for some pair $\, (\epsi,z) \,$ which solves \eqref{s}-\eqref{e}. As $\, \rho >0 \,$ the whole sequence $\, ( \epsi^n,  z^n)\,$  converges.
\end{lemma}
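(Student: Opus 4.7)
My plan is to harvest most of the claimed convergences directly from the proof of Theorem~\ref{const}, and then supply two extra ingredients: the upgrade from pointwise to uniform convergence of $z^n$, and --- for $\rho>0$ --- the removal of the subsequence via uniqueness. The proof of Theorem~\ref{const} already produced, through Helly's selection principle, a not relabeled subsequence such that $z^n(t)\to z(t)$ and $\Diss_D(z^n,[0,t])\to\phi(t)$ for every $t\in[0,T]$ (see \eqref{propf}), and the end of that argument identified $\phi(t)=\Diss_D(z,[0,t])$. Since in the setting of this section ${\cal L}=\mathrm{id}$, the convergence $\epsi^n(t)={\cal L} z^n(t)\to{\cal L} z(t)=\epsi(t)$ is immediate, and the convergence of the energies $W_\rho(\epsi^n(t),z^n(t))\to W_\rho(\epsi(t),z(t))$ is precisely \eqref{lim}. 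I would simply quote these facts.

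For the uniform convergence of $z^n$ the key ingredient is the non-degeneracy $c_D|a|\leq D(a)$ of the dissipation: for every $0\leq s\leq t\leq T$ and every $n$,
\begin{equation*}
c_D\,|z^n(t)-z^n(s)|\;\leq\; D\big(z^n(t)-z^n(s)\big)\;\leq\;\Diss_D(z^n,[s,t])\;=\;\Diss_D(z^n,[0,t])-\Diss_D(z^n,[0,s]).
\end{equation*}
Since \eqref{lippa} forces $z$ to be absolutely continuous, the non-decreasing map $\phi(\cdot)=\Diss_D(z,[0,\cdot])$ is continuous on the compact interval $[0,T]$. The monotone non-decreasing functions $\Diss_D(z^n,[0,\cdot])$ converge pointwise to this continuous limit, so by the classical theorem of P\'olya the convergence is automatically uniform on $[0,T]$. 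Inserting this into the inequality above yields a uniform-in-$n$ modulus of continuity for the family $\{z^n\}$, and combined with pointwise convergence on a countable dense subset of $[0,T]$ this delivers $z^n\to z$ uniformly on $[0,T]$ through a standard $\varepsilon/3$ argument.

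Finally, when $\rho>0$ the continuous dependence estimate of Theorem~\ref{con_dep_lemma}, applied with identical data and identical initial datum, gives uniqueness of the energetic solution issuing from $(\epsi_0,z_0)$. Consequently every subsequence of $\{(\epsi^n,z^n)\}$ contains a further subsequence converging, in the above senses, to the same limit $(\epsi,z)$, and a standard subsequence argument then yields convergence of the whole sequence. The only genuinely new step is therefore the uniform convergence of $z^n$, whose critical input is the continuity of $\phi$; this in turn rests upon the absolute continuity of $z$ provided by \eqref{lippa}.
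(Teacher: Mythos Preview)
Your proposal is correct and aligns with the paper's approach: the paper does not give a separate proof of this lemma but simply introduces it with the sentence ``Let us condense in the following lemma the above proved results on the discrete scheme,'' i.e., it regards everything as already established within the proof of Theorem~\ref{const}. You correctly harvest the pointwise convergence of $z^n$, the convergence of $\Diss_D(z^n,[0,t])$ to $\phi(t)=\Diss_D(z,[0,t])$, the convergence $\epsi^n(t)\to\epsi(t)$ via ${\cal L}=\mathrm{id}$, and the energy convergence \eqref{lim} from that proof.

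Where you go beyond the paper is in making explicit two steps the paper leaves tacit. First, the upgrade from pointwise to uniform convergence of $z^n$: your route via the non-degeneracy bound $c_D|z^n(t)-z^n(s)|\leq \Diss_D(z^n,[0,t])-\Diss_D(z^n,[0,s])$, the continuity of $\phi$ (from the absolute continuity of $z$ in \eqref{lippa}), and P\'olya's theorem on monotone sequences is clean and correct; this step is genuinely missing from the paper's argument and your addition is a real improvement in rigor. Second, for $\rho>0$ you invoke Theorem~\ref{con_dep_lemma} to obtain uniqueness and then run the standard subsequence argument; the paper again only states this conclusion without spelling it out. Both additions are exactly what is needed, and your proof is, if anything, more complete than the paper's own treatment.
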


We conclude this section by recalling from \cite{Mielke-Theil04} (see also \cite{Mielke05}) an a priori error estimate of order $\, 1/2\,$ for the above discussed discrete approximations. The latter error bound is however restricted the smooth situation $\, \rho > 0$.

\begin{lemma}[Error]
Under the assumptions of Lemma \emph{\ref{dis}}, let $\, \rho >0$. Then there exists a positive constant $\, c \,$ depending on $\, \alpha$,\rosso\
the bound and the Lipschitz constant of $\, \nabla^2 W_\rho$,
\nero $\, \| W_\rho\|_{C^{2,1}(\Rzn\times \Rzd)}$, $\, (\epsi_0,z_0)$, and $ \,\| \sigma\|_{W^{1,1}(0,T;\Rzn)}\,$ such that
\begin{gather}
  |(\epsi - \epsi^n)(t)| + |(z - z^n)(t)| \leq c(\tau^n)^{1/2} \quad \forall t \in [0,T].
\end{gather}
\end{lemma}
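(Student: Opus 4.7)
The plan is to follow the strategy of \cite{Mielke-Theil04}, based on the one-step continuous dependence estimate of Lemma \ref{cont_dep_const} combined with the Lipschitz-type continuity of the exact solution established in \eqref{lippa}. The starting observation is that at each grid point $\, t_i^n\,$ both the discrete iterate $\, (\epsi_i^n,z_i^n)\,$ and the snapshot $\, (\epsi(t_i^n),z(t_i^n))\,$ of the exact solution are minimizers of the same incremental functional driven by the common stress $\, \sigma(t_i^n)\,$ and differing only in the ``previous state'' against which the dissipation $\, D\,$ is measured: $\, z_{i-1}^n\,$ in the discrete case versus $\, z(t_i^n)\,$ itself in the continuous case (the latter follows from \eqref{return2}, a fact already exploited in the proof of Theorem \ref{const}). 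Applying Lemma \ref{cont_dep_const} with $\, \sigma^1=\sigma^2=\sigma(t_i^n)\,$ then yields the fundamental one-step propagation bound
\[
E_i^2 := |\epsi_i^n-\epsi(t_i^n)|^2+|z_i^n-z(t_i^n)|^2 \leq \frac{4}{\alpha}\,D\big(z_{i-1}^n-z(t_i^n)\big).
\]

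Combining the linear growth $\, D(a)\leq C_D|a|$, the triangle inequality, and estimate \eqref{lippa} on $\, [t_{i-1}^n,t_i^n]\,$ then gives
\[
E_i^2 \leq \frac{4C_D}{\alpha}\left(E_{i-1}+c_4\int_{t_{i-1}^n}^{t_i^n}|\dot\sigma|\,ds\right).
\]
A useful simplification available in the smooth regime $\, \rho>0\,$ is that stationarity of $\, W_\rho\,$ in $\, \epsi\,$ for both problems gives $\, \C(\epsi-z)=\sigma(t_i^n)$, so that $\, \epsi_i^n-\epsi(t_i^n)=z_i^n-z(t_i^n)\,$ and hence $\, E_i=\sqrt{2}\,|z_i^n-z(t_i^n)|$; the recursion thus reduces to a scalar nonlinear inequality. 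Absolute continuity of $\, \sigma\,$ ensures that the driving term $\, \int_{t_{i-1}^n}^{t_i^n}|\dot\sigma|\,ds\,$ goes to zero uniformly in $\, i\,$ as $\, \tau^n\to0$, while the total $\,\sum_i \int_{t_{i-1}^n}^{t_i^n}|\dot\sigma|\,ds=\|\dot\sigma\|_{L^1(0,T)}\,$ remains of order one.

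The main obstacle lies in the quadratic--linear imbalance in the above recursion: a direct discrete Gr\"onwall argument only yields a uniform $\, O(1)\,$ bound, not the desired $\, O((\tau^n)^{1/2})$. To close the argument, one adapts the refined comparison argument of \cite[Thm.~7.4]{Mielke-Theil04}, which leverages the full $\, C^{2,1}\,$ regularity of $\, W_\rho\,$ encoded in \eqref{F_rho} (boundedness and Lipschitz continuity of $\, \nabla^2 W_\rho$) to upgrade the above one-step bound through a careful energetic comparison between the discrete and the continuous trajectories, effectively allowing one to exploit both the uniform $\, \alpha$-convexity and the smoothness of $\, W_\rho\,$ to absorb the quadratic term on the left-hand side. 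Once $\, \max_i E_i\leq c(\tau^n)^{1/2}\,$ is established at the grid points, extension to general $\, t\in[0,T]\,$ is routine: for $\, t\in(t_{i-1}^n,t_i^n]\,$ the piecewise-constant definition of $\,(\epsi^n,z^n)\,$ together with \eqref{lippa} controls $\,|\epsi(t)-\epsi(t_i^n)|+|z(t)-z(t_i^n)|\,$ by $\, c_4\int_{t_{i-1}^n}^{t_i^n}|\dot\sigma|\,ds$, which is absorbed into the claimed rate.
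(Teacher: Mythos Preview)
The paper itself does not prove this lemma: it explicitly states ``We shall not provide here a proof of the above lemma'' and refers the reader to \cite[Thm.~4.3]{Mielke05}, remarking only that the argument there (written for $\sigma\in W^{1,\infty}$) adapts with minor changes to the absolutely continuous case $\sigma\in W^{1,1}$.

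Your proposal therefore cannot be compared against an in-paper proof, but it can be assessed on its own terms, and there is a genuine gap. You correctly set up the comparison via Lemma~\ref{cont_dep_const} and obtain the recursion $E_i^2\leq C(E_{i-1}+\delta_i)$, and you are right that this quadratic--linear imbalance does not close to $(\tau^n)^{1/2}$ by any Gronwall-type argument. The problem is how you propose to repair it: you invoke ``the refined comparison argument of \cite[Thm.~7.4]{Mielke-Theil04}'', but that theorem is the \emph{continuous-time} Lipschitz dependence result (it is precisely what Theorem~\ref{con_dep_lemma} of this paper reproduces), not a time-discretization error estimate. Your description of how the $C^{2,1}$ regularity would ``upgrade the one-step bound'' and ``absorb the quadratic term on the left-hand side'' is a statement of hope rather than an argument; nothing in the continuous-dependence machinery of Theorem~\ref{con_dep_lemma} tells you how to compare a piecewise-constant discrete trajectory with the continuous one.

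In effect your proposal arrives at the same endpoint as the paper---deferring to the literature---but with an incorrect citation and after a detour through Lemma~\ref{cont_dep_const} that you yourself recognize does not work. The correct reference, as the paper notes, is \cite[Thm.~4.3]{Mielke05}; the argument there is not a recycling of the continuous-dependence proof but a direct comparison of the discrete and continuous energy balances in which the $C^{2,1}$ structure enters through a second-order Taylor estimate on the energy increments, producing a recursion that is \emph{linear} in the error and hence amenable to a discrete Gronwall bound with the claimed rate.
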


We shall not provide here a proof of the above lemma. Indeed, in case $\, \sigma \in W^{1,\infty}(0,T;\Rzn)\,$ it suffices to rewrite in the current setting the argument of \cite[Thm. 4.3]{Mielke05}. Moreover, the proof can be adapted with little additional intricacy for the current absolutely continuous case $\, \sigma \in W^{1,1}(0,T;\Rzn)\,$ as well.

%%%%%%%%%%%%%%%%%%%%%%%%%%%section
\section{Incremental minimization for the boundary\\ value problem}\label{incremental}
\setcounter{equation}{0}

In this section we focus on a minimum problem which arises from the time incremental approximation of the quasi-static evolution. Since we are actually dealing with a rate-independent evolution, this minimum problem is of course the basic tool for understanding the phenomenon. Moreover, the study of the time discrete seems to be heavily addressed by the engineering community \cite{Hackl03,Miehe03,Miehe02,Miehe99,Ortiz-Repetto99,Ortiz00,Ortiz99}. Finally, the time incremental situation will turn out to be better suited than the time-continuous one in order to prove convergence of space approximations.
 
The data of the minimum problem are the current value $\, \overline z \in L^2(\Omega, \Rzd)\,$ of the inelastic strain and the updated values $\, \uD \in H^1(\Omega; \Rz^3)\,$ of the boundary displacement and $\, \ell \in ( H^1(\Omega; \Rz^3))' \,$ of the total load. We shall be interested in solving the following 
\begin{gather}
(u,z) \in \argmin_{(v,w) \in \Y(\uD)}\big(\W_{\rho,\nu}(v,w)-\lan \ell, v\ran  + \D(w- \overline z) \big).\label{min}
\end{gather}
 
The existence of minimizers to the latter problem is a straightforward application of the Direct Method of the Calculus of Variations \cite{Dacorogna89}. Indeed, $\, (v,w) \mapsto \W_{\rho,\nu}(v,w) + \D(w -\overline z) -\lan \ell, v\ran \,$ is trivially coercive and lower semicontinuous  with respect to the weak topology in $\, \Y \,$ and $\, \Y( \uD) \,$ is convex and closed. As far as uniqueness is concerned one should observe that $\, \W_{\rho,\nu} \,$ is uniformly convex for all $\, \rho,\, \nu \geq 0$.

Let us state here a preliminary lemma whose proof can be obtained by means of standard computations on the quadratic form $\, \CC$.

\begin{lemma}[Change of boundary conditions]\label{struc} Let $\, \uD, \,  v^{\text{\rm Dir}}  \in H^1(\Omega; \Rz^3)$, $\, \overline z \in L^2(\Omega, \Rzd)$, and $\, \ell \in ( H^1(\Omega; \Rz^3))' \,$ be given. Moreover, let $\, (u^*,z^*)\in \Y( \uD )\,$ solve \eqref{min} and $\, v^*= u^* - \uD +  v^{\text{\rm Dir}} $. Then $\, (v^*,z^*)\,$ solves
  \begin{equation}
    \label{min2}
    (v^*,z^*)\in \argmin_{(v,z) \in \Y( v^{\text{\rm Dir}})}\left(\W_{\rho,\nu}(v,z) + \int_\Omega \C(\epsi(v) - z): \epsi(\uD - v^{\text{\rm Dir}}) -\lan \ell, v\ran + \D(z- \overline z)\right).
  \end{equation}
On the other hand let $\, (v^*,z^*) \,$ solve \eqref{min2}. Then $\, (v^*-v^{\text{\rm Dir}}+ \uD,z^*)\,$ solves \eqref{min}.
\end{lemma}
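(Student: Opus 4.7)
My plan is to show that the substitution $u \leftrightarrow v$ defined by $u = v - v^{\text{\rm Dir}} + \uD$ is an affine bijection between $\Y(\uD)$ and $\Y(v^{\text{\rm Dir}})$ and that, under this substitution, the functionals in \eqref{min} and \eqref{min2} differ only by an additive constant depending neither on $v$ nor on $z$. Since the dissipation term $\D(w-\overline z)$ and the inelastic energy $\F_{\rho,\nu}(w)$ depend only on the inelastic variable, which is unchanged, the whole work reduces to a computation on the quadratic form $\CC$ and a bookkeeping of the linear load term.

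First I would observe that $u \in H^1(\Omega;\Rz^3)$ satisfies $u = \uD$ on $\GammaD$ iff $v = u + v^{\text{\rm Dir}} - \uD$ satisfies $v = v^{\text{\rm Dir}}$ on $\GammaD$, so the map is indeed a bijection $\Y(\uD) \leftrightarrow \Y(v^{\text{\rm Dir}})$ at the $u$-component, with the $z$-component left untouched. Then, using $\epsi(u) = \epsi(v) + \epsi(\uD - v^{\text{\rm Dir}})$ and expanding the quadratic form by bilinearity, I would compute
\begin{equation*}
\CC\bigl(\epsi(u)-z\bigr) = \CC\bigl(\epsi(v)-z\bigr) + \int_\Omega \C\bigl(\epsi(v)-z\bigr):\epsi(\uD - v^{\text{\rm Dir}})\,dx + \CC\bigl(\epsi(\uD - v^{\text{\rm Dir}})\bigr),
\end{equation*}
where the last summand is a constant (it depends on the prescribed data only). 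Similarly, $\langle \ell, u\rangle = \langle \ell, v\rangle + \langle \ell, \uD - v^{\text{\rm Dir}}\rangle$, with the second term again constant.

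Collecting these identities, the functional in \eqref{min} evaluated at $(u,z)$ equals the functional in \eqref{min2} evaluated at $(v,z)$, up to the additive constant $\CC(\epsi(\uD - v^{\text{\rm Dir}})) - \langle \ell, \uD - v^{\text{\rm Dir}}\rangle$. Since this constant is irrelevant for minimization, $(u^*,z^*)$ minimizes the former on $\Y(\uD)$ iff $(v^*,z^*) = (u^* - \uD + v^{\text{\rm Dir}}, z^*)$ minimizes the latter on $\Y(v^{\text{\rm Dir}})$. Both implications stated in the lemma follow simultaneously.

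The argument is essentially arithmetic: there is no real obstacle, the only point requiring care is correctly identifying which terms produced by expanding the quadratic are constant and can be discarded, versus the genuine cross term $\int_\Omega \C(\epsi(v)-z):\epsi(\uD - v^{\text{\rm Dir}})\,dx$ that must remain in the reformulated functional \eqref{min2}.
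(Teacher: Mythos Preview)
Your argument is correct and is precisely the ``standard computations on the quadratic form $\CC$'' that the paper invokes in lieu of a written proof: the affine change of variables, the bilinear expansion of $\CC$, and the identification of the constant remainder are exactly what is meant. There is nothing to add.
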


 Problem \eqref{min} is H\"older continuously stable with respect to perturbations on the data $\, \overline z, \, \uD$, and $\, \ell$. Indeed, we have the following generalization of Lemma \ref{cont_dep_const}.
\begin{lemma}[Continuous dependence]\label{continuous_dependence}
   Let $\, \rho, \, \nu \geq 0\,$ be fixed and $\, \overline z_1,\, \overline z_2 \in L^2(\Omega, \Rzd)$, $\, \uD_1, \, \uD_2 \in H^1(\Omega; \Rz^3)$, and $\, \ell_1, \, \ell_2 \in ( H^1(\Omega; \Rz^3))'$ be given. Moreover, let $\, (u_i,z_i)\in \Y(\uD_i )\,$ solve \eqref{min} with $\, \uD = \uD_i$, $\, \overline z = \overline z_i$, and $\, \ell = \ell_i\,$ for $\, i=1,2$. Then, there exists a constant $\, c \,$ depending on $\, c_0, \alpha$, $\, C_D$,  and $\, \C \, $ such that 
   \begin{gather}\label{cont_dep}
\| u_1 - u_2\|^2_{H^1(\Omega;\Rz^3)} + \| z_1 - z_2 \|^2_{L^2(\Omega;\Rzd)} + \nu\| z_1 - z_2 \|^2_{H^1(\Omega;\Rzd)} \nonumber\\
\leq c \left(\| \uD_1 - \uD_2\|^2_{H^1(\Omega;\Rz^3)} + \|\overline z_1 -\overline z_2 \|_{L^1(\Omega;\Rzd)}+ \|\ell_1 - \ell_2 \|^2_{(H^1(\Omega;\Rz^3))'}\right).
   \end{gather}
\end{lemma}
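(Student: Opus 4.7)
We would adapt the symmetric two-sided testing argument of Lemma~\ref{cont_dep_const} to the boundary-value setting. The first step is a reduction to a common admissible space: applying Lemma~\ref{struc} with reference field $v^{\text{\rm Dir}}=0$, the shifted unknowns $v_i^*:=u_i-\uD_i\in \Y(0)$ are the unique minimizers on $\Y(0)$ of the functionals
\[
\Phi'_i(v,w):=\W_{\rho,\nu}(v,w)+\int_\Omega \C(\epsi(v)-w):\epsi(\uD_i)\,dx-\lan\ell_i,v\ran+\D(w-\overline z_i),
\]
so that $(v_1^*,z_1)$ and $(v_2^*,z_2)$ become admissible competitors for one another.

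The core of the argument is a summed minimality inequality. Each $\Phi'_i$ decomposes as $\A$ plus a convex remainder: the shifted inelastic integrand $F_\rho-c_2|\cdot|^2$ is convex by \eqref{F_rho}, the $\uD_i$-correction and the load term are linear in $(v,w)$, and $\D$ is convex in its argument. Since $\Y(0)$ is a linear subspace, combining Fermat's rule with the quadratic identity $\A(y)-\A(x)-\lan\nabla\A(x),y-x\ran=\A(y-x)$ yields, for $(i,j)\in\{(1,2),(2,1)\}$,
\[
\Phi'_i(v_j^*,z_j)-\Phi'_i(v_i^*,z_i)\geq \alpha X,\qquad X:=\|v_1^*-v_2^*\|^2_{H^1(\Omega;\Rz^3)}+\|z_1-z_2\|^2_{L^2(\Omega;\Rzd)}+\nu\|z_1-z_2\|^2_{H^1(\Omega;\Rzd)},
\]
where Korn's inequality \eqref{korn} on $\Y(0)$ converts control of $\epsi(v_1^*-v_2^*)$ in $L^2$ into control of the full $H^1$-norm. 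Adding the two inequalities cancels the $\W_{\rho,\nu}$ pieces common to $\Phi'_1$ and $\Phi'_2$, and the residual left-hand side $[\Phi'_1-\Phi'_2](v_2^*,z_2)-[\Phi'_1-\Phi'_2](v_1^*,z_1)$ expands into three pieces: the bilinear cross-term $\int_\Omega\C(\epsi(v_2^*-v_1^*)-(z_2-z_1)):\epsi(\uD_1-\uD_2)\,dx$, the load difference $\lan\ell_1-\ell_2,v_1^*-v_2^*\ran$, and the dissipation discrepancy $\D(z_2-\overline z_1)-\D(z_2-\overline z_2)+\D(z_1-\overline z_2)-\D(z_1-\overline z_1)$.

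To conclude, we would estimate each of these three pieces. Two applications of the triangle inequality \eqref{triangle} bound the dissipation discrepancy by $2\D(\overline z_1-\overline z_2)\leq 2C_D\|\overline z_1-\overline z_2\|_{L^1(\Omega;\Rzd)}$; Cauchy--Schwarz together with Young's inequality turn the bilinear and load terms into $\varepsilon X$ contributions (to be absorbed into the $2\alpha X$ lower bound) plus constant multiples of $\|\uD_1-\uD_2\|^2_{H^1(\Omega;\Rz^3)}$ and $\|\ell_1-\ell_2\|^2_{(H^1(\Omega;\Rz^3))'}$. Switching from $v_i^*$ back to $u_i$ via $u_1-u_2=(v_1^*-v_2^*)+(\uD_1-\uD_2)$ costs only an additive $\|\uD_1-\uD_2\|^2_{H^1(\Omega;\Rz^3)}$ term, and \eqref{cont_dep} follows.

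The delicate point in this scheme is the asymmetric scaling on the right-hand side of \eqref{cont_dep}: $\|\overline z_1-\overline z_2\|_{L^1(\Omega;\Rzd)}$ appears to the first power, whereas the other data differences appear squared in their natural Hilbert norms. This reflects the $1$-homogeneity of $D$, which supplies only a linear bound $\D(\overline z_1-\overline z_2)\leq C_D\|\overline z_1-\overline z_2\|_{L^1(\Omega;\Rzd)}$ and thus prevents the use of Young's inequality on this term; the dissipation discrepancy must be transported directly to the right-hand side.
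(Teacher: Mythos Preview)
Your proposal is correct and follows essentially the same route as the paper's proof. The only cosmetic difference is that you invoke Lemma~\ref{struc} to shift both problems to $\Y(0)$ before cross-testing, whereas the paper performs the equivalent shift implicitly by choosing competitors $(v_1,w_1)=(u_2-\uD_2+\uD_1,z_2)\in\Y(\uD_1)$ and $(v_2,w_2)=(u_1-\uD_1+\uD_2,z_1)\in\Y(\uD_2)$ directly; after summing, the residual terms and their estimation coincide with yours.
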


\begin{proof} We simply adapt the argument of Lemma \ref{cont_dep_const}
  Owing to the minimality of $\, (u_1,z_1)\,$ and the uniform convexity of $\, \W_{\rho,\nu}\,$ we readily deduce that, for any $\,  (v_1,w_1) \in \Y(\uD_1)$,
\begin{gather}
\alpha\|\epsi(u_1 - v_1) \|_{L^2(\Omega;\Rzn)}^2 + \alpha \| z_1 - w_1 \|^2_{L^2(\Omega;\Rzd)} + \alpha \nu  \| z_1 - w_1 \|^2_{H^1(\Omega;\Rzd)}\nonumber\\
\leq \W_{\rho,\nu}(v_1,w_1) - \lan \ell_1,v_1  \ran + \D(w_1 - \overline z_1)\nonumber\\
- \W_{\rho,\nu}(u_1,z_1) + \lan \ell_1,u_1 \ran- \D(z_1 - \overline z_1) .\nonumber
\end{gather}
On the other hand, the minimality of $\, (u_2,z_2)\,$ entails that, for all $\, (v_2,w_2) \in \Y(\uD_2)$,
\begin{gather}0\leq  \W_{\rho,\nu}(v_2,w_2)  - \lan \ell_2,v_2 \ran+ \D(w_2 - \overline z_2)- \W_{\rho,\nu}(u_2,z_2)+ \lan \ell_2,u_2 \ran - \D(z_2 - \overline z_2).\nonumber
\end{gather}
By choosing $\, (v_1,w_1)=(u_2 - \uD_2+ \uD_1,z_2) \,$ and $\, (v_2,w_2)=(u_1 - \uD_1 + \uD_2,z_1)\,$ and taking the sum of the corresponding inequalities one easily deduces that 
\begin{gather}
\alpha\|\epsi(u_1 - u_2) -\epsi(\uD_1 - \uD_2) \|_{L^2(\Omega;\Rzn)}^2 + \alpha\| z_1 - z_2 \|^2_{L^2(\Omega;\Rzd)}+ \alpha \nu  \| z_1 - z_2 \|^2_{H^1(\Omega;\Rzd)} \nonumber\\
\leq 2\CC(\epsi(\uD_1 - \uD_2)) - \int_\Omega \C\big(\epsi(u_1 - u_2) - (z_1 - z_2)\big):\epsi(\uD_1 - \uD_2) \nonumber\\
+  2 \D(\overline z_1 - \overline z_2) + \lan\ell_1 - \ell_2 ,u_1 - u_2 \ran - \lan \ell_1 - \ell_2, \uD_1 - \uD_2 \ran.\nonumber
\end{gather}
Hence, we readily find a positive constant $\, c \,$ depending on $\, \alpha$, $\, C_D $, and $\, \C \,$ in such a way that
\begin{gather}
\|\epsi(u_1 - u_2) \|_{L^2(\Omega;\Rzn)}^2 + \| z_1 - z_2 \|^2_{L^2(\Omega;\Rzd)}+  \nu  \| z_1 - z_2 \|^2_{H^1(\Omega;\Rzd)} \nonumber\\
\leq c\left( \|\uD_1 - \uD_2\|^2_{H^1(\Omega;\Rz^3)} + \|\overline z_1 - \overline z_2\|_{L^1(\Omega;\Rzd)} + \|\ell_1 - \ell_2 \|^2_{(H^1(\Omega;\Rz^3))'}\right).\nonumber
\end{gather}
Whence, the assertion follows from Korn's inequality \eqref{korn}.
\end{proof}

\paragraph{Convergence of space approximations.} Let us now turn our attention to some space approximation procedure and recall the material of Section \ref{math}. We denote by $\, \Y_{h,0}\,$ the set $\,\Y_{h,0}:= \Y_h \cap \Y(0)$. Given $\, (\tilde u, \tilde z)= p_h^\nu(u,z) \,$ we shall also denote by $\, p^\nu_{h,1}(u,z) := \tilde u \,$ and $\, p^\nu_{h,2}(u,z) := \tilde z$. For the sake of completeness, we shall consider also some approximate situation. Indeed, we ask that for each $\, (\uD,\ove z) \in  \Y \,$ and $\, \ell \in  (H^1(\Omega; \Rz^3))')$, there exist $\, ( \uD_h,\overline z_h) \in \Y_h \,$ {and} $\, \ell_h \in  (H^1(\Omega; \Rz^3))' \,$ such that 
\begin{gather}
(\uD_h,\overline z_h) \rightarrow (\uD,\overline z) \quad \text{strongly in} \ \ H^1(\Omega;\Rz^3)\times L^1(\Omega;\Rzd),\nonumber\\
 \text{and} \ \ \ell_h \rightarrow \ell \quad \text{strongly in} \ \ (H^1(\Omega;\Rz^3))'. \label{err}
\end{gather}
 We shall be concerned with the approximating minimum problem
\begin{gather}
  (u_h,z_h) \in \argmin_{(u-  \uD_h,z) \in  \Y_{h,0}}\big(\W_{\rho,\nu}(u,z) -\lan \ell_h, u\ran  + \D(z- \overline z_h)\big).\label{min3}
\end{gather}
The latter problem is of course uniquely solvable since $\, (u,z) \mapsto \W_{\rho,\nu}(u,z) -\lan \ell_h, u\ran + \D(z -\overline z) \,$ is again uniformly convex, coercive, and lower semicontinuous in $\, \Y_h \,$ and $\, \Y_{h,0}\,$ is convex and closed.

Assuming \eqref{err} and letting $\, (u,z) \,$ and $\, (u_h,z_h) \,$ solve the minimum problem \eqref{min} and \eqref{min3}, respectively, the main issue of this section is that of proving that $\, (u_h,z_h) \,$ converges to $\, (u,z) \,$ strongly in $\, \Y$. More precisely, in the case $\, \rho > 0$, some quantitative error estimates can be obtained.

\begin{lemma}[Error for $\, \boldsymbol \rho\boldsymbol >\boldsymbol 0$]\label{error}
 Let $\, \rho>0, \,\nu\geq 0 \,$ be given and $\, (u,z) \,$ and $\, (u_h,z_h) \,$ solve \eqref{min} and \eqref{min3}, respectively. Moreover, let 
\begin{equation}\label{espla} 
\lan \ell_h, v - p^\nu_{h,1}(v,w)\ran =0\quad \text{for all}\quad (v,w) \in \Y \ \ \text{and} \ \  h>0.
\end{equation}
Then, there exists a positive constant $\, c \,$ depending on $\,\rho,\, c_0, \, \alpha,\, C_D$, and $\, \C\,$  such that
  \begin{gather}
 \| u - u_h\|^2_{H^1(\Omega;\Rz^3)} + \| z - z_h \|^2_{L^2(\Omega;\Rzd)} + \nu\| z - z_h \|^2_{H^1(\Omega;\Rzd)} \nonumber\\
\leq   c\left(\| \uD - \uD_h\|_{H^1(\Omega;\Rz^3)}^2 + \| \overline z - \overline z_h\|_{L^1(\Omega;\Rzd)} \right)\nonumber\\
+ c \left( \|\ell - \ell_h \|^2_{(H^1(\Omega; \Rz^3))'} +\|  z -  p^\nu_{h,2}(v,z)\|_{L^1(\Omega;\Rzd)}\right).\label{err2}
  \end{gather}
\end{lemma}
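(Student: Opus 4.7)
My strategy is to mimic the continuous-dependence proof of Lemma~\ref{continuous_dependence} but to cross-test the two minimizers, which now live in different admissible sets $\Y(\uD)$ and $\uD_h + \Y_{h,0}$. The mismatch will be absorbed via Galerkin projection errors: the load-pairing error is annihilated by the orthogonality hypothesis~\eqref{espla}, while the energy-pairing error is controlled by a Taylor expansion that requires the smoothness of $W_\rho$ for $\rho>0$.

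First I would test~\eqref{min} with $(u_h - \uD_h + \uD,\, z_h) \in \Y(\uD)$, which is admissible because $u_h - \uD_h \in \Y_{h,0}$ vanishes on $\GammaD$. Uniform $\alpha$-convexity of $\W_{\rho,\nu}$ (valid since $\nabla^2 W_\rho \ge c_2\, 1_4$) yields
\[
\alpha\bigl\|\epsi(u - u_h) - \epsi(\uD - \uD_h)\bigr\|_{L^2}^2 + \alpha\|z - z_h\|_{L^2}^2 + \alpha\nu\|\nabla(z - z_h)\|_{L^2}^2 \leq \Delta_1,
\]
with $\Delta_1$ grouping the $\W_{\rho,\nu}$, $\ell$, and $\D$ contributions. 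Second, I would test~\eqref{min3} with $(\tilde u + \uD_h,\, \tilde z)$ where $(\tilde u, \tilde z) := p^\nu_h(u - \uD, z)$, which, after accounting for the boundary trace, lies in $\uD_h + \Y_{h,0}$; minimality of $(u_h, z_h)$ then produces a second inequality $0 \leq \Delta_2$.

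Summing $\Delta_1 + \Delta_2$ and rearranging as in Lemma~\ref{continuous_dependence} produces four error sources on the right-hand side: (i) cross bilinear $\CC$-terms involving $\epsi(\uD - \uD_h)$, absorbed via Cauchy-Schwarz and Young into $\|\uD - \uD_h\|_{H^1}^2$; (ii) dissipation terms collapsing, by~\eqref{triangle} and $D(a) \leq C_D |a|$, to $\|\overline z - \overline z_h\|_{L^1} + \|z - p^\nu_{h,2}(u, z)\|_{L^1}$; (iii) load terms recombining to $\|\ell - \ell_h\|^2_{(H^1)'}$, with~\eqref{espla} cancelling the Galerkin residual $\langle \ell_h, (u - \uD) - p^\nu_{h,1}(u - \uD, z)\rangle$ exactly; and (iv) the Taylor remainder $\W_{\rho,\nu}(\tilde u + \uD_h, \tilde z) - \W_{\rho,\nu}(u, z)$, which for $\rho > 0$ is dominated by the Galerkin contractivity bound~\eqref{gal2}. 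A final application of Korn's inequality~\eqref{korn} promotes the $L^2$-estimate on $\epsi(u - u_h)$ to the full $H^1$-norm.

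The main obstacle is simultaneously ensuring admissibility of the discrete test function $(\tilde u + \uD_h, \tilde z)$ in $\uD_h + \Y_{h,0}$ and absorbing the Galerkin projection errors: the load-side absorption is precisely the purpose of~\eqref{espla}, while the energy-side absorption relies on the boundedness and Lipschitz continuity of $\nabla^2 W_\rho$ for $\rho>0$, which explains the dependence of $c$ on $\rho$. In the singular regime $\rho = 0$ the hard constraint $|z| \leq c_3$ would not be preserved by $p^\nu_h$, obstructing the Taylor step and motivating the restriction to $\rho > 0$.
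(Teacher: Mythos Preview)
Your overall strategy is the paper's: cross-test the two minimum problems with $(u_h-\uD_h+\uD,z_h)$ and $(\tilde v+\uD_h,\tilde z)$ where $(\tilde v,\tilde z)=p_h^\nu(u-\uD,z)$, sum, and sort the right-hand side into the four sources you list. Points (i)--(iii) go through as you say, and Korn finishes the job.

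Point (iv), however, is not quite right as stated. The energy defect is \emph{not} handled by a Taylor expansion of $\W_{\rho,\nu}$ using boundedness of $\nabla^2 W_\rho$, nor is it ``dominated by~\eqref{gal2}'' alone. The paper's device is the explicit splitting
\[
\W_{\rho,\nu}(v,z)=\A(v,z)+\G_\rho(z)+\text{(linear cross term in }\epsi(\uD)\text{)},\qquad \G_\rho(z):=\F_\rho(z)-c_2\|z\|_{L^2}^2,
\]
so that after summing one is left with $\A(\tilde v,\tilde z)-\A(v,z)+\G_\rho(\tilde z)-\G_\rho(z)$. The quadratic piece is $\le 0$ by the contractivity~\eqref{gal2}; the cross term with $\epsi(\uD_h)$ disappears via the full Galerkin orthogonality~\eqref{gal} tested against $(\uD_h,0)\in\Y_h$. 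What remains is $\G_\rho(\tilde z)-\G_\rho(z)+\D(z-\tilde z)$, and for $\rho>0$ the gradient $\nabla G_\rho$ is globally bounded (this is where the constant picks up its $\rho$-dependence), giving the $L^1$ control $\|z-p^\nu_{h,2}(v,z)\|_{L^1}$ that appears in~\eqref{err2}. A second-order Taylor argument based on $\nabla^2 W_\rho$ bounded would instead produce a quadratic $L^2$ bound on the projection error, which is not the stated estimate.
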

Let us comment that \eqref{espla} turns out to be fulfilled in the frame of conforming finite elements. Considering for simplicity the case where $\, p_{h,1}^\nu\,$ does not depend on $\, w$, a fairly usual choice for $\, \ell_h \,$ is 
$$\lan \ell_h, v \ran := \lan \ell, p_{h,1}^\nu(v)\ran\quad \forall v \in \U,$$
whence \eqref{espla} follows.
\begin{proof}
  The estimate follows by carefully reconsidering the continuous dependence proof of Lemma \ref{continuous_dependence} and exploiting Galerkin's orthogonality \eqref{gal}. Indeed, making use of Lemma \ref{struc}, one obtains for $\, v= u -\uD \,$ and $\, v_h = u_h - \uD_h$,
  \begin{gather}
    \alpha\|\epsi(v - v_h) \|_{L^2(\Omega;\Rzn)}^2 + \alpha \| z - z_h \|^2_{L^2(\Omega;\Rzd)} + \alpha \nu  \| z - z_h \|^2_{H^1(\Omega;\Rzd)}\nonumber\\
\leq \A(v_h,z_h) + \G_{\rho}(z_h) + \int_\Omega \C(\epsi(v_h) - z_h): \epsi(\uD) + \D(z_h - \overline z) - \lan \ell, v_h - v\ran \nonumber\\
 -\A(v,z) - \G_{\rho}(z) - \int_\Omega \C(\epsi(v) - z): \epsi(\uD) - \D(z - \overline z)\label{bla0}
  \end{gather}
where we have denoted by $\, \G_{\rho}: L^2(\Omega, \Rzd)\rightarrow [0,+\infty]\,$ the convex functional
$$\G_\rho(z) := \F_\rho(z) - c_2 \|z \|^2_{L^2(\Omega, \Rzd)}.$$
Moreover, arguing exactly as in Lemma \ref{continuous_dependence} and defining $\,(\tilde v,\tilde z):= p^\nu_{h}(v,z)$, we readily check that
\begin{gather}
  0\leq  \A(\tilde v,\tilde z) + \G_{\rho}(\tilde z) + \int_\Omega \C(\epsi(\tilde v) - \tilde z): \epsi(\uD_h) + \D(\tilde z - \overline z_h) - \lan \ell_h, \tilde v - v_h\ran \nonumber\\
 -\A(v_h,z_h) - \G_{\rho}(z_h) - \int_\Omega \C(\epsi(v_h) - z_h): \epsi(\uD_h) - \D(z_h - \overline z_h). \label{bla}
\end{gather}
Taking the sum of the latter inequalities and exploiting \eqref{gal2}, \eqref{espla}, and $\, (\uD_h,0) \in \Y_h$, we easily check that 
  \begin{gather}
    \alpha\|\epsi(v - v_h) \|_{L^2(\Omega;\Rzn)}^2 + \alpha \| z - z_h \|^2_{L^2(\Omega;\Rzd)} + \alpha \nu  \| z - z_h \|^2_{H^1(\Omega;\Rzd)}\nonumber\\
\leq \int_\Omega \C(\epsi(v_h -v) - (z_h-z)): \epsi(\uD- \uD_h) + 2 \D(\overline z - \overline z_h) \nonumber\\
+ \lan \ell - \ell_h , v - v_h \ran + \G_\rho(\tilde z) - \G_\rho(z) + \D(z - \tilde z),\nonumber
  \end{gather}
and the assertion follows.
\end{proof}

We shall now turn to some (necessarily weaker) quantitative convergence estimate for the specific case $\, \rho =0$.

\begin{lemma}[Convergence for $\, \boldsymbol \rho\boldsymbol =\boldsymbol 0$]\label{error2}
 Under the assumptions of Lemma \emph{\ref{error}}, let $\, \rho = 0$.
%%   Moreover, we shall denote by $\, q_h : \Y \rightarrow \Y_h \,$ some projection operator such that pointwise convergence is preserved, $\, |q_h^2(v,w)|\leq c_3  \,$ almost everywhere in $\, \Omega\,$ for all $\, (v,w)\in \Y\,$ with $\, |w|\leq c_3  \,$ almost everywhere in $\, \Omega$ and $\, \|q_h^2(v,w)\|_{L^1(\Omega;\Rzd)}\leq \|w\|_{L^1(\Omega;\Rzd)}$. 
Moreover, let $\, (\tilde v, \tilde z):= p_h^\nu(u-\uD,z)\,$ and $\, (\hat v, \hat z):= (q_h(u-\uD),r^\nu_h(w))$. Then, there exists a positive constant $\, c \,$ depending on $\,c_1 \,$ and the same constant of \eqref{err2} such that
  \begin{gather}
 \| u - u_h\|^2_{H^1(\Omega;\Rz^3)} + \| z - z_h \|^2_{L^2(\Omega;\Rzd)} + \nu\| z - z_h \|^2_{H^1(\Omega;\Rzd)} \nonumber\\
\leq   c\left(\| \uD - \uD_h\|_{H^1(\Omega;\Rzn)}^2 + \| \overline z - \overline z_h\|_{L^1(\Omega;\Rzd)} + \|\ell - \ell_h \|^2_{(H^1(\Omega; \Rz^3))'} +\|  z -  \tilde z\|_{L^1(\Omega;\Rzd)}\right)\nonumber\\
+c\left( \A(\hat v, \hat z) - \A(\tilde v, \tilde z) + \int_\Omega \C(\epsi(\hat v - \tilde v) - (\hat z - \tilde z)):\epsi (\uD_h)\right)\nonumber\\
 + c\left(\lan \ell_h, \tilde v - \hat v\ran + \|\hat z - \tilde z \|_{L^1(\Omega;\Rzd)}\right) .\label{err3}
  \end{gather}
\end{lemma}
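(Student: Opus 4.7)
The strategy is to adapt the argument of Lemma \ref{error} to the non-smooth case $\rho=0$. The essential obstruction is that the Galerkin projection $(\tilde v, \tilde z) = p_h^\nu(u - \uD, z)$ need not satisfy the pointwise constraint $|\tilde z|\leq c_3$ embedded in $F_0$ through the indicator $I$, so $(\tilde v, \tilde z)$ is not an admissible competitor for the discrete minimality. The remedy, dictated by the statement, is to replace $\tilde z$ by $\hat z = r_h^\nu(z)$, which by \eqref{clem} preserves the constraint, and then to carry both $(\tilde v, \tilde z)$ and $(\hat v, \hat z)$ through the estimates so that Galerkin orthogonality can still be used for the quadratic energy $\A$ while admissibility in $\F_0$ is guaranteed by $(\hat v, \hat z)$.

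First I would invoke Lemma \ref{struc} to recast both \eqref{min} and \eqref{min3} over $\Y(0)$ in terms of $v := u - \uD$ and $v_h := u_h - \uD_h$, paying the price of a cross term $\int \C(\epsi(\cdot) - \cdot):\epsi(\uD)$ (respectively $\epsi(\uD_h)$). Testing the continuous minimum against $(v_h, z_h)$ (admissible because $\F_0(z_h) < +\infty$ forces $|z_h|\leq c_3$) and exploiting the $\alpha$-convexity of $\W_{0,\nu}$ gives a first inequality in which $\alpha(\|\epsi(v - v_h)\|_{L^2}^2 + \|z - z_h\|_{L^2}^2 + \nu\|z - z_h\|_{H^1}^2)$ appears on the left-hand side. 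Testing the discrete minimum against $(\hat v, \hat z)\in \Y_h\cap \Y(0)$ (admissible by \eqref{clem}) yields a second inequality. Writing $\W_{0,\nu}(u,z) = \A(u,z) + \G_0(z)$ with $\G_0(z) = c_1\|z\|_{L^1(\Omega;\Rzd)} + I(z)$, the sum of the two inequalities reduces the proof to estimating $\A(\hat v, \hat z) - \A(v,z)$, $\G_0(\hat z) - \G_0(z)$, the two cross terms, the dissipation mismatch, and the load pairings.

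For the energy, I would insert $(\tilde v, \tilde z)$ and use \eqref{gal2} to obtain $\A(\hat v, \hat z) - \A(v,z)\leq \A(\hat v, \hat z) - \A(\tilde v, \tilde z)$, which produces the first boxed term in \eqref{err3}. Since $I(\hat z) = I(z) = 0$, the $\G_0$ difference collapses to $c_1(\|\hat z\|_{L^1} - \|z\|_{L^1})\leq c_1(\|\hat z - \tilde z\|_{L^1} + \|\tilde z - z\|_{L^1})$. For the combined cross term I would add and subtract $\epsi(\uD_h)$ and $(\tilde v, \tilde z)$ to split it into three pieces: a contribution tested against $\epsi(\uD - \uD_h)$, handled by Young's inequality and absorbed into the LHS to give $\|\uD - \uD_h\|_{H^1}^2$; a contribution equal to $2\mathcal{B}_\nu\big((\tilde v, \tilde z) - (v, z), (\uD_h, 0)\big)$ that vanishes by the Galerkin orthogonality \eqref{gal} since $(\uD_h, 0)\in \Y_h$; and the residual $\int \C(\epsi(\hat v - \tilde v) - (\hat z - \tilde z)):\epsi(\uD_h)$ already present in \eqref{err3}.

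The dissipation differences, via the triangle inequality for $D$ and the bound $D\leq C_D|\cdot|$, collapse to a multiple of $\|\hat z - z\|_{L^1} + \|\overline z - \overline z_h\|_{L^1}$, and the first summand splits through $\tilde z$ as above. The load pairing $\langle \ell, v - v_h\rangle + \langle \ell_h, v_h - \hat v\rangle$ is rearranged as $\langle \ell - \ell_h, v - v_h\rangle + \langle \ell_h, v - \hat v\rangle$; inserting $\tilde v$ in the second term and using the assumption \eqref{espla} to kill $\langle \ell_h, v - \tilde v\rangle$ leaves exactly $\langle \ell_h, \tilde v - \hat v\rangle$, while Young's inequality on the first term yields $\|\ell - \ell_h\|^2_{(H^1(\Omega;\Rz^3))'}$ after absorbing $\|v - v_h\|^2_{H^1}$ (reconstructed from $\|\epsi(v - v_h)\|^2_{L^2}$ via Korn's inequality \eqref{korn}) into the LHS. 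The conceptual obstacle is exclusively the very first one — loss of admissibility of the Galerkin projection — and its resolution by the twin use of $(\tilde v, \tilde z)$ (for orthogonality) and $(\hat v, \hat z)$ (for admissibility); what remains is a careful but purely mechanical bookkeeping that matches the seven summands of \eqref{err3} with $c$ depending only on $\alpha$, $c_0$, $c_1$, $C_D$ and $\C$.
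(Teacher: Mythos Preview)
Your proposal is correct and follows essentially the same route as the paper: keep the continuous-minimality inequality \eqref{bla0} tested against $(v_h,z_h)$, replace \eqref{bla} by the discrete-minimality inequality tested against the admissible pair $(\hat v,\hat z)$, sum, and then add and subtract $(\tilde v,\tilde z)$ so that Galerkin orthogonality \eqref{gal}--\eqref{gal2} and \eqref{espla} can still be exploited while the residual terms produce the last two lines of \eqref{err3}. Your identification of the obstruction (loss of the constraint $|\tilde z|\le c_3$ under $p_h^\nu$) and of the remedy (the dual use of $(\tilde v,\tilde z)$ for orthogonality and $(\hat v,\hat z)$ for admissibility via \eqref{clem}) matches the paper exactly, and your bookkeeping is in fact more explicit than the paper's short sketch.
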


Since of course $\, p_h(v,w)- (q_h(v),r_h^\nu(w))\,$ strongly converges to zero in $\, \Y$, estimate \eqref{err3} proves in particular that, assuming \eqref{err}, the strong convergence of the approximations holds.

\begin{proof} This proof follows the same lines of Lemma \ref{error}.
We shall however replace \eqref{bla} as follows.
 \begin{gather}
  0\leq  \A(\hat v,\hat z) + c_1\|\hat z\|_{L^1(\Omega;\Rzd)} + \int_\Omega \C(\epsi(\hat v) - \hat z): \epsi(\uD_h) + \D(\hat z - \overline z_h) - \lan \ell_h, \hat v - v_h\ran \nonumber\\
 -\A(v_h,z_h) - c_1\|z_h\|_{L^1(\Omega;\Rzd)} - \int_\Omega \C(\epsi(v_h) - z_h): \epsi(\uD_h) - \D(z_h - \overline z_h), \nonumber
\end{gather} 
and again take its sum with \eqref{bla0}. In order to reduce to the situation of Lemma \ref{error} one needs to simply add and subtract the term $\,\tilde z\,$ in most of the occurrences of $\, \hat z$. This procedure of course produces the extra residual terms that appear in the last two lines of \eqref{err3}.
\end{proof}

%%%%%%%%%%%%%%%%%%%%%%%%%%%%%%%%%%%%%%%%%%%%%%%%%%%%%%%%%%%%%%%

\section{The incremental problem.}\label{INCRE} 

We shall prepare here some material in the direction of the full time-stepping procedure. To this aim, we assume to be given a partition $\,  P:=\{0=t_0<t_1<\dots<t_{N-1}<t_{N}=T\} \,$ with diameter $\, \tau=\max_{i=1,\dots,N}(t_i - t_{i-1}) \,$ and data $\, \{\uD_i \}_{i=0}^N \in (H^1(\Omega;\Rz^3))^{N+1}$, $\,  \{ \ell_i \}_{i=0}^N \in ((H^1(\Omega;\Rz^3))')^{N+1}$, and $\, (u_0,z_0) \in \Y(\uD_0)$. Hence, we find iteratively the unique solutions $\, \{(u_i,z_i)\}_{i=1}^N \,$ to the problem
\begin{gather}
  (u_i,z_i) \in \argmin_{(u,z)\in \Y(\uD_i)}\big(\W_{\rho,\nu} (u,z) - \lan \ell_i, u\ran + \D(z- z_{i-1})\big)\quad \text{for} \ \ i=1, \dots,N.\label{pass}
\end{gather}
We shall denote by  $\, (u , z)\,$ the incremental solution which interpolates right-continuously the values $\, (u_i,z_i)\,$ on the partition $\, P$. Hence, the following a priori estimate holds true.

\begin{lemma}[A priori bounds]\label{apri}
 Let $\, \rho, \, \nu \geq 0$. Then there exists a positive constant $\, c \,$ depending on $\, \alpha, \, \W_{\rho,\nu}(u_0,z_0),\, \lan \ell_0,u_0\ran$, and $\, \sum_{i=1}^{N}\| \ell_i - \ell_{i-1}\|_{(H^1(\Omega;\Rz^3))'}\,$ such that 
  \begin{equation}
    \label{a_priori}
    \W_{\rho,\nu}(u,z) + \Diss_\D(z,[0,T])\leq c.
  \end{equation}
\end{lemma}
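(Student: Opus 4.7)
The plan is to derive a discrete version of the energy inequality already used in the proof of Theorem~\ref{const}, adapted to the presence of the Dirichlet constraint, and then close it with a discrete Gronwall argument. The three ingredients I rely on are: the minimality in \eqref{pass} tested against a natural competitor; an Abel-type summation on the load terms; and the coercivity of $\A$ combined with Korn's inequality \eqref{korn}.

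For every $i\in\{1,\ldots,N\}$ the pair $(u_{i-1}+\uD_i-\uD_{i-1},\,z_{i-1})$ lies in $\Y(\uD_i)$, so that testing it in the minimality defining $(u_i,z_i)$ yields
\[
\W_{\rho,\nu}(u_i,z_i) - \lan \ell_i, u_i\ran + \D(z_i - z_{i-1}) \leq \W_{\rho,\nu}(u_{i-1}+\uD_i-\uD_{i-1}, z_{i-1}) - \lan \ell_i, u_{i-1}+\uD_i-\uD_{i-1}\ran.
\]
A first-order Taylor expansion of the quadratic part of $\W_{\rho,\nu}$ around $(u_{i-1},z_{i-1})$ shows that the right-hand side is bounded by $\W_{\rho,\nu}(u_{i-1},z_{i-1}) - \lan \ell_i,u_{i-1}\ran$ plus a remainder which, via Young's inequality, can be absorbed into a small multiple of the energies at steps $i$ and $i-1$ together with a term of order $\|\uD_i-\uD_{i-1}\|_{H^1}^2$.

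Next, I would use the discrete identity $\lan \ell_i,u_{i-1}\ran = \lan \ell_{i-1},u_{i-1}\ran + \lan \ell_i-\ell_{i-1},u_{i-1}\ran$ to recast the inequality in telescoping form for the reduced energy $E_i := \W_{\rho,\nu}(u_i,z_i) - \lan \ell_i,u_i\ran$. Summing over $i=1,\ldots,m$ for arbitrary $m\leq N$ and noting that $\sum_{i=1}^m \D(z_i-z_{i-1})$ equals $\Diss_\D(z,[0,t_m])$ by the piecewise constancy of $z$, I arrive at
\[
E_m + \Diss_\D(z,[0,t_m]) \leq E_0 + \sum_{i=1}^m \|\ell_i-\ell_{i-1}\|_{(H^1(\Omega;\Rz^3))'}\,\|u_{i-1}\|_{H^1(\Omega;\Rz^3)} + R,
\]
where $R$ collects the (manageable) Dirichlet remainders. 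To convert $E_m$ into a bound on $\W_{\rho,\nu}(u_m,z_m)$ and to estimate $\|u_i\|_{H^1}$ in terms of the energy, I apply Korn's inequality \eqref{korn} to $u_i-\uD_i$ together with the coercivity of $\A$, obtaining $\|u_i\|_{H^1}^2 \leq c\bigl(\W_{\rho,\nu}(u_i,z_i)+\|\uD_i\|_{H^1}^2\bigr)$; combined with $|\lan \ell_i,u_i\ran|\leq \|\ell_i\|_{(H^1)'}\|u_i\|_{H^1}$ and Young's inequality, a fraction of $\W_{\rho,\nu}(u_i,z_i)$ is absorbed into the left-hand side.

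The main obstacle is this last absorption coupled with the non-uniform-in-$i$ character of the load increments: the inequality satisfied by $\W_{\rho,\nu}(u_m,z_m)+\Diss_\D(z,[0,t_m])$ has a right-hand side of the form constant plus $\sum_{i\leq m}\|\ell_i-\ell_{i-1}\|_{(H^1)'}\,\sqrt{\W_{\rho,\nu}(u_{i-1},z_{i-1})}$, so one cannot simply invoke the classical discrete Gronwall lemma with weights $t_i-t_{i-1}$. Instead, one has to use its BV version in which the effective ``step'' at index $i$ is $\|\ell_i-\ell_{i-1}\|_{(H^1)'}$; its application then produces exactly \eqref{a_priori} with a constant of the claimed dependence.
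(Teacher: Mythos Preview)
Your approach is essentially the paper's: test the minimality in \eqref{pass} against the previous step, telescope the reduced energies $E_i=\W_{\rho,\nu}(u_i,z_i)-\lan\ell_i,u_i\ran$, and close with a discrete Gronwall argument using the increments $\|\ell_i-\ell_{i-1}\|_{(H^1)'}$ as weights. The only difference is that the paper tests directly with $(u_{i-1},z_{i-1})$ rather than your shifted competitor $(u_{i-1}+\uD_i-\uD_{i-1},z_{i-1})$ --- it tacitly treats the Dirichlet constraint as fixed (equivalently, works in the $v$-formulation of Section~\ref{evoevo}) --- so no Dirichlet remainder $R$ appears, which is why the stated constant in \eqref{a_priori} does not depend on $\|\uD_i-\uD_{i-1}\|_{H^1}$.
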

\begin{proof}
  From the minimality of $\, (u_i,z_i)\,$ in \eqref{pass} one has that
  \begin{gather}
    \W_{\rho,\nu}(u_i,z_i) - \lan \ell_i , u_i \ran + \D(z_i - z_{i-1})\nonumber\\
 \leq  \W_{\rho,\nu}(u_{i-1}, z_{i-1}) - \lan \ell_{i-1}, u_{i-1}\ran - \lan \ell_i - \ell_{i-1},u_{i-1}\ran.\nonumber
  \end{gather}
Taking the sum in the latter relation for $\, i=1, \dots,m, \ m \leq N$, one has that
  \begin{gather}
    \W_{\rho,\nu}(u_m,z_m) - \lan \ell_m , u_m \ran + \sum_{i=1}^m\D(z_i - z_{i-1})\nonumber\\
 \leq  \W_{\rho,\nu}(u_{0}, z_{0}) - \lan \ell_{0}, u_{0}\ran - \sum_{i=1}^m\lan \ell_i - \ell_{i-1},u_{i-1}\ran.\nonumber
  \end{gather}
and the assertion follows from the uniform convexity of $\, \W_{\rho,\nu}\,$ and the Gronwall lemma.
\end{proof}

Let us collect here some remark on the incremental problem \eqref{pass} in the space discretized situation. To this aim we shall refer to the notation introduced in Section \ref{math} and assume to be given, for all $\, h >0$, suitable data $\, \{\uD_{i,h} \}_{i=0}^N \in ({\cal U}_h)^{N+1}$, $\,  \{ \ell_{i,h} \}_{i=0}^N \in ((H^1(\Omega;\Rz^3))')^{N+1}$, and $\, (u_{0,h},z_{0,h})\,$ such that $\,  (u_{0,h}- \uD_{0,h},z_{0,h}) \in \Y_{0,h}$. Hence, by solving iteratively the minimum problem, we define the right-continuous piecewise constant incremental solutions $\, (u_{h},z_{h})$.

First of all, one should notice that the a priori bound of Lemma \ref{apri} holds for  $\, (u_{h},z_{h})\,$ as well (of course the dependences of the constant are referred to the approximating data). Secondly, we are in the position of obtaining for  $\, (u_{h},z_{h})\,$ the same continuous dependence as in Lemma \ref{continuous_dependence}. This fact entails the convergence of the space approximated incremental problem in $\, N \,$ steps to the corresponding limit. In particular, employing Lemma \ref{error} or \ref{error2}, respectively, and performing an induction over $\, i=1, \dots, N$, we have the following result.

\begin{lemma}[Convergence for $\,\boldsymbol N\,$ steps as $\, \boldsymbol h \boldsymbol \rightarrow \boldsymbol 0$]\label{conve}
 Under the above assumptions, let the parameters $\, \rho,\nu \geq 0$, $\, N \in \Nz \,$ be fixed and assume that $\, \uD_{i,h}\rightarrow \uD_i\,$ in $\, H^1(\Omega;\Rz^3)$, $\,   \ell_{i,h}\rightarrow \ell_i \,$ in $\,(H^1(\Omega;\Rz^3))'$, and $\, (u_{0,h},z_{0,h}) \rightarrow (u_0,z_0)\,$ in $\, H^1(\Omega;\Rz^3) \times L^1(\Omega;\Rzd)\,$ as $\, h \rightarrow 0$. Then, we have that $\,  u_{i,h}\rightarrow  u_i\,$ in $\, H^1(\Omega;\Rz^3)\,$ as well, for all $\, i =1,\dots,N$.
\end{lemma}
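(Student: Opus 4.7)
The natural approach is a finite induction on $i\in\{0,1,\dots,N\}$, since the $i$-th step of \eqref{pass} depends on the previous step only through the datum $z_{i-1}$ (and the same structural feature is inherited by the discrete problem). The base case $i=0$ is furnished directly by the hypothesis $(u_{0,h},z_{0,h})\to(u_0,z_0)$ in $H^1(\Omega;\Rz^3)\times L^1(\Omega;\Rzd)$.

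For the inductive step, assuming $(u_{i-1,h},z_{i-1,h})\to(u_{i-1},z_{i-1})$ in $H^1(\Omega;\Rz^3)\times L^1(\Omega;\Rzd)$, I would apply Lemma \ref{error} (when $\rho>0$) or Lemma \ref{error2} (when $\rho=0$) to the $i$-th pair of single-step minimum problems \eqref{min} and \eqref{min3}, with the choices $\overline z = z_{i-1}$, $\overline z_h = z_{i-1,h}$, $\uD = \uD_i$, $\uD_h = \uD_{i,h}$, $\ell = \ell_i$, $\ell_h = \ell_{i,h}$. The point is to argue that every contribution on the right-hand side of \eqref{err2}, respectively \eqref{err3}, vanishes as $h\to 0$: the data-discrepancy terms $\|\uD_i - \uD_{i,h}\|_{H^1}^2$ and $\|\ell_i - \ell_{i,h}\|_{(H^1)'}^2$ do so by the assumed convergences; the dissipative term $\|z_{i-1}-z_{i-1,h}\|_{L^1}$ does so by the inductive hypothesis; and the Galerkin residual $\|z_i - p^\nu_{h,2}(u_i-\uD_i,z_i)\|_{L^1}$ does so because the projector $p^\nu_h$ tends pointwise in $\Y$ to the identity, with $L^2(\Omega)\hookrightarrow L^1(\Omega)$ providing the needed mode of convergence. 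The resulting bound then yields $u_{i,h}\to u_i$ in $H^1(\Omega;\Rz^3)$ and $z_{i,h}\to z_i$ in $L^2(\Omega;\Rzd)$, hence in $L^1(\Omega;\Rzd)$, closing the induction.

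For the case $\rho=0$, the additional residual quantities entering \eqref{err3}, i.e.\ the $\A$-difference $\A(\hat v,\hat z)-\A(\tilde v,\tilde z)$, the $\C$-bilinear term involving $\epsi(\uD_h)$, the duality pairing $\langle \ell_h,\tilde v - \hat v\rangle$, and $\|\hat z - \tilde z\|_{L^1}$, all vanish as $h\to 0$: indeed both $p^\nu_h(u_i-\uD_i,z_i)$ and $(q_h(u_i-\uD_i),r^\nu_h(z_i))$ converge to $(u_i-\uD_i,z_i)$ in $\Y$ by the hypotheses collected in Section~\ref{math}, and $\A$ together with the linear and bilinear contributions are continuous on $\Y$.

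The main (mild) obstacle I anticipate is the bookkeeping of the constants along the chain of $N$ steps: the $c$ in \eqref{err2}/\eqref{err3} depends on the data, and one must check that, for the fixed finite $i$, these remain uniformly controlled in $h$. This is however automatic, since the approximating data $\uD_{i,h},\ell_{i,h},z_{i-1,h}$ are bounded uniformly in $h$ thanks to the assumed convergences (and, at intermediate steps, to the a priori estimate of Lemma \ref{apri} transported to the discrete setting). Iterating the single-step argument $N$ times then yields the claim for every $i=1,\dots,N$.
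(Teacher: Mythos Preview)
Your proposal is correct and follows essentially the same approach as the paper, which simply states that the result follows by employing Lemma~\ref{error} (for $\rho>0$) or Lemma~\ref{error2} (for $\rho=0$) and performing an induction over $i=1,\dots,N$. Your write-up supplies the details the paper omits, in particular the verification that each residual term in \eqref{err2}/\eqref{err3} vanishes and that the induction carries the $L^1$-convergence of $z_{i,h}$ needed at the next step.
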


Indeed, we would be in the position of stating a more precise quantitative bound for the error $\, \max_{1\leq i \leq N}\|u_{i} - u_{i,h}\|_{H^1(\Omega;\Rz^3)}\,$ in terms of data. This bound will however deteriorate and eventually explode as $\, N \rightarrow + \infty$.

%%%%%%%%%%%%%%%%%%%%%%%%%%%section
\section{The evolution problem}\label{evoevo}
\setcounter{equation}{0}

We shall finally turn to the study of the time-continuous problem. In particular, we are interested in {\it energetic solutions} to \eqref{evol}-\eqref{equi} along with the above prescribed boundary displacement and boundary traction conditions. Namely, our solutions will be functions $\,  t\mapsto (u(t),z(t)) \in \Y(\uD(t))\,$ such that $\, t \mapsto \lan \dot \ell(t),u(t) \ran \,$ is integrable and, for all $\, t \in [0,T]$,
 \begin{eqnarray}
&&  (u(t),z(t))\in \Big\{(u,z) \in \Y(\uD(t)) \ \ \text{such that},\ \   \forall (\overline u, \overline z) \in  \Y(\uD(t)),\nonumber\\
&&\qquad\qquad\qquad\qquad\W_{\rho,\nu}(u,z) - \lan \ell(t),u \ran \leq \W_{\rho,\nu}(\overline u, \overline z)  - \lan \ell(t),\overline u \ran + \D(z - \overline z)\Big\},  \label{S}\\
&&\W_{\rho,\nu}(u(t),z(t)) -   \lan \ell(t),u(t) \ran + \Diss_\D(z,[0,t]) \nonumber\\
&&\qquad\qquad = \W_{\rho,\nu}(u(0),z(0)) - \lan \ell(0),u(0) \ran - \int_0^t  \lan \dot\ell(s),u(s)\ran\, ds.\label{E}
\end{eqnarray}

Following the argument of Section \ref{constitutive}, we are in the position of proving the equivalence of the two formulations \eqref{evol}-\eqref{equi} and \eqref{S}-\eqref{E} as soon as the above mentioned boundary condition (plus an extra homogeneous Neumann type condition for $\, z \,$ when $\, \nu >0$) are considered and the solutions are assumed to be at least absolutely continuous. The latter is of course a quite natural regularity requirement and we will readily recover it in our framework. 

The main issue of this section is to fix $\, \nu> 0\,$ and exploit the
analysis of \AAAS\cite{Mielke-et-al02,Mielke-Theil04} \AAAE in order to obtain some existence, uniqueness, and convergence of approximations result. Apart from infinite dimensions, the arguments involved here are quite close to those of Section \ref{constitutive}. Owing to this consideration, we will mainly sketch the proofs of the forthcoming results by heavily referring to the corresponding material in Section \ref{constitutive}.

\paragraph{An equivalent problem.} It is convenient to introduce yet another equivalent formulation of problem \eqref{S}-\eqref{E} by replacing the variable $\, u \,$ by $\, v = u - \uD$. The main advantage of this change of variables is that the energetic formulation for $\, (v,z)\,$ takes values in the fixed phase space $\, \Y_0:=\Y(0)$. Indeed, in the same spirit of Lemma \ref{struc}, one readily 
computes that 
$$\W_{\rho,\nu}(u,z)- \lan \ell , u \ran = \W_{\rho,\nu}(v,z) +\int_\Omega \C(\epsi(v) - z):\epsi(\uD) - \lan \ell , v \ran + \CC(\epsi(\uD))- \lan \ell , \uD \ran.$$
Hence, one checks that $\, (u,z) \,$ is an energetic solution if and only if $\, (v,z):t \mapsto \Y_0 \,$ is such that, for all $\, t \in [0,T]$,
\begin{eqnarray}
&&\!\!\!\!\!\!\!\!\!\!\!\!\!\!\!\!\!\!\!\!\!\!\!\!\!\!\!\!\!\!\!\!  (v(t),z(t))\in \S(t):=\Big\{(v,z) \in \Y_0 \ \ \text{such that}, \ \ \forall (\overline v, \overline z) \in  \Y_0
,\nonumber\\
&&\W_{\rho,\nu}(v,z) - \lan L(t),(v,z) \ran \leq \W_{\rho,\nu}(\overline v, \overline z)  - \lan L(t),(\overline v,\overline z)\ran+\D(z - \overline z) \ran \Big\},  \label{SS}\\
&&\!\!\!\!\!\!\!\!\!\!\!\!\!\!\!\!\!\!\!\!\!\!\!\!\!\!\!\!\!\!\!\!\W_{\rho,\nu}(v(t),z(t)) -  \lan L(t),(v(t),z(t)) \ran + q(t)+ \Diss_\D(z,[0,t]) \nonumber\\
&& = \W_{\rho,\nu}(v(0),z(0)) - \lan L(0),(v(0),z(0)) \ran + q(0) \nonumber \\
&&- \int_0^t  \lan \dot\ell(s),v(s)\ran\, ds- \int_0^t  \lan \dot\ell(s),\uD(s)\ran\, ds,\label{EE}
\end{eqnarray}
where we have denoted by $\, L:[0,T]\rightarrow (\Y_0)' \,$ the functional
$$\lan L(t), (v,z)\ran := - \int_\Omega \C(\epsi(v)- z): \epsi(\uD(t)) + \lan \ell(t),v\ran\quad \forall (v,z) \in \Y_0, \ t \in [0,T].$$
Here $\,\lan \cdot,\cdot \ran\,$ is used for the duality pairing between $\, (\Y_0)' \,$ and $\, \Y_0$, as well. Moreover, the function $\, q:[0,T]\rightarrow \Rz \,$ is defined as
$$q(t):= \CC(\uD(t)) - \lan \ell(t), \uD (t) \ran \quad \forall t \in [0,T].$$
We shall explicitly observe that $\, \uD \in W^{1,1}(0,T;H^1(\Omega;\Rz^3))\,$ and $\, \ell \in W^{1,1}(0,T;(H^1(\Omega,\Rz^3))')\,$ entail that $\, L \in W^{1,1}(0,T;(\Y_0)')\,$ and $\, q \in W^{1,1}(0,T)$.

 From now on, we will focus on problem \eqref{SS}-\eqref{EE} and leave to the reader the straightforward interpretation of the forthcoming results for our original variable $\, u$. Let us start from the following existence result.

\begin{theorem}[Existence for $\, \boldsymbol \nu\boldsymbol >\boldsymbol 0$]\label{const2}
Let $\, \nu>0\,$ and $\, \rho \geq 0$. Given $\, L \in W^{1,1}(0,T;(\Y(0))')$, $\, q \in W^{1,1}(0,T)$,  and  $\, (v_0,z_0)\in \S(0) $, there exists an energetic solution $\, (v,z)\,$ to \eqref{SS}-\eqref{EE} such that $\, (v(0),z(0))=(v_0,z_0)$. Moreover $\, (v,z) \in W^{1,1}(0,T;\Y_0)$.
\end{theorem}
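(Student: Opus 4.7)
The plan is to carry over the proof scheme of Theorem~\ref{const} from the zero-dimensional constitutive setting to the infinite-dimensional boundary-value setting, using the time-incremental procedure already set up in Section~\ref{INCRE}. Specifically, I would pick a sequence of partitions $P^n$ of $[0,T]$ with diameters $\tau^n \to 0$, and solve the incremental minimization problems \eqref{pass} iteratively starting from $(v_0,z_0)$, obtaining right-continuous piecewise-constant interpolants $(v^n,z^n):[0,T]\to \Y_0$. By Lemma~\ref{apri}, $\sup_n \big(\sup_{t\in[0,T]} \W_{\rho,\nu}(v^n(t),z^n(t)) + \Diss_\D(z^n,[0,T])\big)<\infty$. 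Since $\nu>0$ and the quadratic form $\A$ is coercive on $\Y_0$ by Korn's inequality \eqref{korn}, this yields uniform bounds for $(v^n(t),z^n(t))$ in $\mathcal{U}\times \mathcal{Z}^\nu$, i.e.\ in $H^1\times H^1$.

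Next I would apply an infinite-dimensional version of Helly's selection principle to the $\D$-bounded-variation functions $z^n$ (with values in $L^1(\Omega;\Rzd)$, but $H^1$-bounded pointwise), extracting a (not relabelled) subsequence and limits $z(t)$ and a non-decreasing $\phi$ such that $z^n(t)\rightharpoonup z(t)$ weakly in $H^1(\Omega;\Rzd)$ for every $t\in[0,T]$, $\Diss_\D(z^n,[0,t])\to \phi(t)$, and $\Diss_\D(z,[s,t])\leq \phi(t)-\phi(s)$, exactly in the spirit of \eqref{propf}--\eqref{propfi}. For each fixed $t$, the map $\mathcal{L}:z\mapsto v$ sending $z$ to the unique minimizer $v$ of $\W_{\rho,\nu}(\cdot,z)-\lan L(t),(\cdot,z)\ran$ on $\Y_0$ is Lipschitz continuous by Lemma~\ref{continuous_dependence}, so $v^n(t)\to v(t):=\mathcal{L}z(t)$ strongly in $\mathcal{U}$.

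I would then verify closure of the graph $\S:=\cup_t (t,\S(t))$ by the same lower-semicontinuity argument used for $\S$ in Theorem~\ref{const}: the quadratic and nonlocal terms in $\W_{\rho,\nu}$ are weakly lower semicontinuous, $\D$ is continuous in $L^1$, and $L$ is continuous in $t$, so if $(t_k,v_k,z_k)\in\S$ converges with $v_k\to v$ in $\mathcal{U}$ and $z_k\rightharpoonup z$ in $\mathcal{Z}^\nu$ (hence strongly in $L^2$ via compact embedding), and one tests stability against an arbitrary competitor $(\overline v,\overline z)\in\Y_0$ using a trivial mutual recovery sequence (same competitor for all $k$), then $(t,v,z)\in \S$. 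This yields \eqref{SS} for $(v(t),z(t))$ at every $t$, via the same argument involving $\tau^n(t)$ as in Theorem~\ref{const}. The upper energy estimate \eqref{EE}$_\leq$ is obtained by summing the incremental minimality inequality as in \eqref{preupper}--\eqref{upper} and passing to the liminf, exploiting weak lower semicontinuity of $\W_{\rho,\nu}$ and dominated convergence in the $\dot L$-integral (the uniform bound on $v^n$ in $H^1$ is crucial here).

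The lower energy estimate follows from the stability \eqref{SS} at intermediate times by taking a refining sequence of partitions $Q^m$ of $[0,t]$, summing telescopically as in \eqref{lower2}, and passing to the limit via dominated convergence; this part needs only the absolute continuity of $L$ and the uniform boundedness of $v(\cdot)$ in $\mathcal{U}$, which follows once absolute continuity is established. Absolute continuity of $(v,z)$ is derived, exactly as in \eqref{lippa}, by testing stability at $s$ against $(v(t),z(t))$, combining with the upper energy estimate, and applying uniform convexity of $\W_{\rho,\nu}$ together with the absolute continuity of $L$ and $q$. Once the lower and upper energy estimates coincide, the identity $\Diss_\D(z,[0,t])=\phi(t)$ follows by standard arguments.

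The main obstacle compared with the zero-dimensional case of Theorem~\ref{const} is guaranteeing closure of the stability set $\S$ and enough compactness to pass to the limit: here the fact that $\nu>0$ is essential, because it forces $z^n$ to be bounded in $H^1(\Omega;\Rzd)$ and hence compact in $L^2(\Omega;\Rzd)$, which both rescues the mutual-recovery-sequence construction for stability and makes the quadratic coupling term $\int_\Omega \C(\epsi(v)-z):(\epsi(v)-z)$ continuous with respect to the relevant weak-strong convergences. Without a careful use of this compactness, one cannot rule out that the weak limit loses the stability property; the rest of the argument is essentially a transcription of the proof of Theorem~\ref{const}, with $\mathcal{L}:z\mapsto v$ playing the role of the finite-dimensional elastic response operator.
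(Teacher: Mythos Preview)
Your proposal is correct and follows essentially the same route as the paper's own (sketched) proof: time-discretization via \eqref{min4}, uniform bounds from Lemma~\ref{apri}, the generalized Helly selection for $z^n$, identification of $v(t)$ through the elastic response map, closure of the stable set via weak lower semicontinuity of $\W_{\rho,\nu}$ and the compact embedding $H^1\hookrightarrow L^2$ (which the paper phrases as ``weak continuity of $\D$ in $H^1$''), the upper energy estimate by $\liminf$ in \eqref{preupper2}, absolute continuity from stability plus uniform convexity, and the lower energy estimate via \eqref{lower2}. The only minor slip is the citation for the continuity of $\mathcal{L}$: Lemma~\ref{continuous_dependence} concerns the full incremental problem with $\overline z$ as datum, not the map $z\mapsto v$ at fixed $z$; the correct (and simpler) argument is that $\mathcal{L}$ is affine because the Euler--Lagrange equation for $v$ is linear, which the paper uses to pass first to weak convergence of $v^n(t)$ and then upgrades to strong via the energy-convergence argument \eqref{lim}.
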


We shall not provide here a full proof of this result. Indeed, it suffices to suitably adapt the machinery of Lemma \ref{const} to the situation of \eqref{SS}-\eqref{EE}. In particular, we argue again by discretizing the problem on a sequence of partitions $\, P^n\,$ with diameter going to zero. The corresponding incremental problems 
\begin{equation}
  \label{min4}
    (v_i,z_i)\in\argmin_{(v,z) \in  \Y_0}\big(\W_{\rho,\nu}(v,z)  -\lan L(t^n_i), u\ran + \D(z- z^n_{i-1}) \big)\quad \text{for} \  \ i=1, \dots, N^n,
\end{equation}
will turn out to be solvable by means of the results of Section \ref{incremental}. Namely, we can introduce some right-continuous and piecewise constant interpolant $\, (v^n,z^n)\,$ of the discrete solution on the partition $\, P^n$. Moreover, we exploit Lemma \ref{apri} which entails that
$$\sup_{t \in [0,T]} \W_{\rho,\nu}( v^n(t),  z^n(t)) \ \ \text{and} \ \  \Var_{[0,T]}( z^n) \ \ \text{are bounded independently of} \ \ n.$$
Indeed, the latter bound depends now on $\, \W_{\rho,\nu}(v_0,z_0)$, $\, \|L\|_{W^{1,1}(0,T;(\Y(0)))')}$, and $\, \|q\|_{W^{1,1}(0,T)}$. 

As for the limit, we will make use of some extended version of Helly's principle \cite[Thm. 3.1]{Mainik-Mielke05} and find a (not relabeled) subsequence of partitions and a non-decreasing function $\, \phi:[0,T]\rightarrow [0,+\infty)\,$ such that
\begin{gather}
    z^n(t) \rightarrow z(t) \ \ \text{weakly in} \ \ H^1(\Omega;\Rzd) \ \ \text{and} \ \ \Diss_\D( z^n,[0,t])\rightarrow \phi(t)\quad \text{for all} \ \ t \in [0,T],\nonumber\\
\Diss_\D( z,[s,t])\leq \phi(t) - \phi(s) \quad \forall [s,t]\subset [0,T].\nonumber
\end{gather}
Indeed, here we have used in a crucial way that $\, \nu >0 $, i.e., the sublevels of $\, \W_{\rho,\nu} \,$ are compact in $\, L^2(\Omega; \Rzn)\times L^2(\Omega;\Rzd)$. Moreover, we have that $\,  v^n(t)= {\cal L} z^n(t)$, ${\cal L }\,$ being linear, and  $\, {\cal L} z^n(t)\to {\cal L} z(t) = v(t)\,$ weakly in $\,H^1(\Omega;\Rz^3) \,$ for all $\, t \in [0,T]$, where $\, (v(t),0) \in \Y_0$. 

The set of stable trajectories $\, \S:= \cup_{t \in[0,T]} (t,\S(t))\,$ is closed with respect to the weak topology of $\, \Y$. Namely, letting $\, (t_k,v_k,z_k)\in \S\,$ with $\, t_k \to t\,$ and $\, (v_k,z_k)\to (v,z)\,$ weakly in $\,\Y_0$, we readily exploit the lower semicontinuity of $\, \W_{\rho,\nu}$, the weak continuity of $\, \D \,$ in $\, H^1(\Omega;\Rzd)$, and the continuity of $\, L \,$ and get that 
\begin{gather}
  \W_{\rho,\nu}(v,z) + \lan L(t), (v,z)\ran \leq \liminf_{k \to +\infty} \big(\W_{\rho,\nu}(v_k,z_k) + \lan L(t_k), (v_k,z_k)\ran\big)\nonumber\\
 \leq \liminf_{k \to +\infty}\big(\W_{\rho,\nu}(\overline v,\overline z) + \lan L(t_k), (\overline v,\overline z)\ran + \D(z_k -\overline z)\big)=\W_{\rho,\nu}(\overline v,\overline z) + \lan L(t), (\overline v,\overline z)\ran + \D(z -\overline z)\nonumber
\end{gather}
for all $\, (\overline v,\overline z) \in \Y_0$. Namely, $\, (t,v,z) \in \S\,$ and the stability condition \eqref{SS} easily follows. Moreover, the initial condition is fulfilled by construction and the uniform convexity of $\, \W_{\rho,\nu} \,$ along with stability entail that the whole sequence $\,  \epsi(v^n(t)) \,$ actually converges to $\, \epsi(v(t))$.

As for to prove that $\, (v,z)\,$ fulfills \eqref{EE} we readily deduce from the above stated convergences and lower semicontinuity arguments (see \eqref{preupper}) that the equivalent of \eqref{upper} holds. Indeed we have that
\begin{eqnarray}
&&\W_{\rho,\nu}(v^n(t), z^n(t)) -  \lan L(\tau^n(t)),(v^n(t),z^n(t))\ran + q(\tau^n(t))
+\Diss_\D(z^n,[0,\tau^n(t)])\nonumber\\
&&\qquad\qquad\qquad\qquad \leq \W_{\rho,\nu}( v_{0},z_{0}) -  \lan L(0),(v _{0},z_{0})\ran + q(0)\nonumber\\
&&\qquad\qquad\qquad\qquad-\int_{0}^{\tau^n(t)} \lan \dot \ell(s), v^n(s)\ran\, ds  -\int_{0}^{\tau^n(t)} \lan \dot \ell(s),\overline u(s)\ran\, ds.\label{preupper2}
\end{eqnarray}
and we simply pass to the $\, \liminf \,$ as $\, n \rightarrow +\infty\,$ in order to get that
\begin{eqnarray}
&&\!\!\!\!\W_{\rho,\nu}(v(t),z(t)) -   \lan L(t),(v(t),z(t)) \ran + q(t) + \Diss_\D(z,[0,t]) \nonumber\\
&&\ \leq \W_{\rho,\nu}(v_0,z_0) - \lan L(0),(v_0,z_0) \ran + q(0) - \int_0^t  \lan \dot\ell(s),v(s)\ran\, ds -\int_{0}^{t} \lan \dot \ell(s),\overline u(s)\ran\, ds.\label{upper3}
\end{eqnarray}
Moreover, again by stability, one has that $\, \W_{\rho,\nu}(v^n(t),z^n(t))\rightarrow \W_{\rho,\nu}(v(t),z(t))\,$ as well (see \eqref{lim}). As a by-product, the above stated weak convergence for $\, (v^n(t),z^n(t))\,$ turns out to be actually strong in $\, \Y$. 

Exactly as in Theorem \ref{const}, the absolute continuity of $\, (v,z)\,$ follows at once from that of $\, L \, $ and $\, q$, relation \eqref{upper3}, the uniform convexity of $\, \W_{\rho,\nu}$, and stability \eqref{SS}. In particular, we are in the position of reproducing the same argument as in \eqref{lower2} and, exploiting once more stability and the continuity of data, obtain the upper energy estimate as well. Namely, one has that $\, \phi(t) = \Diss_\D(z,[0,t]) \,$ for all $\, t \in [0,T]$. The existence proof is hence complete.

Again, energetic solutions corresponding to Lip\-schitz continuous data turn out to be Lip\-schitz continuous as well.

\begin{lemma}[Lipschitz continuity]
  Under the assumptions of Theorem \emph{\ref{const2}}, whenever $\, L \in W^{1,\infty}(0,T;(\Y_0)') \,$  and $\, q \in W^{1,\infty}(0,T)$, we have $\, (\epsi,z) \in W^{1,\infty}(0,T;\Y_0)$.
\end{lemma}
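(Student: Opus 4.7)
The plan is to transcribe, in the infinite-dimensional setting of \eqref{SS}--\eqref{EE}, the derivation of the Lipschitz estimate \eqref{lippa} from the proof of Theorem \ref{const}. Indeed, the existence argument sketched for Theorem \ref{const2} already establishes absolute continuity of $\, (v,z)\,$ via a bound of the form $\, \|(v(t)-v(s),z(t)-z(s))\|_{\Y_0}\leq c\int_s^t m(r)\, dr\,$ for some $\, m\in L^1(0,T)\,$ depending linearly on $\, \|\dot L(r)\|_{(\Y_0)'}$, $\, |\dot q(r)|\,$ and on $\, \|\dot \ell(r)\|_{(H^1(\Omega;\Rz^3))'}$. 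Once this estimate is in place, the conclusion is immediate: under the extra assumption $\, L\in W^{1,\infty}(0,T;(\Y_0)')\,$ and $\, q\in W^{1,\infty}(0,T)$, the integrand is in $\, L^\infty$, and the integral is bounded by $\, c|t-s|$, yielding the asserted $\, W^{1,\infty}\,$ regularity.

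Concretely, I would fix $\, [s,t]\subset[0,T]\,$ and use the stability $\, (v(s),z(s))\in \S(s)\,$ tested with $\, (v(t),z(t))\in \Y_0\,$ together with the uniform convexity of $\, \W_{\rho,\nu}\,$ with constant $\, \alpha $. This gives
\begin{gather*}
\alpha\|(v(t)-v(s),z(t)-z(s))\|_{\Y_0}^2\\
\leq \W_{\rho,\nu}(v(t),z(t))-\W_{\rho,\nu}(v(s),z(s))-\lan L(s),(v(t)-v(s),z(t)-z(s))\ran+\D(z(t)-z(s)).
\end{gather*}
I would then substitute the difference of free energies using the energy equality \eqref{EE} between $\, s\,$ and $\, t$, absorb the dissipation term by $\, \D(z(t)-z(s))\leq \Diss_\D(z,[s,t])$, and combine the remaining terms as in the derivation of \eqref{lippa}. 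The resulting right-hand side has the structure $\, \int_s^t \lan \dot L(r),(v(t),z(t))\ran\,dr + \int_s^t\dots\,dr $, with the various integrands being pointwise bounded by data.

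The key technical step, exactly as in the finite-dimensional argument leading to \eqref{lippa}, is to bootstrap the quadratic bound to a linear one by a Gronwall-type trick: one decomposes $\, (v(t),z(t))=(v(r),z(r))+\bigl((v(t),z(t))-(v(r),z(r))\bigr)\,$ inside the duality products, isolates a self-referential term of the form $\, \int_s^t\|\dot L(r)\|_{(\Y_0)'}\|(v(t)-v(r),z(t)-z(r))\|_{\Y_0}\,dr$, and closes the estimate via Gronwall together with the a priori $\, L^\infty\,$ bound on $\, (v,z)\,$ furnished by Lemma~\ref{apri}. This produces the desired bound $\, \|(v(t)-v(s),z(t)-z(s))\|_{\Y_0}\leq c\int_s^t(\|\dot L(r)\|_{(\Y_0)'}+|\dot q(r)|+\|\dot\ell(r)\|_{(H^1(\Omega;\Rz^3))'})\,dr$.

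The main obstacle is bookkeeping in the Gronwall step: one must ensure that the constants appearing in the self-referential estimate depend only on $\, \alpha $, on the a priori energy and dissipation bounds of Lemma \ref{apri}, and on the (now essentially bounded) data norms, so that no implicit dependence on $\, t-s\,$ survives when linearising the quadratic inequality. Once this is verified, plugging $\, L,\,q,\,\ell\in W^{1,\infty}\,$ into the final integral estimate gives $\, \|(v(t)-v(s),z(t)-z(s))\|_{\Y_0}\leq c|t-s|\,$ for all $\, [s,t]\subset[0,T]$, which is precisely the claimed $\, W^{1,\infty}(0,T;\Y_0)\,$ regularity.
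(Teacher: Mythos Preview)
Your proposal is correct and follows exactly the route the paper has in mind: the paper does not spell out a proof for this lemma but simply observes (just as in the finite-dimensional Corollary after Theorem~\ref{const}) that the Lipschitz regularity is a direct consequence of the estimate analogous to \eqref{lippa}, which is obtained from stability \eqref{SS}, the upper energy estimate \eqref{upper3}, the uniform convexity of $\W_{\rho,\nu}$, and Gronwall's lemma. Your bookkeeping of the $q$- and $L$-terms is the only extra ingredient over the zero-dimensional case, and it is handled correctly.
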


\paragraph{Existence by smoothness.} The above sketched existence proof exploits in a crucial way the compactness of the sublevels of $\, \W_{\rho, \nu}\,$ for $\, \nu >0\,$ in the weak topology of $\, H^1(\Omega;\Rz^3) \times H^1(\Omega; \Rzd) \,$ and works for any $\, \rho> 0$. An alternative approach to existence of solutions of the energetic formulation is however available in the  smooth situation $\, \rho >0\,$ by means of the construction of \cite[Sec. 7]{Mielke-Theil04}, for instance. A possible advantage of this perspective is that of gaining explicit convergence rates. We shall address this issue elsewhere.

In the above mentioned smooth situation $\, \rho >0 \,$ no compactness is
assumed for energy-bounded states but the energy functional $\,
\W_{\rho,\nu}:\Y \rightarrow [0,+\infty)\,$ is required to be $\,
C^{2,1}$. This again forces\linebreak $\, \nu>0$. Namely, given $\, h \in
C^{2,1}(\Rz)\,$ \AAAS with $ 
%%\, h',
\, h'' \in L^\infty(\Rz)$, \AAAE one has that the functional \linebreak$\, {\cal H} : L^2(\Omega;\Rzd)\rightarrow \Rz\,$ defined by
$${\cal H}u:= \int_\Omega h(u(x))dx \quad \text{for } u \in L^2(\Omega;\Rzd)$$
is $\, C^{2,1}\,$ if and only if $\, h \,$ is quadratic (and in this case $\,{\cal H} \in C^\infty$). On the other hand, $\, {\cal H} \,$ is $\, C^{2,1} \,$ on $\, H^1(\Omega;\Rzd)$. This fact entails that $\, \W_{\rho,\nu }\,$ is $\, C^{2,1}\,$ on $\, \Y \,$ if and only if $\, \nu>0$.

\paragraph{Continuous dependence.} We are in the position of reproducing the continuous dependence result of Section \ref{constitutive} in the present framework and for $\, \rho,\, \nu>0$. Once again continuous dependence relies on uniform convexity and $\, C^{2,1}\,$ continuity of the energy functional. In particular, the assumption $\, \nu >0$, which of course plays no role in Lemma \ref{con_dep_lemma}, is actually needed here (see above). 
\paragraph{Properties of the approximations.} The time discretization technique described above has of course some interest in itself. Let us collect for convenience some related result in the following.

\begin{lemma}\label{dis2}
 Let $\, \nu>0$. Under the assumptions of Theorem \emph{\ref{const2}}, the incremental solutions $\, ( v^n,  z^n)\,$ of problem \eqref{min4} for partitions $\, P^n\,$ with diameters $\,\tau^n\,$ going to $\,0\,$ are such that, possibly extracting a not relabeled subsequence, for all $\, t \in [0,T]$,
\begin{eqnarray}
  &&  z^n \rightarrow z\quad \text{strongly in}  \ \ C([0,T];H^1(\Omega;\Rzd)) ,\nonumber\\
&& \Diss_\D(z^n,[0,t]) \rightarrow \Diss_\D(z,[0,t]), \nonumber\\
 && v^n(t) \rightarrow v(t)\quad \text{strongly in} \ \  H^1(\Omega;\Rz^3),\nonumber\\
&&\W_{\rho,\nu}(v^n(t), z^n(t)) \rightarrow \W_{\rho,\nu}( v(t), z(t)),\nonumber
\end{eqnarray}
for some $\, (v,z) \,$ which solves \eqref{SS}-\eqref{EE}.  As $\, \rho > 0 \,$ the whole sequence is convergent to the unique energetic solution $\, (v,z)\,$ and there exists a positive constant $\, c \,$ depending on $\, \alpha$, $\,\| \W_{\rho,\nu}\|_{C^{2,1}(\Y_0;\Rz)}$, $\, (v_0,z_0)$, $ \,\| L\|_{W^{1,1}(0,T;(\Y(0))')}$, and $ \,\| q\|_{W^{1,1}(0,T)}\,$ such that
\begin{gather}
  \|(v - v^n)(t)\|_{H^1(\Omega;\Rz^3)} + \|(z - z^n)(t)\|_{H^{\nu}(\Omega;\Rzd)}  \leq c (\tau^n)^{1/2} \quad \forall t \in [0,T].
\end{gather}
\end{lemma}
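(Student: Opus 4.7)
The plan is to harvest most of the lemma directly from the proof of Theorem~\ref{const2} sketched above, and then carry out two additional pieces: the whole-sequence convergence for $\rho>0$, and the $\tau^{1/2}$ error bound.

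First I would extract, along a subsequence, the limits already produced in the existence proof: weak convergence $z^n(t)\rightharpoonup z(t)$ in $H^1(\Omega;\Rzd)$ (pointwise in $t$), $v^n(t)\rightharpoonup v(t)$ in $H^1(\Omega;\Rz^3)$ via the linear selection $v(t)={\cal L}z(t)$, and a monotone function $\phi$ with $\Diss_\D(z^n,[0,t])\to\phi(t)$; the equality $\phi(t)=\Diss_\D(z,[0,t])$ then comes from the two-sided energy inequality proved in Theorem~\ref{const2}. To upgrade to the strong convergences stated in the lemma I would run exactly the chain \eqref{lim}: combine the discrete stability $(v^n(t),z^n(t))\in S(s^n(t))$ (which is available by the triangle inequality, as in \eqref{return2}) with lower semicontinuity of $\W_{\rho,\nu}$ and continuity of $L,q$ to conclude $\W_{\rho,\nu}(v^n(t),z^n(t))\to\W_{\rho,\nu}(v(t),z(t))$. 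Since $\W_{\rho,\nu}$ is uniformly convex on $\Y_0$ with constant $\alpha$, convergence of energies plus weak convergence gives strong convergence of both $v^n(t)$ in $H^1(\Omega;\Rz^3)$ and $z^n(t)$ in $H^1(\Omega;\Rzd)$. The uniform-in-$t$ estimate \eqref{lippa} (adapted to the present setting, which gives equi-continuity of each $z^n$ and of the limit in the $H^1$-norm through the bound on $\Diss_\D$ and uniform convexity) then promotes pointwise strong convergence of $z^n$ to uniform convergence on $[0,T]$.

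Next, for $\rho>0$ I would invoke the continuous dependence result recalled in the paragraph preceding the lemma (i.e.\ the infinite-dimensional analogue of Theorem~\ref{con_dep_lemma}, valid since $\W_{\rho,\nu}\in C^{2,1}(\Y;\Rz)$ and uniformly convex when $\rho,\nu>0$). Uniqueness of the energetic solution follows, so every subsequential limit of $(v^n,z^n)$ coincides with the same $(v,z)$, and hence the whole sequence converges.

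The main technical obstacle is the rate estimate $\|(v-v^n)(t)\|_{H^1}+\|(z-z^n)(t)\|_{H^1}\le c(\tau^n)^{1/2}$. I would follow the scheme of \cite[Sec.~7]{Mielke-Theil04} / \cite[Thm.~4.3]{Mielke05}, adapted to the Hilbert-space setting \eqref{SS}-\eqref{EE}: write $y:=(v,z)$, $y^n:=(v^n,z^n)$ and form the quantity
\begin{equation*}
\gamma^n(t) := \bigl\langle \nabla\W_{\rho,\nu}(y(t)) - \nabla\W_{\rho,\nu}(y^n(t)),\, y(t) - y^n(t)\bigr\rangle,
\end{equation*}
which by uniform convexity dominates $\alpha\|y(t)-y^n(t)\|_{\Y_0}^2$. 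Testing the continuous stability \eqref{SS} at $t$ against $\overline y=y^n(t)$ and the discrete stability at $\tau^n(t)$ against $\overline y=y(t)$, and combining the two resulting inequalities using the $C^{2,1}$-regularity of $\W_{\rho,\nu}$ (so that cross terms of the form $\nabla^2\W_{\rho,\nu}[\,\cdot\,,\,\cdot\,]$ and remainders controlled by $\|\nabla^2\W_{\rho,\nu}\|_{C^{0,1}}\|y-y^n\|^3$ appear), one obtains a Gronwall-type inequality
\begin{equation*}
\gamma^n(t)\le \gamma^n(0) + c\!\int_0^t\!\!\big(\|\dot L\| + |\dot q| + \|\dot y\| + \|\dot y^n\|\big)\gamma^n\,ds + R^n(t),
\end{equation*}
where the residual $R^n(t)$ collects the discrepancies between the discrete and continuous energy identities evaluated at $\tau^n(t)$ vs.\ $t$. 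Using the Lipschitz regularity of $L,q,\uD$ (bounded by $\tau^n\|\cdot\|_{W^{1,1}}$ on each subinterval) together with the $W^{1,\infty}$-type control of $y$ and $y^n$ on $[0,T]$ inherited from uniform convexity, one shows $R^n(t)\le c\,\tau^n$, whence Gronwall's lemma yields $\gamma^n(t)\le c\,\tau^n$ and the square-root rate follows. The delicate point is precisely the estimate of $R^n$: the continuous dissipation $\Diss_\D(z,[s^n(t),\tau^n(t)])$ and the single-step dissipation $\D(z^n(\tau^n(t))-z^n(s^n(t)))$ differ by a term requiring the $C^{2,1}$-bound on $\nabla^2\W_{\rho,\nu}$ combined with the $\tau^n$-control of the data increments $L(\tau^n(t))-L(t)$ and $q(\tau^n(t))-q(t)$, and this is where the specific hypothesis $\nu>0$ (ensuring $\W_{\rho,\nu}\in C^{2,1}(\Y;\Rz)$) is used in an essential way.
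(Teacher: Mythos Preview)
Your proposal is essentially correct and follows the same route as the paper, which in fact does not give a separate proof of this lemma at all: the paper simply states it as a collection of facts already obtained within the sketched proof of Theorem~\ref{const2} (pointwise weak convergence via Helly, stability in the limit, energy convergence via the chain~\eqref{lim}, strong convergence from uniform convexity, and the identification $\phi(t)=\Diss_\D(z,[0,t])$), together with the reference to \cite[Sec.~7]{Mielke-Theil04} and \cite[Thm.~4.3]{Mielke05} for the $\tau^{1/2}$ rate when $\rho>0$.

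One small imprecision worth fixing: you invoke ``equi-continuity of each $z^n$'' via \eqref{lippa}, but the incremental solutions are right-continuous and piecewise constant, so \eqref{lippa} itself (a bound on the continuous solution) is not what you want. What actually gives the uniform-in-$t$ upgrade is the \emph{discrete} Lipschitz-type bound: since $(v^n_{i-1},z^n_{i-1})$ is stable at $t^n_{i-1}$ (hence minimizes $\W_{\rho,\nu}-\langle L(t^n_{i-1}),\cdot\rangle+\D(\cdot-z^n_{i-1})$) and $(v^n_i,z^n_i)$ minimizes the same functional with $L(t^n_i)$ in place of $L(t^n_{i-1})$, Lemma~\ref{continuous_dependence} yields
\[
\|(v^n_i,z^n_i)-(v^n_{i-1},z^n_{i-1})\|_{\Y_0}\;\le\; c\,\|L(t^n_i)-L(t^n_{i-1})\|_{(\Y_0)'}\;\le\; c\int_{t^n_{i-1}}^{t^n_i}\|\dot L\|\,ds.
\]
This controls the jumps of $z^n$ uniformly by the modulus of absolute continuity of $L$, and together with the pointwise strong convergence $z^n(t)\to z(t)$ and the absolute continuity of $z$ it yields the claimed uniform convergence on $[0,T]$. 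With this correction your outline matches what the paper intends.
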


\paragraph{Full space-time approximations.} We conclude this analysis by commenting on the possibility of performing a full space-time approximation of the problem. To this aim let us refer to the above introduced notations, consider some approximation parameter $\, h>0 $, and reduce the energetic formulation \eqref{SS}-\eqref{EE} to the spaces $\, \Y_{h,0} \,$ exhausting $\, \Y_0$. We shall be considering in particular some discrete values $\, \{(v^n_{h,i},z^n_{h,i})\}_{i=0}^{N^n}\,$ defined inductively from suitable initial data $\, (v_{h,0},z_{h,0}) \in \Y_{h,0}\,$ by letting $\, (v^n_0,z^n_0)=(v_{h,0},z_{h,0})\,$ and solving the following incremental problem
\begin{gather}
  (v^n_{h,i},z^n_{h,i})\in\argmin_{(v,z) \in  \Y_{h,0}}\big(\W_{\rho,\nu}(v,z) -\lan L(t^n_i), u\ran + \D(z- z^n_{h,i-1})\big)\quad \text{for} \ \ i=1, \dots, N^n.\label{min5}
\end{gather}
Again, the unique solvability of the latter problems is ensured by uniform convexity and lower semicontinuity, i.e., it is independent of $\, h$. We will denote as usual by $\,(v^n_h,z^n_h)\,$ the corresponding incremental solutions. 

Our first observation is that, arguing exactly as above, whenever the assumptions of Theorem \ref{const2} are fulfilled and the initial data are bounded in energy independently of $\, h$, the usual bound
\begin{equation}\label{bound99}
\sup_{t \in [0,T]} \W_{\rho,\nu}( v^n_h(t),  z^n_h(t)) \ \ \text{and} \ \ \Diss_\D( z^n_h,[0,T]) \ \ \text{are bounded indep. of $\, n \,$ and $\, h$},
\end{equation}
can be obtained. 

\paragraph{Convergence for the space-discretized problem.} Assume $\, h>0$. Then, we are in the position of reproducing the argument of Theorem \ref{const2} and deduce the existence of a limiting space-approximated energetic solution $\, (v_h,z_h)$. To this aim, the restriction $\, \nu >0 \,$ could even be avoided whenever $\,\Y_h \,$ are chosen to be finite dimensional, for instance. Moreover, the fully discrete solution $\,(v^n_h,z^n_h)\,$ converges to $\, (v_h,z_h)$ in the sense of Lemma \ref{dis2} as $\, n \rightarrow +\infty$. We shall not give a detailed proof of these facts but rather limit ourselves in observing that the energetic formulation \eqref{SS}-\eqref{EE} can be rewritten in $\, \Y_{h,0} \,$ with no intricacy. In particular, estimate \eqref{bound99} is again the starting point for the limit procedure.

Once the energetic solution $\, (v_h,z_h): [0,T] \rightarrow \Y_{h,0}\,$ is found (uniqueness again follows in case $\, \rho >0$) we are in the condition of considering the limit as $\, h \,$ goes to zero as well. To this aim, we shall assume that the corresponding initial data converge together with their energies, namely
$$W_{\rho,\nu}(v_{h,0},z_{h,0})  - \lan L(0),(v_h(0),z_h(0)) \ran \rightarrow W_{\rho,\nu}(v_{0},z_{0}) - \lan L(0),(v_0,z_0) \ran. $$
 In this case, it is straightforward to check that the bound \eqref{bound99} is preserved while passing to the limit in $\, h$. Assuming $\, \nu >0$, this entails the possibility of extracting a (not relabeled) subsequence pointwise converging to an energetic solution $\, (v,z): [0,T] \rightarrow\Y_0$. In case $\, \rho >0$, the latter is indeed the unique energetic solution whose existence is stated in Theorem \ref{const2}. In order to check this we briefly comment on relations \eqref{SS}-\eqref{EE}. As for \eqref{SS}, let us fix $\, t \in [0,T]\,$ and any $\, (\overline v, \overline z)\in \Y_0\,$ and exploit the stability of $\, (v_h(t),z_h(t))\,$ in order to get that, for all $\, (\overline v, \overline z) \in \Y$,
\begin{gather}
\W_{\rho,\nu}(v_h(t),z_h(t)) - \lan L(t),(v_h(t),z_h(t)) \ran \nonumber\\
\leq \W_{\rho,\nu}( p_h^\nu( \overline v, \overline z))  - \lan L(t),p_h^\nu (\overline v, \overline z))\ran+\D(z_h - p^\nu_{h,2} (\overline v,\overline z)) . \nonumber
\end{gather}
Hence, the stability of $\, (v(t),z(t))\,$ follows by passing to the limit in $\, h$. As for the upper energy estimate we fix a uniform partition $\, Q^m:=\{s^m_j, \ j=0,\dots,M \ : \ s^m_j= jt/m\}$, exploit the upper energy estimate for $\, (v_h,z_h)$, and get that
\begin{gather}
  \W_{\rho,\nu}(v_h(t),z_h(t)) -   \lan L(t),(v_h(t),z_h(t)) \ran + q(t) + \sum_{j=1}^m \D(z_h(s^m_j) - z_h(s^m_{j-1})) \nonumber\\
 \leq \W_{\rho,\nu}(v_{h,0},z_{h,0}) - \lan L(0),(v_{h,0},z_{h,0}) \ran + q(0) \nonumber \\
- \int_0^t  \lan \dot\ell(s),v_h(s)\ran\, ds- \int_0^t  \lan \dot\ell(s),\overline u(s)\ran\, ds.\nonumber
\end{gather}
It hence suffices to pass to the limit in $\, h \,$ first and then in $\, m
\,$ in order to get the upper energy estimate for $\, (v,z)$. Finally, the
lower energy estimate for $\, (v,z)\,$ follows as above from the upper energy
estimate, stability, uniform convexity of $\, \W_{\rho,\nu}$, and the
continuity of $\, L \,$ and $\, q$. We refer to
\azzurro\cite{MieRou06,MiRoSt06} \nero for a full proof of the above convergence argument. However, we shall remark that no quantitative estimates for the approximations are given. 

\paragraph{Convergence for the time-discretized problem.} Let us consider now the limit as $\, h \,$ goes to $\, 0 \,$ first. Owing to Lemma \ref{conve} we are in the position of establishing a (quantitative) strong convergence result for the corresponding time discretized solutions $\, (v^n,z^n)$. Indeed, one could exhibit some explicit error control which however explodes with $\, n$. Moreover, in the case $\, \nu>0$, since $\, (v^n,z^n)\,$ are uniquely determined, the subsequent limit in $\, n \,$ can be taken exactly as above and the convergence to an energetic solution $\, (v,z)\,$ is ensured.

\paragraph{Joint convergence.} Assume now $\, \nu>0$. Owing to \eqref{bound99} we are of course in the position of passing to the limit with respect to both $\, n \,$ and $\, h \,$ simultaneously in $\, (v^n_h,z^n_h)$. By arguing as above the stability of the limit $\, (v,z)\,$ will follow at once by using the closedness of $\, \S \,$ and the convergence of projections. As for the upper energy estimate, we combine the above exploited techniques and pass to the $\, \liminf\,$ in the following relation (see \eqref{preupper2})
\begin{gather}
\W_{\rho,\nu}(v^n_h(t), z^n_h(t)) -  \lan L(\tau^n(t)),(v^n_h(t),z^n_h(t))\ran + q(\tau^n(t))
+ \Diss_\D( z^n_h,[0,\tau^n(t)])\nonumber\\
 \leq \W_{\rho,\nu}( v_{0,h},z_{0,h}) -  \lan L(0),(v _{0,h},z_{0,h})\ran + q(0)\nonumber\\
-\int_{0}^{\tau^n(t)} \lan \dot \ell(s), v^n_h(s)\ran\, ds  -\int_{0}^{\tau^n(t)} \lan \dot \ell(s),\overline u(s)\ran\, ds.\label{preupper3}
\end{gather}
Once the upper energy estimate is established, the uniform convexity of $\, \W_{\rho,\nu}$ the continuity of $\, L\,$ and $\, q $, and the stability of $\, (v,z)\,$ entail that also the lower energy estimate holds. Namely, $\, (v,z)\,$ is an energetic solution to \eqref{SS}-\eqref{EE} and it is unique as $\, \rho >0$.

Of course, whenever $\, \rho >0 \,$ we would be able to show some convergence of order $\, 1/2\,$ in time. On the other hand, by passing to the limit in time we loose the chance to estimate the error in space (see above). Hence, so far we are not able to provide an explicit space-time error bound for the joint limit procedure.

%%%%%%%%%%%%%%%%%%%%%%%%%%%%%%%%%%%%%%%%%%%
\section{The limits $\, \boldsymbol \rho\boldsymbol ,\, \boldsymbol \nu\boldsymbol \rightarrow \boldsymbol 0$.}\label{limitrho}

Up to this point, the parameters $\, \rho \,$ and $\, \nu \,$ have been systematically assumed to be fixed throughout the analysis. The limit $\, \nu \rightarrow 0 \,$ is however of some interest since it describes the behavior of the model toward its non-regularized limit. As for $\, \rho \,$ we have to mention that our modeling choice corresponds to the limit situation $\, \rho = 0 \,$. On the other hand the smooth situation $\, \rho > 0 \,$ is better suited for numerical implementation. Moreover, all problems are continuously dependent on data for $\, \rho >0\,$ while energetic evolutions are not known to be unique for $\, \rho =0$. 

In this section we shall discuss the possibility of obtaining suitable asymptotic results for $\, \rho\,$ and (possibly) $\, \nu \,$ going to zero within the constitutive relation, the minimum problem, the incremental problem, and the evolution problem. We will explicitly treat the space approximated case and discuss joint limits of parameters and time and/or space approximations. 

As a general remark, one should notice that the choice $\, \rho=\nu=0\,$ does not affect the well-posedness of the minimum problems since the uniform convexity of the corresponding functionals is preserved, this being true also for space approximations. Secondly, a priori bounds on sequences of solutions (either minimizing, incremental, or energetic) are usually available independently of the parameters. Whenever the compactness of sequences of solutions is obtained, the crucial feature in order to identify the limit of some possibly extracted subsequence is the $\, \Gamma$-convergence (see below) of the approximating functionals $\, W_\rho\,$ (in the zero-dimensional case) and $\, \W_{\rho,\nu} \,$ (in three dimensions). 

\paragraph{$\boldsymbol \Gamma$-convergence issues.} Let us collect here some preliminary remarks on the convergence properties of functions and functionals under consideration. The basic notion in this direction is of course that of $\, \Gamma$-convergence \cite{DeGiorgi-Franzoni75,DeGiorgi-Franzoni79}. The reader is referred to the monographs \cite{Attouch,DalMaso93} for a comprehensive discussion. Let us however recall here that, given a metric space $\, X \,$ and functions $\, g_n,\, g: X \rightarrow (-\infty,+\infty]$, we say that  $\, g_n \rightarrow g\,$ in the sense of $\, \Gamma-$convergence in $\,X\,$ iff
\begin{gather}
  g(x) \leq \liminf_{n \rightarrow +\infty} g_n(x_n) \quad \forall x_n \rightarrow x \ \ \ \text{and} \label{liminf}\\
 \forall x \in X \ \ \text{there exists} \ \ x_n  \rightarrow x \ \ \text{such that} \ \ g(x) \geq \limsup_{n \rightarrow +\infty} g_n(x_n). \label{recovery}
\end{gather}
We shall classically refer to \eqref{liminf} as {\it $\Gamma$-liminf inequality} and to $\, x_n \,$ in \eqref{recovery} as the {\it recovery sequence for $x$}.
Moreover, letting $\, X \,$ be a Banach space, we say that $\, g_n \to g \,$ in the sense of Mosco \cite{Attouch} if $\, g_n \to g\,$ in the sense of $\, \Gamma$-convergence with respect to both the strong and weak topology of~$\, X$.

Let us mention that the issue of the convergence of rate-independent evolution problems under approximation is indeed a crucial one. A general abstract theory of $\, \Gamma$-convergence for rate-independent systems is detailed in \cite{MiRoSt06}. 

Henceforth, we shall refer to the current choice \eqref{sceltaF} and explicitly ask the function $\, f \,$ to be convex and non-decreasing. This entails in particular that $\, F_\rho \rightarrow F \,$ pointwise and non-decreasing. The smoothness of $\, F_\rho \,$ and the latter convergence entail by means of \cite[Thm. 2.40, p. 198]{Attouch} that $\, F_\rho \rightarrow F \,$ in the sense of $\, \Gamma$-convergence in $\, \Rzd$. As a consequence and by using \cite[Thm. 2.15, p. 138]{Attouch}, we have that
\begin{equation}\label{gammaW1}
W_\rho \rightarrow W_0 \quad\text{in the sense of $\,\Gamma$-convergence in} \ \ \Rzn\times \Rzd.
\end{equation}
 As for the three-dimensional situation, let us start by observing that $\, \F_\rho \rightarrow \F \,$ in the sense of $\, \Gamma$-convergence with respect to both the strong and the weak topology in $\, L^2(\Omega; \Rzd)\,$ (namely,   $\, \F_\rho\,$ converges to $\, \F_0 \,$ in the sense of Mosco \cite{Attouch}). This fact follows at once from \cite[Thm. 2.40, p. 198]{Attouch} and the convexity of $\, \F_\rho$. For all $\, \nu>0 \,$ fixed, we readily deduce in a quite similar way that  $\, \F_{\rho,\nu}\,$ converges to $\, \F_{0,\nu} \,$ in the sense of Mosco in $\, H^1(\Omega;\Rzd)$. Let us make precise the latter statement with the following.

 \begin{lemma}[$\boldsymbol \Gamma$-convergence of the inelastic energy]\label{gammaeffe} Let $\, \rho_k \rightarrow \rho\geq 0\,$ and $\, \nu_k \rightarrow \nu \geq 0\, $ be non-increasing. Then $\, \F_{\rho_k,\nu_k} \rightarrow \F_{\rho,\nu}\,$ in the sense of Mosco in $\, H^{j(\nu)}(\Omega;\Rzd)$.
\end{lemma}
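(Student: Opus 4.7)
The plan is to establish Mosco convergence by verifying separately the $\Gamma$-liminf inequality with respect to the weak topology of $H^{j(\nu)}(\Omega;\Rzd)$ and the existence of a recovery sequence strongly convergent in the same space. The key preliminary observation is monotonicity: a direct inspection of the explicit form \eqref{sceltaF} shows $\partial_\rho F_\rho(a)\leq 0$ for every $a\in\Rzd$, so the hypothesis $\rho_k\searrow\rho$ forces $F_{\rho_k}\nearrow F_\rho$ pointwise. Each $F_{\rho_k}$ is convex and continuous on $\Rzd$, and $\nu_k\searrow\nu$ by assumption.

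For the $\Gamma$-liminf, take $a_k\rightharpoonup a$ weakly in $H^{j(\nu)}$, assume without loss of generality that $\liminf_k \F_{\rho_k,\nu_k}(a_k)<\infty$, and split the functional into bulk and gradient contributions. For every fixed $m\in\Nz$ the map $b\mapsto \int_\Omega F_{\rho_m}(b)\,dx$ is convex and continuous on $L^2(\Omega;\Rzd)$, hence weakly lower semicontinuous; combining this with $F_{\rho_k}\geq F_{\rho_m}$ for $k\geq m$ yields
\begin{equation*}
\liminf_{k\to\infty}\int_\Omega F_{\rho_k}(a_k)\,dx \;\geq\; \liminf_{k\to\infty}\int_\Omega F_{\rho_m}(a_k)\,dx \;\geq\; \int_\Omega F_{\rho_m}(a)\,dx,
\end{equation*}
and monotone convergence as $m\to\infty$ delivers $\liminf_k \int F_{\rho_k}(a_k)\geq \int F_\rho(a)$. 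The gradient contribution is trivial when $\nu=0$; when $\nu>0$ one has $\nu_k\geq\nu>0$ and the standard weak lower semicontinuity of $\|\nabla\cdot\|_{L^2}^2$, combined with $\nu_k\to\nu$, gives $\liminf_k \frac{\nu_k}{2}\|\nabla a_k\|_{L^2}^2 \geq \frac{\nu}{2}\|\nabla a\|_{L^2}^2$.

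For the recovery sequence, given $a\in H^{j(\nu)}$ with $\F_{\rho,\nu}(a)<\infty$, I distinguish two cases. If $\nu>0$, the constant choice $a_k\equiv a$ suffices: $\int F_{\rho_k}(a)\nearrow \int F_\rho(a)$ by monotone convergence, while $\frac{\nu_k}{2}\|\nabla a\|_{L^2}^2\to\frac{\nu}{2}\|\nabla a\|_{L^2}^2$ follows from $a\in H^1$. If $\nu=0$ and $\nu_k=0$ eventually, again $a_k=a$ works. Otherwise, when $\nu=0$ with $\nu_k>0$, I would mollify a zero-extension $\tilde a$ of $a$ at scale $\delta$, producing $a^\delta:=(\tilde a*\eta_\delta)|_\Omega\in H^1(\Omega;\Rzd)$ with $a^\delta\to a$ in $L^2$, $\|\nabla a^\delta\|_{L^2}\leq C/\delta$, and (by Jensen's inequality applied pointwise) $|a^\delta|\leq c_3$ whenever $|a|\leq c_3$, so that the indicator constraint is preserved when $\rho=0$. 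Choosing a diagonal scale $\delta_k\to 0$ so slowly that $\nu_k/\delta_k^2\to 0$, say $\delta_k:=\nu_k^{1/4}$, and setting $a_k:=a^{\delta_k}$, the gradient term $\frac{\nu_k}{2}\|\nabla a_k\|_{L^2}^2$ vanishes in the limit, while dominated convergence identifies $\int F_{\rho_k}(a_k)\to \int F_\rho(a)$.

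The main difficulty is precisely this subcase $\nu=0<\nu_k$: the topologies of the functionals and of the target space disagree, and one has to balance the vanishing of $\nu_k$ against the $H^1$-blowup of the mollified approximants. A secondary technical wrinkle concerns the case $\rho>0$ with $a$ merely in $L^2$ and possibly unbounded, where a preliminary $L^\infty$-truncation of $a$ is needed to produce a dominant for $F_{\rho_k}(a_k)$ before convolving; this entails one more diagonal extraction. All remaining steps reduce to the pointwise-monotone-convex machinery already invoked before the statement via \cite[Thm.~2.40]{Attouch}.
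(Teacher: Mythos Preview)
Your argument is correct. The $\Gamma$-liminf part is handled essentially as in the paper (which simply refers to the Mosco convergence $\F_{\rho_k}\to\F_\rho$ already established before the lemma), and your monotone--convex bootstrap through the fixed level $m$ is a legitimate and slightly more explicit way of writing it.

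The genuine difference lies in the construction of the recovery sequence when $\nu=0<\nu_k$. The paper does \emph{not} mollify: it takes $z_k$ to be the solution of the singular perturbation problem $z_k+\nu_k Jz_k=z$ in $(H^1(\Omega;\Rzd))'$, with $J$ the Riesz isomorphism, and then quotes the classical estimates $z_k\to z$ in $L^2$ and $\frac{\nu_k}{2}\|\nabla z_k\|_{L^2}^2\to 0$ from singular perturbation theory, while the pointwise constraint $|z_k|\leq c_3$ is inherited from $|z|\leq c_3$ via the maximum principle. This has the advantage that no diagonal extraction in $k$ is needed: the parameter $\nu_k$ itself fixes the regularization scale, and the two asymptotics are obtained simultaneously. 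Your mollification route is more elementary (only convolution and Jensen, no elliptic theory), at the price of having to tune $\delta_k$ against $\nu_k$ by hand.

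One small remark: the ``secondary technical wrinkle'' you raise for $\rho>0$ is not actually needed. Since $\rho_k\geq\rho>0$ gives $F_{\rho_k}\leq F_\rho$ and $F_\rho$ has at most quadratic growth (bounded Hessian by \eqref{F_rho}), the functional $b\mapsto\int_\Omega F_\rho(b)\,dx$ is strongly continuous on $L^2$, so $\limsup_k\int F_{\rho_k}(a_k)\leq\lim_k\int F_\rho(a_k)=\int F_\rho(a)$ directly, without any preliminary truncation or additional diagonalization.
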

   
\begin{proof} The above discussion may be readily extended in order to cover the case $\, \nu_k \to \nu >0$. Let us turn to the situation $\, \nu =0 \,$ and $\, \nu_k >0 \,$ instead. Of course, the $\, \Gamma-$liminf inequality \eqref{liminf} easily follows from the $\, \Gamma$-convergence $\, \F_{\rho_k} \rightarrow \F_\rho \,$ and lower semicontinuity considerations. As for the recovery sequence, letting $\, z \in L^2(\Omega;\Rzd)\,$ be fixed, we shall define $\, z_k \,$ as the unique solution to the singular perturbation problem
$$z_k +\nu_kJz_k = z \quad \text{in} \ \ (H^1(\Omega;\Rzd))',$$
where $\, J :H^1(\Omega;\Rzd) \rightarrow  (H^1(\Omega;\Rzd))'\,$ is the Riesz map. We have that (see, e.g., {\sc Lions} \cite{Lions73})
\begin{gather}
  z_k \rightarrow z \quad \text{strongly in} \ \   L^2(\Omega;\Rzd) \quad \text{and} \quad
\frac{\nu_k}{2}\int_\Omega |\nabla z_k|^2 \rightarrow 0.\nonumber
\end{gather}
Moreover, whenever $\, |z|\leq c_3 \,$ almost everywhere in $\, \Omega$, the same bound holds for all $\, z_k\,$ by the maximum principle. Hence, we readily check that
$$\F_{\rho_k,\nu_k} (z_k) \rightarrow \F_{\rho,0}(z)$$
and the assertion follows. 
   \end{proof}

We shall now turn our attention to the convergence of stored energies and state the following.

\begin{lemma}[$\boldsymbol \Gamma$-convergence of the stored energy]\label{gammavu} Let $\, \rho_k \rightarrow \rho\geq 0\,$ and $\, \nu_k \rightarrow \nu \geq 0\, $ be non-increasing. Then $\, \W_{\rho_k,\nu_k} \rightarrow \W_{\rho,\nu}\,$ in the sense of Mosco in $\, \Y$.
\end{lemma}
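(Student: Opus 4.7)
The plan is to decompose $\W_{\rho,\nu}(u,z) = \CC(\epsi(u) - z) + \F_{\rho,\nu}(z)$ and to treat the two summands separately. The key observation is that the elastic contribution $(u,z) \mapsto \CC(\epsi(u) - z)$ does not depend on either $\rho$ or $\nu$, and is both strongly continuous and weakly lower semicontinuous on $H^1(\Omega;\Rz^3) \times L^2(\Omega;\Rzd)$; hence the Mosco convergence for $\W_{\rho_k,\nu_k}$ will follow by simply adding the convergence of the elastic part to the Mosco convergence of $\F_{\rho_k,\nu_k}$ already established in Lemma \ref{gammaeffe}, in the spirit of the standard stability of $\Gamma$-convergence under continuous perturbations (see \cite{DalMaso93}).

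For the $\Gamma$-liminf inequality I would take an arbitrary sequence $(u_k,z_k) \rightharpoonup (u,z)$ weakly in $\Y$ (the case of strong convergence being contained in this one), observe that $\epsi(u_k) - z_k \rightharpoonup \epsi(u) - z$ weakly in $L^2(\Omega;\Rzn)$ thanks to the weak continuity of $\epsi$ and the continuous embedding $H^{j(\nu)}(\Omega;\Rzd) \hookrightarrow L^2(\Omega;\Rzd)$, and then combine the weak lower semicontinuity of the convex continuous functional $\CC$ with the liminf part of Lemma \ref{gammaeffe} applied to $z_k$. Summing the two bounds yields the required $\Gamma$-liminf both in the strong and in the weak topology of $\Y$.

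For the recovery sequence, given $(u,z) \in \Y$, I would let $u_k := u$ and take $z_k$ to be the recovery sequence supplied by Lemma \ref{gammaeffe} at the point $z$. Then $(u_k,z_k) \to (u,z)$ strongly in $\Y$ and $\epsi(u_k) - z_k \to \epsi(u) - z$ strongly in $L^2(\Omega;\Rzn)$, so that the strong continuity of $\CC$ forces $\CC(\epsi(u_k) - z_k) \to \CC(\epsi(u) - z)$; combining with the recovery property $\limsup \F_{\rho_k,\nu_k}(z_k) \leq \F_{\rho,\nu}(z)$ of Lemma \ref{gammaeffe} closes the argument.

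The only point where some care is needed is the mixed regime $\nu = 0$, $\nu_k > 0$, in which the ambient space $H^1(\Omega;\Rz^3) \times L^2(\Omega;\Rzd)$ strictly contains the effective domain of $\W_{\rho_k,\nu_k}$. This difficulty is however entirely absorbed into Lemma \ref{gammaeffe}, whose recovery sequence is built by singular perturbation and already satisfies both $(\nu_k/2)\int_\Omega |\nabla z_k|^2 \to 0$ and, when relevant, the pointwise bound $|z_k| \leq c_3$; apart from this bookkeeping, I do not expect any real obstacle, the elastic summand playing only a passive role in the passage to the limit.
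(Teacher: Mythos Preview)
Your proposal is correct and is essentially what the paper has in mind: the paper does not give a detailed proof either, merely indicating that one should ``argue along the lines of the proof of Lemma~\ref{gammaeffe}''. Your decomposition $\W_{\rho,\nu}=\CC(\epsi(\cdot)-\cdot)+\F_{\rho,\nu}$, together with the observation that the elastic term is a strongly continuous and weakly lower semicontinuous perturbation independent of $(\rho,\nu)$, makes this reduction to Lemma~\ref{gammaeffe} explicit and is exactly the right way to read the paper's hint.
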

We will not provide the reader with a detailed proof. Of course, the argument can be easily reproduced by arguing along the lines of the proof of Lemma \ref{gammaeffe}.

\subsection{Constitutive relation} 

Let us denote by $\, (\epsi,z)_{\rho,\tau}\,$ the incremental solution to the constitutive relation on the partition $\, P:=\{0=t_0<t_1 <\dots<t_{N-1}<t_{N}=T\}  $ with diameter $\, \tau$, namely the right-continuous piecewise constant interpolant on the time partition of the solutions $\, \{(\epsi_{\rho}^i,z_{\rho}^i)\} \,$ to 
$$(\epsi_\rho^i,z_\rho^i)\in\argmin_{(\epsi,z) \in \Rzn \times \Rzd}\big(W_\rho(\epsi,z) - \sigma(t_i):\epsi + D(z - z^{i-1}_\rho) \big)\quad i=1,\dots,N,$$
where $\, \sigma \in W^{1,1}(0,T;\Rzn)\,$ and $\,  (\epsi_{\rho}^0,z_{\rho}^0)=(\epsi_0,z_0)\,$ are given. Moreover, for all $\, \rho \geq 0$, we will denote by $\, (\epsi,z)_{\rho,0}\,$ a solution for the time-continuous constitutive relation. Of course we would be in the position of considering approximating data $\, \sigma_{\rho,\tau}\,$ and $\, (\epsi_0,z_0)_{\rho,\tau}\,$ as well. We limit ourselves to the above situation just for the sake of simplicity. The main result of this subsection is the following.
\begin{theorem}[Convergence for the constitutive relation]\label{CCR}
Let $\, \rho_k \to \rho\geq 0\,$ and $\, \tau_k \to \tau \geq 0\,$ either being constant or converging to $\, 0$. Then, possibly up to the extraction of a subsequence in the case $\, (\rho,\tau)=(0,0)$, we have that
$$(\epsi,z)_{(\rho,\tau)_k}\to (\epsi,z)_{\rho,\tau} \qquad \text{pointwise in} \ \ [0,T].$$
\end{theorem}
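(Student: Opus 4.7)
The plan is to handle the various regimes uniformly, combining uniform a priori bounds, Helly's selection principle, and the $\Gamma$-convergence $W_{\rho_k} \to W_\rho$ in $\Rzn \times \Rzd$ recorded in \eqref{gammaW1}. Throughout, abbreviate $(\epsi^k, z^k) := (\epsi, z)_{(\rho, \tau)_k}$; when $\tau_k > 0$ this is a right-continuous piecewise constant interpolant, and when $\tau_k \to 0$ it is either such an interpolant (with decreasing step) or, in the case $\tau_k = 0$ constant, the energetic solution itself.

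First I would establish uniform a priori bounds. Reproducing \eqref{poi} with $W_{\rho_k}$ in place of $W_\rho$ and applying discrete Gronwall, one obtains that $\sup_{t \in [0,T]} W_{\rho_k}(\epsi^k(t), z^k(t))$ and $\Diss_D(z^k, [0,T])$ are bounded uniformly in $k$; the bound depends on $W_{\rho_k}(\epsi_0, z_0)$, which is itself uniformly controlled thanks to the monotone pointwise convergence $F_{\rho_k} \to F$ and to $(\epsi_0, z_0) \in S(0)$ (hence $|z_0| \leq c_3$), and on $\|\sigma\|_{W^{1,1}(0,T;\Rzn)}$. By Helly's selection principle I then extract a (not relabeled) subsequence and find $z:[0,T] \to \Rzd$ and a non-decreasing $\phi:[0,T] \to [0,+\infty)$ with $z^k(t) \to z(t)$ for every $t$, $\Diss_D(z^k, [0,t]) \to \phi(t)$, and $\Diss_D(z, [s,t]) \leq \phi(t) - \phi(s)$. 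Since the first order condition $\partial_\epsi W_{\rho_k}(\epsi, z) = \sigma$ reduces to $\epsi = z + \C^{-1}\sigma$ independently of $\rho_k$, the discrete minimality in \eqref{return} yields $\epsi^k(t) = z^k(t) + \C^{-1}\sigma(\tau^k(t))$, so that $\epsi^k(t) \to \epsi(t) := z(t) + \C^{-1}\sigma(t)$ pointwise.

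The crucial step is passing stability and energy balance to the limit. For stability, fix $t$ and $(\overline \epsi, \overline z) \in \Rzn \times \Rzd$, and write the discrete stability of $(\epsi^k(t), z^k(t))$ at time $\tau^k(t)$ tested against $(\overline \epsi, \overline z)$ itself:
$$W_{\rho_k}(\epsi^k(t), z^k(t)) - \sigma(\tau^k(t)):\epsi^k(t) \leq W_{\rho_k}(\overline \epsi, \overline z) - \sigma(\tau^k(t)):\overline \epsi + D(\overline z - z^k(t)).$$
Taking the $\liminf$ on the left by the $\Gamma$-liminf inequality \eqref{liminf} (with the strongly converging sequence $(\epsi^k(t), z^k(t))$) and the $\limsup$ on the right using the pointwise convergence $W_{\rho_k}(\overline \epsi, \overline z) \to W_\rho(\overline \epsi, \overline z)$ (the constant sequence serves as recovery thanks to the monotone pointwise character of $F_{\rho_k} \to F$; the case $F(\overline z) = +\infty$ is trivial) together with continuity of $\sigma$ and of $D$, I obtain $(\epsi(t), z(t)) \in S(t)$. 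For the energy balance, passing to the $\liminf$ in the $W_{\rho_k}$-version of \eqref{preupper} yields the upper inequality, via Lebesgue dominated convergence on $\int_0^{\tau^k(t)} \dot\sigma : \epsi^k \, ds$ (uniformly bounded $\epsi^k$); the matching lower inequality then follows verbatim as in the proof of Theorem~\ref{const} from stability, the absolute continuity of $\sigma$, and the uniform convexity with constant $\alpha$ independent of $k$. Hence $(\epsi, z)$ is an energetic solution of \eqref{s}--\eqref{e} for the parameter $\rho$ in the sense intended by $(\epsi, z)_{\rho, \tau}$.

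Uniqueness of the limit whenever $\rho > 0$ follows from Theorem~\ref{con_dep_lemma}, upgrading subsequential to full convergence; only in the joint case $(\rho, \tau) = (0,0)$ does one need to keep the subsequence, matching the statement. The main difficulty I anticipate is the stability step, which requires turning a $\Gamma$-convergence statement about functionals into a pointwise inequality where both the functionals $W_{\rho_k}$ and the arguments $(\epsi^k(t), z^k(t))$ vary with $k$; the monotone pointwise nature of $F_{\rho_k} \to F$ keeps the recovery side trivial, but one must also verify the continuity $D(\overline z - z^k(t)) \to D(\overline z - z(t))$, which is immediate from strong pointwise convergence of $z^k(t)$ in $\Rzd$ and the continuity of the finite convex function $D$.
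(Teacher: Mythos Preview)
Your unified argument via Helly and the energetic formulation is correct and matches the paper's approach for the regimes where $\tau_k \to 0$ (these are the paper's limits of type $a$, $c$, $d$). In particular, your treatment of the joint limit $(\rho_k,\tau_k)\to(0,0)$ is essentially the paper's proof of limit $c$.

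There is however a genuine gap in the case $\tau_k = \tau > 0$ constant with $\rho_k \to 0$ (the paper's limit $b$). Here the target $(\epsi,z)_{0,\tau}$ is the \emph{incremental} solution on the fixed partition, not an energetic solution of the time-continuous problem \eqref{s}--\eqref{e}. Your argument, as written, passes to the limit in the discrete stability at $\tau^k(t)$ and in the discrete upper energy estimate; since the partition is fixed, $\tau^k(t)$ does not converge to $t$ but stays at the partition node, so you would only obtain stability at partition times, and the limit you construct is piecewise constant, hence cannot satisfy \eqref{s}--\eqref{e} unless $\sigma$ is trivial. The correct identification of the limit here is step-by-step: at each node $t_i$ one uses the $\Gamma$-convergence $W_{\rho_k}\to W_0$, equi-coercivity, and continuity of $D$ to get weak convergence of minimizers, and then the continuous-dependence Lemma~\ref{cont_dep_const} (which holds for $\rho=0$) to propagate convergence of the previous step's $z^{i-1}$ into convergence of the current minimizer.

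A related minor gap: your upgrade from subsequential to full convergence invokes Theorem~\ref{con_dep_lemma}, which requires $\rho>0$. For the case $(\rho,\tau)=(0,\tau)$ with $\tau>0$ the statement also promises full convergence; this follows instead from the uniqueness of each incremental minimizer (uniform convexity of $W_0$), not from Theorem~\ref{con_dep_lemma}.
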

Indeed much more is true since the convergence of the component $\, z_{(\rho,\tau)_k} \,$ is uniform and we have convergences also of energies and dissipations.  Moreover, one could consider the limits $\, \rho_k \to \rho >0 \,$ and/or $\, \tau_k \to \tau >0 \,$ as well (which we however believe to be less interesting). We limit ourselves to the above statement for the sake of clarity.

The situation of Theorem \ref{CCR} is described in Figure 1 below where every parameter choice $\, (\rho,\tau)\,$ in the $\, \rho \times \tau \,$ square gives rise to a solution either of the incremental problem (for $\, \tau >0$) or the time-continuous problem ($\tau =0$). Of course this solution is unique if  $\, (\rho,\tau)\not =(0,0)$. Theorem \ref{CCR} entails that all the depicted limits (arrows) can be performed.
\begin{figure}[htbp]
  \centering
\psfrag{r}{$\rho$}
\psfrag{t}{$\tau$}
\psfrag{0}{$(0,0)$}
\psfrag{a}{$a$}
\psfrag{b}{$b$}   
\psfrag{c}{$c$}
\psfrag{d}{$d$}
 \includegraphics[width=.3 \textwidth]{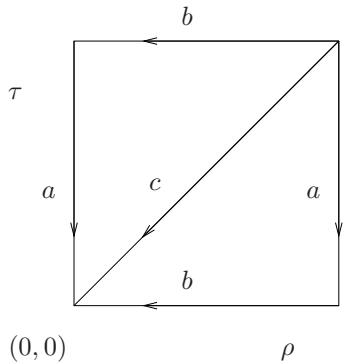}
  \caption{Convergences for the constitutive relation}
  \label{fig:1}
\end{figure}

\begin{proof} By referring to Figure \ref{fig:1}, we shall proceed by discussing limits of type {\it a, b, c,} and {\it d}.

{\it Limits of type a,} namely $\, (\rho,\tau)_k \to (\rho,0)$. These limits follow directly from Theorem \ref{const}.

{\it Limits of type b,} namely $\, (\rho,\tau)_k \to (0,\tau)\,$ with $\, \tau >0$. Since the time partition is fixed, the convergence of the whole sequence $\, (\epsi,z)_{(\rho_k,\tau)}\,$ to the corresponding incremental solution $\, (\epsi,z)_{(0,\tau)}\,$ is ensured by the $\, \Gamma$-convergence of the corresponding energy functionals, their equi-coercivity with respect to $\, \rho$, the continuity of $\, R$, and the continuous dependence of the incremental problem for $\, \rho \geq 0$. 

{\it The limit c,} namely $\, (\rho,\tau)_k \to (0,0)$. Let us now turn to the joint limit. 
 Again, the usual energy and dissipation bounds may be obtained and, by suitably choosing not relabeled subsequences, we find $\,(\epsi,z):[0,T] \rightarrow\Rzn\times \Rzd \,$ such that $\, z_{(\rho,\tau)_k}(t) \rightarrow z(t) \,$ and $\, \epsi_{(\rho,\tau)_k}(t) \rightarrow \epsi(t) \,$ for all $\, t \in [0,T]$. As for to prove the stability of $\, (\epsi(t),z(t)) \,$ we simply need to specialize the closure argument in Theorem \ref{const} by considering the parameter dependence on $\, \rho$. Here, the $\, \Gamma-$convergence \eqref{gammaW1} is again crucial. In particular, let us redefine (see \eqref{stable}), for all $\, \rho \geq 0$,
\begin{gather}
  S_\rho(t):=\Big\{(\epsi,z) \in \Rzn \times \Rzd  \ \ \text{such that}, \ \ \forall (\overline \epsi, \overline z) \in  \Rzn \times \Rzd,\nonumber\\
W_\rho(\epsi,z) - \sigma(t) : \epsi \leq W_\rho(\overline \epsi, \overline z) - \sigma(t) : \overline \epsi+ D(\overline z - z)  
\Big\},  \label{stable99}
\end{gather}
and $\, \S_\rho:= \cup_{t \in [0,T]}(t,S_\rho(t))$. Owing to  the $\, \Gamma-$convergence \eqref{gammaW1} and the continuity of $\, \sigma \,$  we readily check that, for all $\, (t_\rho,\epsi_\rho,z_\rho) \in S_\rho \,$ such that $\, (t_\rho,\epsi_\rho,z_\rho)\,$ converges to $\, (t_0,\epsi_0,z_0)\,$ as $\, \rho \rightarrow 0\,$ one has that $\, (t_0,\epsi_0,z_0) \in S_0$.  As for the upper energy estimate, we readily pass to the $\,\liminf\,$ in the discrete upper equality estimate \eqref{preupper} by means of the $\, \Gamma$-convergence \eqref{gammaW1} and the fact that  $\, W_\rho \rightarrow W_0\,$ pointwise. Finally, the full energy equality follows again from stability.

{\it The limit d,} namely $\, (\rho,0)_k \to (0,0)$. We shall not discuss this limit in detail since it follows easily along the lines of limit {\it c} above.
\end{proof}

\subsection{The minimum problem} 

We investigate for simplicity the situation of fixed data $\, \uD \in H^1(\Omega; \Rz^3)$, $\, \ell \in ( H^1(\Omega; \Rz^3))'$, and $\, \overline z \in L^2(\Omega;\Rzd)$. Of course, some more general situation of parameter-dependent data could be considered as well (see also the forthcoming Lemma \ref{gammaj}). Moreover, let us introduce for the purposes of this section the notation $\, \I_{\rho,\nu}:\Y \rightarrow (-\infty,+\infty] \,$ as
\begin{gather}
\I_{\rho,\nu}(u,z):= \W_{\rho,\nu}(u,z) - \lan \ell,u\ran +\D(z - \overline z)\ \ \ \forall (u,z) \in \Y, \nonumber
\end{gather}
for  all $\, \rho, \, \nu \geq 0$.
Problem \eqref{min} has a unique solution $\, (u,z)_{\rho,\nu} \in \Y(\uD)$ for all given parameters $\, \rho, \, \nu \geq 0$. Moreover, we readily check that $\, \W_{\rho,\nu}((u,z)_{\rho,\nu})\,$ turns out to be bounded independently of $\, \rho\,$ and $\,\nu$. Hence, $\, (u,z)_{\rho,\nu}\,$ is weakly precompact in $\, \Y$.

Moreover, we shall consider the space approximated situation described by the mesh-size $\, h>0$. For the sake of notational simplicity, we reduce ourselves to the oversimplified situation of data independent of $\, h\,$. In particular, we assume $\, \uD \in {\cal U}_h\,$ for $\, h \,$ small enough and define $\, \Y_h(\uD):= \Y_{h,0}+(\uD,0)$. As for the general case, the following discussion has to be restricted to the situation where  convergence \eqref{err} holds for the approximating data $\, \uD_h,\, \ell_h$, and $\, \overline z_h $. Consequently, we will make use of the notation
$$\I_{\rho,\nu,h}(u,z):= \I_{\rho,\nu}(u,z) \quad \text{for} \ \ (u,z) \in \Y_h \ \ \text{and} \ \ +\infty \ \ \text{otherwise in } \ \Y.$$

We shall start by providing the following convergence result.

\begin{lemma}[$\boldsymbol \Gamma$-convergence of $\, \boldsymbol \I_{\boldsymbol \rho\boldsymbol ,\boldsymbol \nu\boldsymbol ,\boldsymbol h}$]\label{gammai} 
Let $\, \rho_k \to \rho \geq 0$, $\, \nu_k \to \nu \geq 0$, and $\, h>0$. Then
  \begin{eqnarray}
    &&\I_{\rho_k,\nu_k} \to \I_{\rho,\nu} \quad \text{in the sense of Mosco in} \ \ \Y,\label{one}\\
  &&\I_{\rho_k,\nu_k,h} \to \I_{\rho,\nu,h} \quad \text{in the sense of Mosco in} \ \ \Y_h.  \label{two}
  \end{eqnarray}
Moreover, let $\, h_k \to 0$. Then 
\begin{equation}
   \I_{(\rho,\nu,h)_k} \to \I_{\rho,\nu} \quad \text{in the sense of Mosco in} \ \ \Y.\label{three}
  \end{equation}
\end{lemma}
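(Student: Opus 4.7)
The strategy in all three parts is to reduce to Lemma \ref{gammavu} by isolating the $(\rho,\nu)$-independent perturbations in $\I_{\rho,\nu}$: the load $u \mapsto -\lan \ell, u\ran$, which is weakly continuous on $\Y$, and the shifted dissipation $z \mapsto \D(z - \overline z)$, which is strongly continuous on $L^1(\Omega;\Rzd)$ (thanks to $D(a) \leq C_D|a|$ combined with the triangle inequality), hence strongly continuous on $\Y$, and convex, hence weakly lower semicontinuous on $\Y$. Since Mosco convergence is preserved under addition of such perturbations, \eqref{one} will follow directly from Lemma \ref{gammavu}. For \eqref{two}, the plan is to note that $\Y_h$ is closed and convex, hence weakly closed, so the $\Gamma$-liminf inequality reduces at once to \eqref{one}; for the strong recovery sequence, the constant choice $(u_k,z_k) := (u,z) \in \Y_h$ will work, since ${\cal Y}^0_h \equiv {\cal Y}^1_h$ ensures $z \in H^1(\Omega;\Rzd)$, and because $F_{\rho_k}(z) \nearrow F_0(z)$ pointwise and monotonically (by \eqref{sceltaF}) together with $\nu_k \to \nu$ gives $\I_{\rho_k,\nu_k,h}(u,z) \to \I_{\rho,\nu,h}(u,z)$.

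For the joint limit \eqref{three}, the $\Gamma$-liminf inequality will be proved term by term: weak lower semicontinuity of $\CC$, weak continuity of the load, weak lower semicontinuity of $\D(\cdot - \overline z)$, nonnegativity of the $\nu_k$-gradient contribution, and the $\Gamma$-liminf inequality for $\int F_{\rho_k}(\cdot)$ coming from the $\Gamma$-convergence $F_{\rho_k} \to F_\rho$ in $\Rzd$ (as recalled in the proof of Lemma \ref{gammaeffe}). The delicate part is the recovery sequence inside $\Y_{h_k}$. When $\nu > 0$, I would set $z_k := r_{h_k}^{\nu_k}(z)$ and $u_k := q_{h_k}(u)$: assumption \eqref{clem} preserves the constraint $|z_k|\leq c_3$, strong convergence $(u_k,z_k) \to (u,z)$ in $\Y$ follows from the consistency assumptions on $q_h, r_h^\nu$, and each term in $\I_{\rho_k,\nu_k}(u_k,z_k)$ then converges by dominated convergence for $\int F_{\rho_k}(z_k)$ and continuity of the remaining terms. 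When $\nu = 0$ (possibly with $\nu_k > 0$), I would first regularize $z \in L^2(\Omega;\Rzd)$ via the singular-perturbation recipe $z^{(k)} + \nu_k J z^{(k)} = z$ from the proof of Lemma \ref{gammaeffe}, which yields $z^{(k)} \to z$ strongly in $L^2$, $\nu_k \|\nabla z^{(k)}\|_{L^2}^2 \to 0$, and $|z^{(k)}| \leq c_3$ a.e.\ by the maximum principle. I would then set $z_k := r_{h_k}^{\nu_k}(z^{(k)})$, $u_k := q_{h_k}(u)$, and extract a diagonal subsequence: Cl\'ement-type $L^2$-stability of $r_h^\nu$ on gradients will keep $\nu_k \|\nabla z_k\|_{L^2}^2$ infinitesimal, while \eqref{clem} again secures $|z_k|\leq c_3$.

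The main obstacle will be exactly this recovery construction in \eqref{three} in the critical regime $\nu_k \searrow \nu = 0$: one must approximate a purely $L^2$ inelastic strain by elements of the discrete subspaces $\Y_{h_k}\subset {\cal Y}^1$ while simultaneously (i) keeping $\nu_k\|\nabla z_k\|_{L^2}^2$ infinitesimal, (ii) enforcing the hard pointwise constraint $|z_k|\leq c_3$ (otherwise the penalty $\varphi(|z_k|)/\rho_k$ in \eqref{sceltaF} would diverge as $\rho_k \searrow 0$), and (iii) securing strong $L^1$-convergence of $\D(z_k - \overline z)$. The tandem of singular perturbation, Cl\'ement interpolation, and diagonal extraction is tailored precisely to deliver all three properties at once.
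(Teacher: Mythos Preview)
Your treatment of \eqref{one} and \eqref{two} is exactly the paper's: reduce to Lemma~\ref{gammavu} plus continuity of the lower-order terms, and use the constant recovery sequence in $\Y_h$ via ${\cal Y}^0_h\equiv{\cal Y}^1_h$. For \eqref{three} the paper is considerably more laconic: it takes the \emph{single} recovery sequence $(q_{h_k}(u),\,r^{\nu_k}_{h_k}(z))$ in all regimes and simply appeals to ``the convergence and boundedness properties of the projectors'' to conclude $\W_{(\rho,\nu)_k}\big(q_{h_k}(u),r^{\nu_k}_{h_k}(z)\big)\to\W_{\rho,\nu}(u,z)$. Your case split is thus more cautious than the paper, and in fact more honest about the delicate regime $\nu_k\searrow 0$ with $z\notin H^1$, where control of $\nu_k\|\nabla r_{h_k}^{\nu_k}(z)\|_{L^2}^2$ is not automatic from the stated hypotheses; your singular-perturbation-then-project recipe supplies exactly the missing $H^1$ regularity so that the $H^1$-stability of $r_h$ (i.e.\ ``$r_h^0:{\cal Z}^1\to{\cal Z}^1$ maps bounded sets into bounded sets'') can be invoked. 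One correction: drop the ``diagonal subsequence'' step. The index $k$ is fixed by the given $(\rho_k,\nu_k,h_k)$, so no subsequence extraction is permitted in the recovery construction; instead argue directly that $z_k:=r_{h_k}^{\nu_k}(z^{(k)})\to z$ in $L^2$ via
\[
\|z_k-z\|_{L^2}\le \|r_{h_k}^{\nu_k}(z^{(k)}-z)\|_{L^2}+\|r_{h_k}^{\nu_k}(z)-z\|_{L^2}+\|z^{(k)}-z\|_{L^2},
\]
using the uniform $L^2$-boundedness of the Cl\'ement interpolant for the first term and the assumed pointwise consistency for the second.
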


\begin{proof} The convergence in \eqref{one} follows directly from Lemma \ref{gammavu} and the strong continuity of $\, \D \,$ in $\, L^2(\Omega,\Rzd)$.

Convergence \eqref{two} is also straightforward. Namely, the $\,\liminf\,$ inequality for weakly converging sequences is immediate and the construction of recovery sequences follows at once from pointwise convergence (recall that $\, {\cal Y}^0_h={\cal Y}^1_h\,$ hence no singular perturbation is needed here).

The full convergence situation of \eqref{three} deserves some comment. Given any $\, (u,z)\in \Y$, we define 
$$(u,z)_{(\rho,\nu,h)_k}:= (q_{h_k}(u),r^{\nu_k}_{h_k}(z)).$$
Owing to the convergence and boundedness properties of the projectors $\, q_{h_k}\,$ and $\,r^{\nu_k}_{h_k}\,$ (see Section \ref{intro}), we readily deduce that $\,(u,z)_{(\rho,\nu,h)_k}\to (u,z) \quad \text{strongly in} \ \ \Y\,$ and 
$$\W_{(\rho,\nu,h)_k}((u,z)_{(\rho,\nu,h)_k})\to \W_{\rho,\nu}(u,z). $$
The $\,\liminf\,$ inequality follows once again from lower semicontinuity.
\end{proof}

The main result of this subsection concerns the possibility of considering (possibly joint) limits in the parameters $\, \rho,\, \nu$, and $\, h \,$ and is graphically represented in Figure 2 below.
\begin{figure}[htbp]
  \centering
\psfrag{r}{$\rho$}
\psfrag{n}{$\nu$}
\psfrag{0}{$(0,0,0)$}
\psfrag{h}{$h$}
 \includegraphics[width= 1 \textwidth]{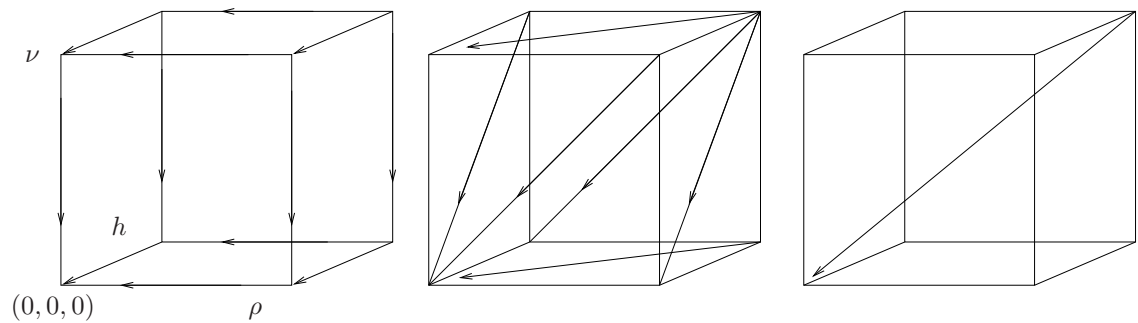}
  \caption{Convergences for the minimum problem}
  \label{fig:2}
\end{figure} 

\begin{theorem}[Convergence for the minimum problem]\label{CMP}
 Let $\, \rho_k \to \rho \geq 0, \, \nu_k \to \nu \geq 0$, and $\, h_k \to h \geq 0 \,$ either being constant or converging to $\, 0$. Then
$$ (u,z)_{(\rho,\nu,h)_k} \to (u,z)_{\rho,\nu,h} \quad \text{weakly in}  \ \ \Y \ \ (\Y_h \ \ \text{if} \ \ h>0).$$
\end{theorem}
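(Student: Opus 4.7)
The plan is to apply the classical result that Mosco (equivalently, $\Gamma$-convergence plus equi-coercivity) implies convergence of minimizers to minimizers. The $\Gamma$-convergence of the functionals $\I_{(\rho,\nu,h)_k}$ to $\I_{\rho,\nu,h}$ is already supplied by Lemma \ref{gammai} in the three relevant scenarios: $h>0$ fixed (with possibly $\rho_k,\nu_k$ varying), $h=0$ with $\rho_k,\nu_k$ varying, and the full case $h_k\to 0$. Thus the proof reduces to producing an a priori bound, extracting a weak cluster point, identifying it via the $\Gamma$-$\liminf$ and recovery-sequence properties, and finally invoking uniqueness of the limit minimizer.

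I would first establish equi-coercivity. Since $(u,z)_{(\rho,\nu,h)_k}$ minimizes $\I_{(\rho,\nu,h)_k}$ on the corresponding admissible set, and since $(\uD,0)$ belongs to each such set (for $k$ large enough in the discrete case), we get
\begin{equation*}
\I_{(\rho,\nu,h)_k}\big((u,z)_{(\rho,\nu,h)_k}\big) \leq \I_{(\rho,\nu,h)_k}(\uD,0).
\end{equation*}
The right-hand side stays bounded because $F_{\rho_k}(0)=0$ and $\|\uD\|_{H^1}$ is fixed. On the left-hand side, I exploit that $\A$ (which bounds $\W_{\rho_k,\nu_k}$ from below up to the affine terms) is uniformly $H^1\times L^2$-coercive with constant independent of $\rho$ and $\nu$, since the $\C(\epsi(u)-z)\!:\!(\epsi(u)-z)$ and $c_2|z|^2$ contributions suffice to control $\|u\|_{H^1}^2+\|z\|_{L^2}^2$ via Korn's inequality \eqref{korn} (once the Dirichlet trace of $u$ is handled by subtracting $\uD$). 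This yields a uniform $H^1(\Omega;\Rz^3)\times L^2(\Omega;\Rzd)$ bound on $(u,z)_{(\rho,\nu,h)_k}$, and additionally a $\nu_k$-weighted bound on $\nabla z_{(\rho,\nu,h)_k}$; in particular, when $\nu>0$ is constant along the sequence we obtain a bound in $H^1(\Omega;\Rzd)$, compatible with the target topology of the statement.

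Having extracted (up to a subsequence) a weak limit $(\bar u,\bar z)$ in the appropriate space $\Y^{j(\nu)}$ (or in $\Y_h$ if $h>0$, where weak and strong coincide as $\Y_h$ is finite-dimensional), I use the two Mosco conditions. The $\Gamma$-$\liminf$ inequality of Lemma \ref{gammai} gives
\begin{equation*}
\I_{\rho,\nu,h}(\bar u,\bar z)\leq \liminf_{k\to\infty} \I_{(\rho,\nu,h)_k}\big((u,z)_{(\rho,\nu,h)_k}\big).
\end{equation*}
On the other hand, for any admissible competitor $(v,w)$ for the limit problem, the recovery sequence $(v,w)_k$ produced by Lemma \ref{gammai} is strongly convergent and lies in the admissible set for each $k$ (in the case $h_k\to 0$ the construction $(v,w)_k=(q_{h_k}(v),r^{\nu_k}_{h_k}(w))$ preserves the Dirichlet datum for $k$ large, and also preserves the pointwise constraint $|w|\le c_3$ via \eqref{clem}). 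By minimality of $(u,z)_{(\rho,\nu,h)_k}$,
\begin{equation*}
\I_{(\rho,\nu,h)_k}\big((u,z)_{(\rho,\nu,h)_k}\big)\leq \I_{(\rho,\nu,h)_k}\big((v,w)_k\big)\to \I_{\rho,\nu,h}(v,w).
\end{equation*}
Combining the two displays yields $\I_{\rho,\nu,h}(\bar u,\bar z)\leq \I_{\rho,\nu,h}(v,w)$ for every competitor $(v,w)$, so $(\bar u,\bar z)$ is a minimizer; by uniform convexity of $\W_{\rho,\nu}$ the minimizer is unique and equals $(u,z)_{\rho,\nu,h}$, hence the whole original sequence converges.

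The main subtlety I anticipate is the case where $\nu_k\to 0$ (so $j(\nu_k)=1$ while $j(\nu)=0$): the functionals live on different Sobolev scales, and the natural target topology becomes merely weak in $H^1\times L^2$, which is why the statement is phrased as weak convergence in $\Y$. Verifying that this topology is compatible with both the coercivity input and the Mosco statement of Lemma \ref{gammai}—in particular that the recovery sequences constructed by the singular perturbation argument in the proof of Lemma \ref{gammaeffe} still produce admissible competitors respecting the Dirichlet boundary trace and the ball constraint—is the delicate point; the closedness of the admissibility constraint under weak $H^1$ convergence of the displacement component (weak continuity of trace on $\GammaD$) closes the argument.
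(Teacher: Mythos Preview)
Your proposal is correct and follows exactly the route the paper indicates: the paper does not report a proof but states that the result ``follows at once from Lemma \ref{gammai} and the equi-coercivity and uniform convexity of the functionals,'' which is precisely the scheme you carry out in detail. Your additional care about the mixed case $\nu_k\to 0$ and about the admissibility of recovery sequences (Dirichlet trace, ball constraint via \eqref{clem}) goes beyond what the paper spells out but is fully in its spirit.
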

 
This result, whose proof is not reported, follows at once from Lemma \ref{gammai} and the equi-coercivity and uniform convexity of the functionals. The limits $\, (\rho,\nu,h) \to (\rho,\nu,0)\,$  where already discussed in detail in Section \ref{incremental}.

\subsection{The incremental problem}\label{IIP} 

We shall extend the latter asymptotics for the minimum problem to the situation of the incremental problem on the fixed partition $\, P:=\{0=t_0<t_1<\dots<t_{N-1}<t_N=T\}$. To this aim let the data $\, \{\uDi\}_{i=0}^N$, $\, \{ \ell^i\}_{i=0}^N\,$ and the initial datum $\, (u^0,z^0)\,$ be suitably given independently of $\, \rho\,$ and $\, \nu \,$ (for simplicity). Then, for all $\, \rho, \, \nu \geq0\,$ we are entitled to solve the incremental problem and find a solution vector $\, \{(u^i_{\rho,\nu},z^i_{\rho,\nu})\}_{i=0}^N$. Now, arguing as above, we easily obtain that $\,\W_{\rho,\nu}(u^i_{\rho,\nu},z^i_{\rho,\nu})\,$ is bounded independently of $\, \rho, \, \nu$, and $\, i$.
For all given $\, \rho, \, \nu \geq 0 $, $\, i =1,\dots,N$, and $\, \overline z \in L^2(\Omega;\Rzd),$ we introduce the functionals
 $\, \J^i_{\rho,\nu}(\cdot,\cdot,\overline z):\Y \rightarrow (-\infty,+\infty] \,$ as
\begin{gather}
\J^i_{\rho,\nu}(u,z,\overline z):= \W_{\rho,\nu}(u,z) - \lan \ell^i,u\ran +\D(z - \overline z)\quad \forall (u,z) \in \Y \nonumber
\end{gather}
Moreover, possibly taking into account the space-approximated situation, one would need to introduce space approximated data  $\, \{\uDi_h\}_{i=0}^N$, $\, \{ \ell^i_h\}_{i=0}^N\,$ and the initial datum $\, (u_h^0,z_h^0)$. Let us however restrict ourselves to the (over)simplified situation where the latter can be assumed to be independent of $\, h $. For all $\, \rho,\, \nu\geq 0$, $h >0$, $\, i =1,\dots,N$, and $\, \overline z \in L^1(\Omega;\Rzd)$, we shall make use of the functionals $\, \J^i_{\rho,\nu,h}(\cdot,\cdot,\overline z):\Y \rightarrow (-\infty,+\infty] \,$ defined as$$\J^i_{\rho,\nu,h}(u,z,\overline z):=\J^i_{\rho,\nu}(u,z,\overline z)\quad \text{if} \ \ (u,z)\in \Y_h \ \ \text{and} \ \ +\infty \ \ \text{otherwise}.$$

Let us start from the following $\, \Gamma$-convergence result.

\begin{lemma}[$\boldsymbol \Gamma$-convergence of $\, {\boldsymbol \J}^{\boldsymbol i}_{\boldsymbol \rho\boldsymbol ,\boldsymbol \nu\boldsymbol ,\boldsymbol h}$]\label{gammaj}
Let $\, \rho_k \to \rho \geq 0$, $\, \nu_k \to \nu \geq 0$, and $\, h>0$. Moreover, let $\, \overline z_k \to \overline z \,$ strongly in $\, L^1(\Omega;\Rzd)$. Then, for all $\, i =1, \dots,N$,
  \begin{eqnarray}
    &&\J^i_{\rho_k,\nu_k}(\cdot,\cdot,\overline z_k) \to \J^i_{\rho,\nu}(\cdot,\cdot,\overline z) \quad \text{in the sense of Mosco in} \ \ \Y, \label{one2}\\
  &&\J^i_{\rho_k,\nu_k,h}(\cdot,\cdot,\overline z_k) \to \J^i_{\rho,\nu,h}(\cdot,\cdot,\overline z) \quad \text{in the sense of Mosco in} \ \ \Y_h.  \label{two2}
  \end{eqnarray}
Moreover, let $\, h_k \to 0$. Then, for all $\, i =1, \dots,N$,
\begin{equation}
   \J^i_{(\rho,\nu,h)_k}(\cdot,\cdot,\overline z_k) \to \J^i_{\rho,\nu}(\cdot,\cdot,\overline z) \quad \text{in the sense of Mosco in} \ \ \Y.\label{three2}
  \end{equation}
\end{lemma}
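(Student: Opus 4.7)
The statement extends Lemma~\ref{gammai} by allowing the basepoint $\overline z$ in the dissipation to vary with $k$. Since the only piece of $\J^i_{\rho_k,\nu_k}(\cdot,\cdot,\overline z_k)$ not already treated there is precisely this perturbation inside $\D$, the plan is to show that the perturbation is uniformly Lipschitz controlled in $k$, and then to inherit the remaining Mosco-convergence ingredients from Lemmas~\ref{gammavu} and \ref{gammai}.

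\textbf{Key estimate.} The dissipation functional $\D$ is $L^1$-Lipschitz with constant $C_D$: from the triangle inequality \eqref{triangle} one obtains $\D(z - \overline z_1) - \D(z - \overline z_2) \leq \D(\overline z_2 - \overline z_1)$, and the growth bound $D(a) \leq C_D |a|$ then yields
$$|\D(z - \overline z_1) - \D(z - \overline z_2)| \leq C_D \|\overline z_1 - \overline z_2\|_{L^1(\Omega;\Rzd)}.$$
In particular, as $\overline z_k \to \overline z$ strongly in $L^1(\Omega;\Rzd)$, the quantity $|\D(z - \overline z_k) - \D(z - \overline z)|$ tends to zero uniformly in $z$.

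\textbf{Proof of \eqref{one2}.} To verify the $\Gamma$-liminf inequality for a sequence $(u_k,z_k) \rightharpoonup (u,z)$ weakly in $\Y$, I decompose
$$\J^i_{\rho_k,\nu_k}(u_k,z_k,\overline z_k) = \big[\W_{\rho_k,\nu_k}(u_k,z_k) - \lan \ell^i, u_k \ran\big] + \D(z_k - \overline z_k).$$
The bracketed term is handled as in Lemma~\ref{gammai} using the Mosco convergence of $\W_{\rho_k,\nu_k}$ granted by Lemma~\ref{gammavu} and the weak continuity of the load. For the dissipation, the key estimate reduces matters to $\liminf_k \D(z_k - \overline z) \geq \D(z - \overline z)$, which holds by weak lower semicontinuity of the convex, strongly continuous functional $\D$ on $L^2(\Omega;\Rzd)$ (for $\nu > 0$ the compact embedding $H^1 \hookrightarrow L^2$ yields strong $L^2$-convergence of $z_k$, so the bound becomes an equality). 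For the recovery sequence, I pick the one supplied by Lemma~\ref{gammavu} for $\W_{\rho_k,\nu_k}$: it converges strongly in $\Y$ and realizes $\W_{\rho_k,\nu_k}(u_k,z_k) \to \W_{\rho,\nu}(u,z)$. Strong convergence in $\Y$ entails strong convergence of $z_k$ to $z$ in $L^2(\Omega;\Rzd)$, hence in $L^1$, so combining with $\overline z_k \to \overline z$ in $L^1$ and the Lipschitz estimate yields $\D(z_k - \overline z_k) \to \D(z - \overline z)$ and the $\limsup$ inequality.

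\textbf{Proof of \eqref{two2} and \eqref{three2}.} For fixed $h>0$ the argument for \eqref{two2} is identical, confined to the finite-dimensional subspace $\Y_h$; no singular-perturbation recovery is required since ${\cal Y}^0_h \equiv {\cal Y}^1_h$ by assumption. For the joint limit \eqref{three2} I combine the interpolation construction $(u,z)_{(\rho,\nu,h)_k} := (q_{h_k}(u), r^{\nu_k}_{h_k}(z))$ employed in Lemma~\ref{gammai}, which already provides strong $\Y$-convergence of the recovery sequence together with convergence of the stored energies, with the $L^1$-Lipschitz estimate applied to the dissipation term. The main (and only real) obstacle is ensuring that the $L^1$-perturbation of the basepoint does not spoil the weak-topology $\liminf$ inequality; the uniform Lipschitz estimate on $\D$ is the decisive ingredient that resolves it, after which all other steps are inherited from the earlier lemmas.
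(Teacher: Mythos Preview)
Your proposal is correct and follows exactly the approach the paper indicates: the paper omits the proof, merely remarking that it ``may be easily adapted from that of Lemma~\ref{gammai} by exploiting the strong continuity of $\D$ in $L^1(\Omega;\Rzd)$, its lower semicontinuity in $L^2(\Omega;\Rzd)$, and the triangle inequality~\eqref{triangle}.'' These are precisely the three ingredients you isolate---the triangle inequality plus the growth bound to get the $L^1$-Lipschitz estimate on $\D$, weak lower semicontinuity of $\D$ in $L^2$ for the $\liminf$ inequality, and the recovery sequences inherited from Lemmas~\ref{gammavu} and~\ref{gammai}---so there is nothing to add.
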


We are not reporting here the proof of the latter lemma for the sake of brevity. Indeed, the argument may be easily adapted from that of Lemma \ref{gammai} by exploiting the strong continuity of $\, \D \,$ in $\, L^1(\Omega;\Rzd)$, its lower semicontinuity in $\, L^2(\Omega;\Rzd)$, and the triangle inequality \eqref{triangle}.

By using Lemma \ref{gammaj} and denoting by $\, (u,z)_{\rho,\nu}\,$ and $\,  (u,z)_{\rho,\nu,h}\,$ the incremental solutions related to the parameter choice $\, (\rho,\nu)\,$ and, possibly, the space approximation, the main result of this subsection reads as follows.

\begin{theorem}[Convergence for the incremental problem for $\, \boldsymbol \nu \boldsymbol >\boldsymbol 0$]\label{CIP} Let $\, \nu >0\,$ be fixed and $\, \rho_k \to \rho$, and $\, h_k \to h \geq 0 \,$ either being constant of converging to $\,0$. Then, for all $\, t \in [0,T]$,
$$(u(t),z(t))_{\rho_k,\nu,h_k} \to (u(t),z(t))_{\rho,\nu,h} \quad \text{strongly in} \ \ \Y.$$
\end{theorem}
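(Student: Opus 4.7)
The plan is to proceed by finite induction on the time-step index $i = 0, 1, \dots, N$ of the fixed partition $P$. Since the data $\{\uDi\}$, $\{\ell^i\}$, and the initial datum $(u^0,z^0)$ are assumed independent of $(\rho,\nu,h)$, the base case $i=0$ is automatic: $(u^0_{\rho_k,\nu,h_k},z^0_{\rho_k,\nu,h_k})=(u^0,z^0)$ for every $k$. At the inductive step $i\geq 1$, I assume $(u^{i-1}_{\rho_k,\nu,h_k},z^{i-1}_{\rho_k,\nu,h_k})\to (u^{i-1}_{\rho,\nu,h},z^{i-1}_{\rho,\nu,h})$ strongly in $\Y$, so in particular $z^{i-1}_{\rho_k,\nu,h_k}\to z^{i-1}_{\rho,\nu,h}$ strongly in $L^1(\Omega;\Rzd)$. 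Writing briefly $(u_k,z_k):=(u^i_{\rho_k,\nu,h_k},z^i_{\rho_k,\nu,h_k})$ and $(u^*,z^*):=(u^i_{\rho,\nu,h},z^i_{\rho,\nu,h})$, each $(u_k,z_k)$ is the unique minimizer of $\J^i_{\rho_k,\nu,h_k}(\cdot,\cdot,z^{i-1}_{\rho_k,\nu,h_k})$ on $\Y(\uDi)$ (respectively on $\Y_{h_k}(\uDi)$ when $h_k>0$).

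Lemma \ref{gammaj} then yields Mosco convergence $\J^i_{\rho_k,\nu,h_k}(\cdot,\cdot,z^{i-1}_{\rho_k,\nu,h_k}) \to \J^i_{\rho,\nu,h}(\cdot,\cdot,z^{i-1}_{\rho,\nu,h})$ in $\Y$ when $h_k\to 0$, or in $\Y_h$ when $h>0$ is kept constant. The hypothesis $\nu>0$ plays an essential role here: the term $(\nu/2)\|\nabla z\|_{L^2}^2$ in $\W_{\rho_k,\nu}$ together with $F_{\rho_k}\geq 0$, $\D\geq 0$, and Korn's inequality \eqref{korn} gives a uniform $\Y$-bound on $(u_k,z_k)$, so the parametrized family is equi-coercive. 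By the classical $\Gamma$-convergence theorem for minimizers (\cite{DalMaso93}), every weak cluster point of $(u_k,z_k)$ is a minimizer of the Mosco limit functional; strict convexity of $\W_{\rho,\nu}$ forces uniqueness of that minimizer, so the whole sequence $(u_k,z_k)$ converges weakly in $\Y$ to $(u^*,z^*)$, together with the convergence of minima $\J^i_{\rho_k,\nu,h_k}(u_k,z_k,z^{i-1}_{\rho_k,\nu,h_k}) \to \J^i_{\rho,\nu,h}(u^*,z^*,z^{i-1}_{\rho,\nu,h})$.

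The upgrade from weak to strong convergence comes from uniform $\alpha$-convexity of $\W_{\rho_k,\nu}$, whose modulus $\alpha$ is independent of $\rho_k$ since $\nabla^2 F_{\rho_k}\geq c_2\,1_4$ by \eqref{F_rho}. Picking a Mosco recovery sequence $(\widetilde u_k,\widetilde z_k)\to (u^*,z^*)$ strongly in $\Y$ (or $\Y_{h_k}$) as constructed in the proof of Lemma \ref{gammai}, the minimality of $(u_k,z_k)$ gives
\begin{gather*}
\alpha\,\|(u_k,z_k)-(\widetilde u_k,\widetilde z_k)\|_\Y^2 \leq \J^i_{\rho_k,\nu,h_k}(\widetilde u_k,\widetilde z_k,z^{i-1}_{\rho_k,\nu,h_k}) - \J^i_{\rho_k,\nu,h_k}(u_k,z_k,z^{i-1}_{\rho_k,\nu,h_k}).
\end{gather*}
Both members on the right tend to $\J^i_{\rho,\nu,h}(u^*,z^*,z^{i-1}_{\rho,\nu,h})$ (the recovery term by the Mosco $\limsup$ property and the matching weak $\liminf$ bound, the minimum term by the convergence of infima just established), so the right-hand side vanishes and $(u_k,z_k)-(\widetilde u_k,\widetilde z_k)\to 0$ strongly in $\Y$; combined with strong convergence of the recovery sequence, this yields $(u_k,z_k)\to (u^*,z^*)$ strongly in $\Y$. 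The induction closes, and since $(u(t),z(t))_{\rho_k,\nu,h_k}$ is the right-continuous piecewise constant interpolant of the finite vector $\{(u^i_k,z^i_k)\}_{i=0}^N$ on a partition independent of $k$, pointwise strong convergence in $\Y$ for every $t\in[0,T]$ follows.

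The principal difficulty is the presence of the \emph{moving} reference state $z^{i-1}_{\rho_k,\nu,h_k}$ inside the dissipation term $\D(z - z^{i-1})$ of $\J^i$: the parametrized functionals depend on $k$ both through $\W_{\rho_k,\nu}$ and through that reference state, and the Mosco convergence must accommodate both simultaneously. This is exactly what Lemma \ref{gammaj} has been designed for, relying on the linear bound $D(a)\leq C_D|a|$, the triangle inequality \eqref{triangle}, and the inductive strong $L^1$ convergence of $z^{i-1}_{\rho_k,\nu,h_k}$. A lighter secondary issue is the matching of the Dirichlet constraint across the varying discrete spaces $\Y_{h_k}(\uDi)$: this is handled by Lemma \ref{struc}, shifting $u\mapsto u - \uDi$ into the fixed affine space $\Y_0$ before taking the limit.
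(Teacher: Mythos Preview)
Your proposal is correct and follows essentially the same approach as the paper: induction on the time-step index $i$, invoking Lemma~\ref{gammaj} for the Mosco convergence of $\J^i_{\rho_k,\nu,h_k}(\cdot,\cdot,z^{i-1}_k)$ once the strong $L^1$ convergence of $z^{i-1}_k$ is secured from the inductive hypothesis, with $\nu>0$ supplying the needed equi-coercivity in $\Y$. The paper's proof is a two-sentence sketch (``simply taking steps in $i$''); you have filled in precisely the details it leaves implicit, in particular the upgrade from weak to strong convergence via the uniform $\alpha$-convexity of $\W_{\rho_k,\nu}$ and a recovery sequence.
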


Of course, we would be in the position of considering the case $\, \nu_k \rightarrow \nu$, $\, \rho_k \to \rho>0$, and/or $\, h_k \to h>0\,$ as well. We however restrict to the above situation for the sake of clarity.

  Lemma \ref{gammaj} entails the convergence of the incremental solutions as soon as the strong convergence of $\, z_{\rho_k,\nu}\,$ or $\, z_{\rho_k,\nu,h_k}\,$ in $\, L^1(\Omega;\Rzd)\,$ is ensured. In order to obtain the latter from the boundedness of energy through compactness we are forced once again to restrict our attention to the case $\, \nu>0$. The proof of Theorem \ref{CIP} follows then by simply taking steps in $\, i$.

\subsection{The evolution problem} 

Owing to the latter discussion on the incremental problem (see Lemma \ref{gammaj}), we shall restrict ourselves to the situation $\, \nu >0 \,$ from the very beginning (note that existence is not known for $\, \nu=0$). For all $\, \rho,\, h \geq 0$, let us denote by  $\, (v,z)_\rho :[0,T]\rightarrow \Y_{0} \,$ and $\, (v,z)_{\rho,h} :[0,T]\rightarrow \Y_{0,h} \,$ the solutions to the corresponding energetic formulations for $\, h=0 \,$ and $\, h>0\,$ (here and in what follows we have assumed the data $\, L,\, q$, and the initial datum $\, (v^0,z^0)\,$ to be fixed independently of all approximations). The latter solutions are known to exists and turn out to be unique for $\, \rho >0$. Moreover, let $\, (v,z)_{\rho,\tau} \,$ and $\, (v,z)_{\rho,\tau,h} \,$ denote the unique incremental solutions to the problem on a given partition with diameter $\, \tau$.

A variety of convergence results for  $\, (v,z)_\rho,\, (v,z)_{\rho,h},\, (v,z)_{\rho,\tau}$, and $\, (v,z)_{\rho,\tau,h} \,$ have already been obtained. This subsection will complement the above discussions and complete the picture of convergence results for the time-continuous evolution problem. In particular, as soon as $\, \nu >0\,$ is fixed, we are entitled to take (possibly joint) limits in $\,(\rho,\tau,h)\,$ as it is graphically depicted in Figure 3 below.
\begin{figure}[htbp]
  \centering
\psfrag{a}{$a$}
\psfrag{b}{$b$}
\psfrag{c}{$c$}
\psfrag{d}{$d$}
\psfrag{e}{$e$}
\psfrag{f}{$f$}
\psfrag{r}{$\rho$}   
\psfrag{t}{$\tau$}
\psfrag{0}{$(0,0,0)$}
\psfrag{h}{$h$}
 \includegraphics[width= 1 \textwidth]{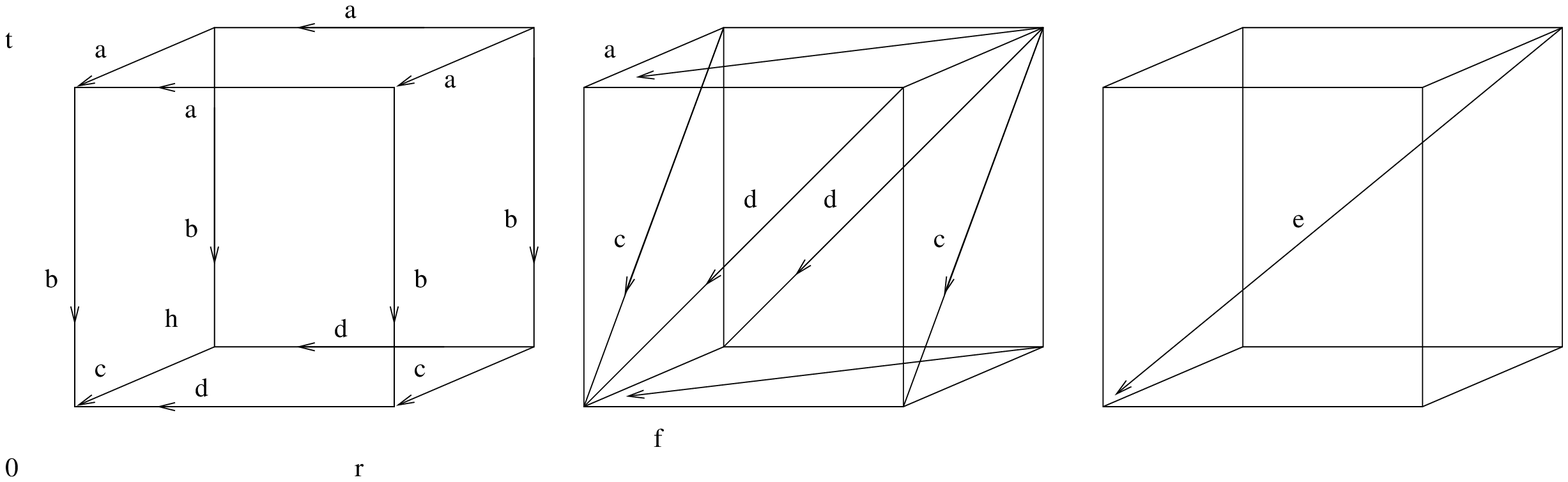}
  \caption{Convergences for the evolution problem $(\nu >0)$}
  \label{fig:3}
\end{figure} 

The main result of this subsection reads as follows.

\begin{theorem}[Convergence for the evolution problem for $\,\boldsymbol \nu \boldsymbol >\boldsymbol 0$]\label{EP}
   Let $\, \nu >0\,$ be fixed and $\, \rho_k \to \rho$, $\, \tau_k \to \tau\geq 0$, and $\, h_k \to h \geq 0 \,$ either being constant of converging to $\,0$. Then, possibly extracting not-relabeled subsequences if $\, (\rho,\tau)=(0,0)$, for all $\, t \in [0,T]$,
$$(v(t),z(t))_{(\rho,\tau,h)_k} \to (v(t),z(t))_{\rho,\tau,h} \quad \text{strongly in} \ \ {\cal Y}^\nu_0.$$
\end{theorem}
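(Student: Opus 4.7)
The plan is to combine three ingredients that have already been set up: the uniform a priori bounds of Lemma \ref{apri} and \eqref{bound99}, the compactness provided by $\nu>0$, and the Mosco convergence $\W_{(\rho,\nu)_k}\to\W_{\rho,\nu}$ of Lemma \ref{gammavu}. These will be plugged into the scheme that drove the existence proof in Theorem \ref{const2}, but now with the three parameters $\rho_k,\tau_k,h_k$ all moving at once.

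First I would obtain uniform bounds. By Lemma \ref{apri} when $\tau_k>0$, or by the energy identity \eqref{EE} together with Gronwall's lemma when $\tau_k=0$, one shows that $\sup_k\sup_{t\in[0,T]} \W_{(\rho,\nu)_k}((v,z)_{(\rho,\tau,h)_k}(t)) + \Diss_\D(z_{(\rho,\tau,h)_k},[0,T])<+\infty$, the bound depending only on the (fixed) initial energy, $\|L\|_{W^{1,1}(0,T;(\Y_0)')}$, and $\|q\|_{W^{1,1}(0,T)}$. Since $\nu>0$ is frozen, the sublevels of $\W_{(\rho,\nu)_k}$ are uniformly bounded in $H^1(\Omega;\Rz^3)\times H^1(\Omega;\Rzd)$, hence weakly precompact in $\Y$, and the extended Helly principle of \cite{Mainik-Mielke05} (used already in Theorem \ref{const2}) delivers a not-relabeled subsequence, a non-decreasing $\phi:[0,T]\to[0,+\infty)$, and a limit $(v,z):[0,T]\to\Y_0$ such that for every $t\in[0,T]$ one has $z_{(\rho,\tau,h)_k}(t)\rightharpoonup z(t)$ in $H^1(\Omega;\Rzd)$, $v_{(\rho,\tau,h)_k}(t)\rightharpoonup v(t)$ in $H^1(\Omega;\Rz^3)$, and $\Diss_\D(z_{(\rho,\tau,h)_k},[0,t])\to\phi(t)$ with $\phi(t)-\phi(s)\geq\Diss_\D(z,[s,t])$.

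Next I would pass to the limit in both the stability condition and the energy balance. For any test pair $(\overline v,\overline z)\in\Y_0$ (admissible under the constraint $|\overline z|\leq c_3$ when $\rho=0$), I build a Mosco recovery sequence $(\overline v_k,\overline z_k)\in\Y_{h_k,0}$ via the projectors $q_{h_k}$ and $r_{h_k}^\nu$; hypothesis \eqref{clem} preserves the constraint, and $\W_{(\rho,\nu)_k}(\overline v_k,\overline z_k)\to\W_{\rho,\nu}(\overline v,\overline z)$ by Lemma \ref{gammavu}. Inserting $(\overline v_k,\overline z_k)$ in the stability condition satisfied by $(v,z)_{(\rho,\tau,h)_k}(\tau^k(t))$, the $\Gamma$-liminf, the continuity of $L(\cdot)$, and the strong $L^1$-continuity of $\D$ yield $(v(t),z(t))\in\S(t)$ in the sense of \eqref{SS}. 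A parallel $\liminf$ passage in the discrete upper energy estimate \eqref{preupper3} (with initial data converging in energy, as already assumed in Section \ref{evoevo}) together with Lebesgue dominated convergence in the load integrals delivers the upper energy inequality, while the lower one follows from the just-established stability via the Riemann-sum argument of Theorem \ref{const2}. Hence $\phi(t)=\Diss_\D(z,[0,t])$ and $(v,z)$ is an energetic solution to \eqref{SS}--\eqref{EE}.

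Finally I would upgrade weak to strong convergence and identify the limit. Testing the stability of $(v,z)_{(\rho,\tau,h)_k}(t)$ against a recovery sequence for the limit $(v(t),z(t))$, in the same spirit as \eqref{lim}, gives $\limsup_k\W_{(\rho,\nu)_k}((v,z)_{(\rho,\tau,h)_k}(t))\leq\W_{\rho,\nu}(v(t),z(t))$; combined with the Mosco $\liminf$ this forces convergence of the energies, and the uniform convexity of $\W_{\rho,\nu}$ then promotes the weak convergence in $\Y_0$ to strong convergence in ${\cal Y}^\nu_0$. Whenever $(\rho,\tau)\neq(0,0)$ the limit problem admits a unique energetic solution (by Theorem \ref{con_dep_lemma} in case $\rho>0$, or by the discussion of Section \ref{evoevo} in case $\tau>0$), so the whole family converges and no subsequence extraction is needed; only in the degenerate case $(\rho,\tau)=(0,0)$ is the subsequence genuinely required. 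The main obstacle is the \emph{joint} construction of the recovery sequence in the stability step: one has to produce a single $(\overline v_k,\overline z_k)\in\Y_{h_k,0}$ that simultaneously preserves the Dirichlet boundary data, respects the constraint $|\cdot|\leq c_3$ when $\rho=0$, and converges in $\W_{(\rho,\nu)_k}$-energy — reconciling exactly hypothesis \eqref{clem}, the boundedness properties of $q_h,r_h^\nu$, and the Mosco convergence of Lemma \ref{gammavu}.
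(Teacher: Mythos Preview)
Your proposal is correct and follows essentially the same route as the paper's own sketch: uniform a priori bounds plus $\nu>0$-compactness, Helly-type extraction, stability in the limit via the recovery sequence $(q_{h_k}(\overline v),r_{h_k}^\nu(\overline z))$ combined with the Mosco convergence of Lemma~\ref{gammavu}, upper energy via $\liminf$ in the discrete estimate, and the lower estimate from stability. The paper organizes the argument by the six edge-types $a$--$f$ of Figure~\ref{fig:3} and treats the genuinely new joint limit $e$ exactly as you do; your presentation simply unifies all cases into a single pass and spells out the strong-convergence upgrade and the uniqueness dichotomy more explicitly than the paper's sketch.
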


\begin{proof}[Sketch of the proof.] Referring to Figure 3, let us start by observing that the limits of type $\, a\,$ and $\, b\,$ were already obtained in Theorem \ref{CIP} and Theorem \ref{const2}, respectively. Moreover, the limits of type $\, c \,$ have been discussed at the end of Section \ref{evoevo}.

{\it Limits of type d.} This limits can be established by simply adapting to the current three-dimensional situation the argument of Theorem \ref{CCR}. In case $\, h>0$, the latter adaptation is even simplified by finite-dimensionality and the convergence result would hold for $\, \nu =0 \,$ as well.

{\it The limit e.} By suitably extracting (not-relabeled) subsequences we readily find $\, (v,z): [0,T]\to {\cal Y}^\nu_0\,$ such that, for all $\, t \in [0,T]$, 
\begin{eqnarray}
&&(v(t),z(t))_{(\rho,\tau,h)_k} \to (v(t),z(t)) \quad \text{weakly in}\ \  {\cal Y}^\nu_0,\nonumber\\
&& z_{(\rho,\tau,h)_k}(t)\to z(t) \quad \text{strongly in}\ \ L^2(\Omega;\Rzd).
\end{eqnarray}
Hence, we are left to prove that indeed $\, (v,z)\,$ is a solution of the evolution problem, i.e., check for the stability condition \eqref{S} and the energy equality \eqref{E}.

As for the former, we exploit Lemma \ref{gammavu} and, for all $\, (\overline v, \overline z) \in {\cal Y}^\nu_0$, by letting $\, (\overline v, \overline z)_k := (q_{h_k}(\overline v), r_{h_k}^\nu(\overline z))\,$ we check that 
\begin{gather}
  \W_{0,\nu}(v(t),z(t)) - \lan L(t),(v(t),z(t))\ran \nonumber\\
\leq \liminf_{k\rightarrow +\infty}\Big( \W_{\rho_k,\nu}((v(t_{\tau_k}),z(t_{\tau_k}))_{(\rho,\tau,h)_k}) - \lan L(t_{\tau_k}),((v(t_{\tau_k}),z(t_{\tau_k}))_{(\rho,\tau,h)_k}) \Big)\nonumber\\
\leq \liminf_{k\rightarrow +\infty}\Big( \W_{\rho_k,\nu}((\overline v, \overline z)_k) - \lan L(t_{\tau_k}),((\overline v, \overline z)_k) +\D(\overline z_h - z_{(\rho,\tau,h)_k})\Big)\nonumber\\
= \W_{0,\nu}(\overline v, \overline z) - \lan L(t),(\overline v, \overline z) \ran + \D(\overline z - z(t))\nonumber
\end{gather}
where we used some obvious notation for the point $\, t_{\tau_k}\,$ on the time-partition of diameter $\, \tau_k \,$ such that $\, 0 \leq t -t_{\tau_k}  < \tau_k$, Lemma \ref{gammavu}, the stability of $\, (v,z)_{(\rho,\tau,h)_k}\,$ at time $\, t_{\tau_k}$, and the strong continuity of $\, \D \,$ in $\, L^2(\Omega;\Rzd)$.

The upper energy estimate (and hence \eqref{E}) follows by simply passing to the $\, \liminf\,$ as $\, (\rho,\tau,h)_k \to (0,0,0)\,$ in the discrete upper energy estimate \eqref{preupper2}.
 
{\it The limit f.} This limit can be obtained along the same lines of limit $\, e\,$ above, the argument being even simplified by the fact that here $\, \tau_k=0\,$ and the upper energy estimate follows by passing to the $\,\liminf\,$ as $\, (\rho,h)_k \to (0,0)\,$ in the time-continuous upper energy estimate \eqref{upper3}.
\end{proof}

\def\cprime{$'$} \def\cprime{$'$}

%\bibliography{../bibliogr.bib}

%%%%%%%%%%%%%%%%%%%%%%%%%%%%%%%%%
%% end document
%%%%%%%%%%%%%%%%%%%%%%%%%%%%%%%%%

\end{document}